\newcommand{\isomto}{\overset{\sim}{\rightarrow}}
\newcommand{\CC}{\mathbb{C}}
\newcommand{\FF}{\mathbb{F}}
\newcommand{\ZZ}{\mathbb{Z}}
\newcommand{\QQ}{\mathbb{Q}}
\newcommand{\cO}{\mathcal{O}}
\newcommand{\fm}{\mathfrak{m}}
\newcommand{\fW}{\mathfrak{W}}
\newcommand{\cW}{\mathcal{W}}
\newcommand{\cK}{\mathcal{K}}
\newcommand{\cZ}{\mathcal{Z}}
\newcommand{\zlb}{\overline{\mathbb{Z}_{\ell}}}
\newcommand{\flb}{\overline{\mathbb{F}_{\ell}}}
\newcommand{\cKb}{\overline{\cK}}
\DeclareMathOperator{\Ind}{Ind}
\DeclareMathOperator{\cInd}{c-Ind}
\DeclareMathOperator{\End}{End}
\DeclareMathOperator{\Rep}{Rep}
\DeclareMathOperator{\Frac}{Frac}
\DeclareMathOperator{\nil}{nil}
\DeclareMathOperator{\scs}{scs}
\DeclareMathOperator{\gen}{gen}
\newtheorem{thm}{Theorem}[section]
\newtheorem{lemma}[thm]{Lemma}
\newtheorem{prop}[thm]{Proposition}
\newtheorem{cor}[thm]{Corollary}
\newtheorem{defn}[thm]{Definition}
\newtheorem{ex}[thm]{Example}
\newtheorem{rmk}[thm]{Remark}
\newtheorem{question}[thm]{Question}
\newtheorem*{thm*}{Theorem}
\begin{document}

\title[Characterizing mod-$\ell$ local Langlands]{Characterizing the mod-$\ell$ local Langlands correspondence by nilpotent gamma factors}
\author{Gilbert Moss}
\date{\today}
\maketitle
\begin{abstract}
Let $F$ be a $p$-adic field and choose $k$ an algebraic closure of $\mathbb{F}_{\ell}$, with $\ell$ different from $p$. We define ``nilpotent lifts'' of irreducible generic $k$-representations of $GL_n(F)$, which take coefficients in Artin local $k$-algebras. We show that an irreducible generic $\ell$-modular representation $\pi$ of $GL_n(F)$ is uniquely determined by its collection of Rankin--Selberg gamma factors $\gamma(\pi\times \widetilde{\tau},X,\psi)$ as $\widetilde{\tau}$ varies over nilpotent lifts of irreducible generic $k$-representations $\tau$ of $GL_t(F)$ for $t=1,\dots, \lfloor \frac{n}{2}\rfloor$. This gives a characterization of the mod-$\ell$ local Langlands correspondence in terms of gamma factors, assuming it can be extended to a surjective local Langlands correspondence on nilpotent lifts.
\end{abstract}

\section{Introduction}

\subsection{Notation}
Let $F$ be a finite extension of $\QQ_p$ with finite residue field of order $q$, let $G:=G_n:=GL_n(F)$, let $R$ be a commutative ring with unit, and let $\Rep_R(G_n)$ be the category of smooth $R[G_n]$-modules (the stabilizer of any element is open). Let $\ell$ be a prime different from $p$, let $k:=\flb$, and let $W(k)$ be the ring of Witt vectors (it is the $\ell$-adic completion of the ring of integers in the maximal unramified extension of $\QQ_{\ell}$). Let $\cK = \Frac(W(k))= W(k)[\frac{1}{\ell}]$, let $\overline{\cK}$ be an algebraic closure, and let $\cO$ be the ring of integers in $\overline{\cK}$. After fixing a field isomorphism $\cKb\cong \CC$ we may translate the properties of $\Rep_{\CC}(G_n)$ to $\Rep_{\cKb}(G_n)$.

Fix a nontrival additive character $\psi:F\to W(k)^{\times}$. For any $W(k)$-algebra $W(k)\to R$ we denote by $\psi_R$ the extension $F\to W(k)^{\times}\to R^{\times}$. Extend $\psi_R$ to the subgroup $U$ of unipotent upper triangular matrices by $\psi(u)=\psi(u_{1,2}+\cdots+u_{n-1,n})$. Given a $W(k)$-algebra $R$, we say $\pi\in \Rep_R(G_n)$ is \emph{generic} if there exists a nontrivial homomorphism 
$$\pi\to \Ind_U^G\psi_R:=\left\{W:G\to R\text{ smooth}:W(ug)=\psi_R(u)W(g),\ u\in U,\  g\in G\right\}.$$ If $R$ is an algebraically closed field of characteristic different from $p$, $\mathcal{A}_{R}^{\gen}(n)$ will denote the set of isomorphism classes of irreducible generic objects in $\Rep_R(G)$. Given $\pi\in \mathcal{A}_R^{\gen}(n)$, the map $\pi\to \Ind_U^G\psi_R$ is unique up to scaling (\cite[III.1.11]{vig}); its image is denoted $\cW(\pi,\psi)$ and called the \emph{Whittaker model} of $\pi$, and $\pi\cong\cW(\pi,\psi)$ by irreducibility.

\subsection{On the mod-$\ell$ converse theorem}
Given integers $n$, $t\geq 1$, and $\pi\in \mathcal{A}_{\cKb}^{\gen}(n)$, $\tau\in \mathcal{A}_{\cKb}^{\gen}(t)$, Jacquet, Piatetski-Shapiro, and Shalika in \cite{jps1} construct gamma factors of pairs $\gamma(\pi\times\tau,X,\psi)\in\cKb(X)$ satisfying a certain functional equation. If $\pi_1$, $\pi_2$ are isomorphic, then $\gamma(\pi_1\times\tau,X,\psi)=\gamma(\pi_2\times \tau,X,\psi)$ for all $\tau\in \mathcal{A}_{\cKb}^{\gen}(t)$, for all $t\geq 1$. A ``local converse theorem'' identifies a collection of representations $\tau$ such that the converse statement holds, i.e. such that the collection of $\gamma(\pi\times\tau,X,\psi)$ uniquely determines $\pi\in \mathcal{A}_{\cKb}^{\gen}(n)$. The first converse theorems appear in \cite{jacquet_langlands, jps2} for $n=2$ and $3$, respectively; in both cases $\tau$ runs over characters of $G_1$. In \cite{hen_converse} Henniart proves a converse theorem for arbitrary $n$, where $\tau$ runs over irreducible generic objects in $\Rep_{\cKb}(G_t)$ for $t=1,2,\dots,n-1$. 

In \cite{hen_converse}, Henniart applies the converse theorem to prove that the local Langlands correspondence over $\cKb$ is uniquely characterized by the property that it equates the Deligne--Langlands gamma factors (\cite{deligne72}) with the Jacquet--Piatetski-Shapiro--Shalika gamma factors (\cite{jps2}). Vign\'{e}ras proves in \cite{vig_ll} the existence of a local Langlands correspondence over $k$. It is uniquely characterized not by gamma factors but by its compatibility with the $\cKb$-correspondence under reduction mod-$\ell$ (see \S~\ref{characterizinglocallanglands} below for more details). 

The goal of this article is to prove a mod-$\ell$ converse theorem and use it to uniquely characterize the mod-$\ell$ local Langlands correspondence intrinsically by mod-$\ell$ gamma factors, without referring to the $\cKb$-setting. We will now describe our mod-$\ell$ converse theorem, and postpone until \S~\ref{characterizinglocallanglands} the question of characterizing the mod-$\ell$ local Langlands correspondence.

The construction of gamma factors associated to objects in $\mathcal{A}_R^{\gen}(n)$ is already established in the literature when $R=k=\flb$. Gamma factors of pairs are developed over $\flb$ by Kurinczuk--Matringe in \cite{km}, and shown to be compatible with the reduction mod-$\ell$ of the gamma factors of \cite{jps2}. At first, one might hope for a mod-$\ell$ converse theorem that parallels the situation over $\cKb$: that the gamma factors $\gamma(\pi\times\tau,X,\psi)$ uniquely characterize $\pi\in\mathcal{A}_{k}^{\gen}(n)$ as $\tau$ ranges over $\mathcal{A}^{\gen}_k(t)$ for $t=1,\dots,n-1$. However, we give a counterexample in Section~\ref{counterexample} when $n=2$ showing this hope is false. Thus a new framework is needed to achieve a mod-$\ell$ converse theorem. 

When examining the counterexample of Section~\ref{counterexample}, one finds that the lack of nontrivial $\ell$-power roots of unity in $k$ leads to a lack of tamely ramified $k$-valued characters. This leads to an excess of congruences mod $\ell$ between gamma factors $\gamma(\pi_{\theta}\times \chi,X,\psi)$ as $\chi$ varies, where $\pi_{\theta}$ is a certain integral object in $\mathcal{A}_{\overline{\cK}}^{\gen}(2)$ constructed in \S~\ref{counterexample}. The same problem arises when trying to adapt the traditional method of proving converse theorems over $\mathbb{C}$, where a crucial ingredient is the completeness of Whittaker models with respect to the $L^2$-inner product. We give an example in Section~\ref{vanishinglemmasgonewrong} showing that completeness of Whittaker models fails over $k$, again due to the lack of nontrivial $\ell$-power roots of unity in $k$. Their appearence in the mod-$\ell$ Bernstein center (c.f. Example~\ref{lpowerrootsbernstein}) is another example of the importance of $\ell$-power roots of unity to the mod-$\ell$ representation theory.

To recover the converse theorem, we pass from $k$ to a larger class of $k$-algebras that has an ample supply of $\ell$-power roots of unity: Artin local $k$-algebras, or equivalently, finite-dimensional local $k$-algebras. Such rings $R$ have only a single prime ideal (namely, their nilradical) they have residue field $k$, and the composition $k\to R\to k$ is the identity map.

\begin{defn}\label{nilpotentliftdef}
Given an object $\tau\in\mathcal{A}_{k}^{\gen}(t)$, a \emph{nilpotent lift} of $\tau$ to an Artin local $k$-algebra $R$ is an admissible $R[G_t]$-submodule $\widetilde{\tau}$ of $\Ind_U^G\psi_{R}$ for which
$$\widetilde{\tau}\otimes_{R}k=\cW(\tau,\psi).$$

For an Artin local $k$-algebra $R$, let $\mathcal{A}^{\gen}_R(n)$ denote the set of isomorphism classes of nilpotent lifts to $R$ of objects in $\mathcal{A}^{\gen}_{k}(t)$. 

Let $\mathcal{A}^{\gen}_{\nil}(t)$ denote the set of isomorphism classes of all nilpotent lifts of objects in $\mathcal{A}^{\gen}_{k}(t)$, to any $R$.
\end{defn}
\noindent In other words, a nilpotent lift is an infinitesimal deformation of the Whittaker model of $\tau$.

Given $\tau\in \mathcal{A}_k^{\gen}(t)$, the isomorphism $\tau\cong \cW(\tau,\psi)$ allows us to identify $\tau$ with a nilpotent lift of itself, so we have an ``inclusion'' $\mathcal{A}_{k}^{\gen}(t)\subset\mathcal{A}_{\nil}^{\gen}(t)$. An extension of the theory of gamma factors that encompasses nilpotent lifts is carried out in the author's thesis (\cite{moss1,moss2}). We refer to \S~\ref{section:cowhittaker} and \S~\ref{section:gamma} for a summary of these gamma factors. Our main result is the following converse theorem.

\begin{thm}\label{conversemain}
Let $\pi_1$, $\pi_2$ be in $\mathcal{A}_{k}^{\gen}(n)$, $n\geq 2$. Suppose
\begin{equation}\label{equalityofnilpotentgammas}
\gamma(\pi_1\times\widetilde{\tau},X,\psi) = \gamma(\pi_2\times\widetilde{\tau},X,\psi)
\end{equation}
for all nilpotent lifts $\widetilde{\tau}\in \mathcal{A}_{\nil}^{\gen}(t)$, for all $t=1,2,\dots,\lfloor \frac{n}{2}\rfloor$. Then $\pi_1\cong \pi_2$.
\end{thm}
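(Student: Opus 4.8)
The plan is to adapt the classical argument of Jacquet–Piatetski-Shapiro–Shalika and Henniart to the setting of nilpotent coefficients. The overarching strategy of a converse theorem is: from equality of gamma factors, deduce that $\pi_1$ and $\pi_2$ have the same Rankin–Selberg integrals against all test data coming from $\mathcal{A}_{\nil}^{\gen}(t)$ for $t\le \lfloor n/2\rfloor$; then use a ``completeness of Whittaker models'' statement to conclude that the Whittaker functions of $\pi_1$ and $\pi_2$ agree on a large enough subgroup (a mirabolic or $P_n$-orbit) to force $\cW(\pi_1,\psi)=\cW(\pi_2,\psi)$, hence $\pi_1\cong\pi_2$. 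The key new input, which is exactly why nilpotent lifts are introduced, is that over Artin local $k$-algebras one recovers enough $\ell$-power roots of unity — and hence enough ``tamely ramified'' characters and enough test vectors — to make the relevant vanishing/completeness lemma true, whereas Section~\ref{vanishinglemmasgonewrong} shows it fails over $k$ itself.

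First I would reduce the functional equation for $\gamma(\pi_i\times\widetilde\tau,X,\psi)$, as established in \S~\ref{section:gamma}, to the statement that the two Rankin–Selberg zeta integrals $\Psi(W_{\pi_1},W'_{\widetilde\tau},X)$ and $\Psi(W_{\pi_2},W'_{\widetilde\tau},X)$ have the same functional-equation behavior for every Whittaker function $W'$ of every nilpotent lift $\widetilde\tau$ of every $\tau\in\mathcal{A}^{\gen}_k(t)$, $t\le\lfloor n/2\rfloor$. The point of allowing arbitrary Artin local $k$-algebra coefficients is that, given $\tau\in\mathcal{A}_k^{\gen}(t)$ and any finite set of Whittaker functions of $\tau$ one wishes to ``perturb,'' one can realize the perturbation inside $\Ind_U^G\psi_R$ for a suitable $R=k[x]/(x^m)$ (or a product of such); twisting by $k[x]/(x^m)^\times$-valued characters that reduce to the trivial character mod the maximal ideal produces genuinely new test data not visible over $k$. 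I would make this precise by showing that the $R$-span, over all nilpotent lifts, of the restrictions of Whittaker functions to the mirabolic subgroup $P_n(F)$ (equivalently, the Kirillov-type model) is large enough: this is the mod-$\ell$ analogue of the density of $C_c^\infty(N_{n-1}\backslash P_{n-1})$ or of Jacquet–Shalika-type completeness.

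The core of the argument is then the usual inductive/bootstrapping step: equality of all the twisted zeta integrals for $t\le \lfloor n/2\rfloor$ yields, by the JPSS ``multiplicativity'' and the standard manipulation with the Weyl element $w_{n,t}$, that $W_{\pi_1}$ and $W_{\pi_2}$ agree on $P_n(F)$; since a Whittaker function on $GL_n$ that vanishes on $P_n(F)$ is zero (this is a statement about the structure of $\Ind_U^G\psi$ restricted to $P_n$, valid over $k$), one gets $W_{\pi_1}=W_{\pi_2}$ after normalizing both to take value $1$ at the identity, hence $\cW(\pi_1,\psi)=\cW(\pi_2,\psi)$ as submodules of $\Ind_U^{G}\psi_k$, and therefore $\pi_1\cong\pi_2$ by irreducibility. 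I would organize this by first proving the $t\le n-1$ version (which is the direct transcription of Henniart's method, with nilpotent test data replacing the $\cKb$-test data), and then invoking the ``derivative'' or ``$(n,t)$ vs $(n,n-t)$'' symmetry of the gamma factor — the mod-$\ell$ functional equation relating $\gamma(\pi\times\widetilde\tau)$ for $\tau$ on $GL_t$ and the contragredient computation on $GL_{n-t}$ — to cut the range down from $n-1$ to $\lfloor n/2\rfloor$, exactly as in the characteristic-zero improvement of the converse theorem.

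The main obstacle I anticipate is establishing the correct mod-$\ell$ vanishing/completeness lemma with nilpotent coefficients: over $k$ one cannot separate Whittaker functions using twists by characters (too few roots of unity), and the whole subtlety of the paper is that passing to Artin local $k$-algebras repairs this. Concretely, the delicate point is to show that for each $\tau\in\mathcal{A}^{\gen}_k(t)$ and each function $\phi\in C_c^\infty(F^{\times})$ (or its higher-rank analogue) that one needs to integrate against, there is a nilpotent lift $\widetilde\tau$ and a Whittaker vector in it whose Rankin–Selberg integral reproduces that $\phi$ up to elements that are controlled by the induction hypothesis — i.e. that the nilpotent lifts generate ``enough'' test functionals. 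A secondary technical point is bookkeeping across different coefficient rings: the $\gamma$-factor of $\pi_i\times\widetilde\tau$ lives in $R(X)$ for varying Artin local $k$-algebras $R$, and one must check that the equalities \eqref{equalityofnilpotentgammas} for all such $R$ really do assemble into the single statement about $k$-valued Whittaker functions that the inductive step consumes; this should follow from the compatibility of gamma factors with base change along $k$-algebra maps, which I would cite from \S~\ref{section:gamma}.
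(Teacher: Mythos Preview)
Your high-level architecture is right and matches the paper: the whole proof reduces to a single new ingredient, a completeness-of-Whittaker-models statement valid once one allows test data from nilpotent lifts (this is Theorem~\ref{vanishingthm}), after which the paper simply invokes \cite{LM} verbatim, replacing their completeness theorem (valid over reduced $\ell$-torsion-free rings) by Theorem~\ref{vanishingthm}. So your long outline of the JPSS/Henniart zeta-integral manipulations and the $\lfloor n/2\rfloor$ reduction is not something the paper redoes; it just cites \cite{LM}. (As an aside, the $\lfloor n/2\rfloor$ bound in \cite{LM}, following Chai and Jacquet--Liu, is not obtained from a simple ``$(n,t)$ vs.\ $(n,n-t)$ symmetry'' of the gamma factor as you suggest; it requires a substantially more delicate argument.)

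The genuine gap in your proposal is the proof of the completeness lemma itself. You correctly flag it as ``the main obstacle,'' but your proposed mechanism---start from a $\tau$ over $k$ and ``perturb'' its Whittaker functions inside $\Ind_U^G\psi_R$ for $R=k[x]/(x^m)$, or twist by $R^\times$-valued characters---does not come with any argument that such perturbations actually \emph{detect} an arbitrary nonzero $H\in\cInd_U^G\psi_R$. The paper's method is different in kind and is the real content of the article. First one pairs $H$ against the \emph{universal} co-Whittaker module $e\fW_t$ over the integral Bernstein center $e\cZ_t$ (Proposition~\ref{universalwhittakerfunctions}, Corollary~\ref{bigvanishinglemmaarbitrary}); this gives a nonzero element $\langle H,W\rangle\in e\cZ_t\otimes_{W(k)}R$. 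Then a purely commutative-algebra lemma (Lemma~\ref{powerofmaximalideal}: in a Noetherian ring, any nonzero element survives in some $S/\fm^i$) produces a $k$-point thickened to an Artin local quotient $S/\fm^i$ in which $\langle H,W\rangle$ remains nonzero. Specializing $e\fW_t$ along $e\cZ_t\to S/\fm^i$ yields the desired nilpotent lift $\widetilde\tau$. In other words, the paper does not build nilpotent lifts by hand; it lets the geometry of $\Spec(e\cZ_t)$ manufacture them, and the crucial point---handling the possibility that $\langle H,W\rangle$ is nilpotent in $e\cZ_t\otimes k$---is exactly what forces one to look at $S/\fm^i$ rather than at closed points. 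Your proposal does not contain this idea.
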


In fact, the theorem is still true if $\pi_1$ and $\pi_2$ are in $\mathcal{A}_{\nil}^{\gen}(n)$ (see Theorem~\ref{conversefull}).

Theorem~\ref{conversemain} addresses a conjecture of Vign\'{e}ras from 2000. In \cite{vig_epsilon}, Vign\'{e}ras defines modified gamma factors for cuspidal objects in $\mathcal{A}^{\gen}_{k}(2)$ and proves they satisfy a mod-$\ell$ converse theorem. In the same article, she conjectures that an $\ell$-modular converse theorem holds for all $n>2$, using some suitably modified version of the gamma factor. This is discussed in more detail in Section~\ref{otherwork}.

When $n>2$ there is another aspect of the converse theorem, which is obtaining the \emph{minimal} range of $t$ such that the collection $\gamma(\pi\times\tau,X,\psi)$ determines $\pi$ as $\tau$ varies. For example, over $\cKb$, Henniart's proof \cite{hen_converse} requires $\mathcal{A}_{\cKb}(t)$ for all $t$ within the range $1\leq t \leq n-1$. When $n=3$, it is shown in \cite{jps2} that $t=1$ suffices, and Jacquet conjectured in 1999 that $1\leq t\leq \lfloor\frac{n}{2}\rfloor$ should suffice for arbitrary $n$. Jacquet's conjecture was proven in \cite{chai_converse} and \cite{jl}, independently, for all $n$. We thus present Theorem~\ref{conversemain} as a mod-$\ell$ analogue of Jacquet's conjecture. It is shown in \cite{sharpness} that the bound $t\leq \lfloor \frac{n}{2}\rfloor$ cannot be improved in the $\overline{\cK}$-setting.

\subsection{Characterizing the mod-$\ell$ Langlands correspondence}\label{characterizinglocallanglands}

Suppose for the time being that $R$ is any algebraically closed field of characteristic $\ell$ different from $p$. An irreducible representation in $\Rep_R(G_n)$ is \emph{supercuspidal} (respectively, \emph{cuspidal}) if it is not isomorphic to a subquotient (respectively, quotient) of any parabolic induction from a proper Levi subgroup (the two notions are equivalent when the characteristic of $R$ is zero). Let $\mathcal{A}^0_R(n)$ denote the set of isomorphism classes of irreducible supercuspidal representations in $\Rep_{R}(G_n)$. Given $\pi\in \Rep_R(G_n)$ irreducible, the \emph{supercuspidal support} of $\pi$, denoted $\scs(\pi)$, is the unique multiset $\{\pi_1,\dots,\pi_k\}$ such that $\pi$ is a subquotient of the normalized parabolic induction $\pi_1\times\cdots\times\pi_k$.  If $\mathcal{A}_R^{\gen}(n)$ denotes the set of irreducible generic representations in $\Rep_R(G_n)$, and $\mathcal{A}_R^{\scs}(n)$ denotes the set of supercuspidal supports of irreducible representations in $\Rep_{R}(G_n)$, the map $\tau\mapsto \scs(\tau)$ defines a bijection $\mathcal{A}_R^{\gen}(n)\to \mathcal{A}_R^{\scs}(n)$ (\cite[III.1.11]{vig}). 

Choose an algebraic closure of $F$, let $W_F$ be the absolute Weil group, let $\mathcal{G}_R^0(n)$ denote the set of isomorphism classes of $n$-dimensional irreducible smooth $R$-representations of $W_F$. A local Langlands correspondence over $R$ starts with a sequence of bijections $L_{R,n}^0:\mathcal{G}_{R}^0(n)\to \mathcal{A}_{R}^0(n)$, $n=1,2,\dots$. If $\mathcal{G}_R^{ss}(n)$ denotes the set of isomorphism classes of semisimple $n$-dimensional $R$-representations of $W_F$, a \emph{semisimple} local Langlands correspondence over $R$ is a sequence $(L_{R,n}^{ss})_{n\geq 1}$ of bijections $L_{R,n}^{ss}:\mathcal{G}_{R}^{ss}(n)\to\mathcal{A}_{R}^{\scs}(n)$ extending a sequence $L_{R,n}^0:\mathcal{G}_R^0(n)\to \mathcal{A}_R^0(n)$ according to the relation
$$L_{R,n}^{ss}(\rho_1\oplus \cdots \oplus\rho_k) = \{L_{R,n}^0(\rho_1),\dots,L_{R,n}^0(\rho_k)\}.$$
Composing with the inverse of the bijection $\mathcal{A}_R^{\gen}(n)\isomto \mathcal{A}_R^{\scs}(n)$ gives a \emph{generic} local Langlands correspondence 
$$L^{\gen}_{R,n}:\mathcal{G}^{ss}_R(n)\to \mathcal{A}_R^{\gen}(n).$$

In \cite{harris_taylor} (and \cite{lrs} for positive characteristic $F$), it is shown for $R=\cKb$ that there exists a sequence $L_{\cKb,n}^0:\mathcal{G}_{\cKb}^0(n)\to \mathcal{A}_{\cKb}^0(n)$ satisfying:
\begin{enumerate}[label=(\roman*)]
\item $L_{\cKb,1}^0$ is given by local class field theory,
\item $L_{\cKb,n}^0(\rho^{\vee})=L_{\cKb,n}^0(\rho)^{\vee}$, and 
\item for all pairs of integers $n,t\geq 1$, $\rho\in\mathcal{G}_{\cKb}^0(n)$, $\rho'\in \mathcal{G}_{\cKb}^0(t)$, $$\gamma(L_{\cKb,n}^0(\rho)\times L_{\cKb,t}^0(\rho),s,\psi)=\gamma(\rho\otimes\rho',s,\psi).$$ 
\end{enumerate}
where $\gamma(\rho\otimes\rho',s,\psi)$ denotes the Deligne--Langlands local factor of \cite{deligne72}. Since the Deligne--Langlands gamma factor is multiplicative in direct sums, and the Jacquet--Piatetski-Shapiro--Shalika gamma factor is multiplicative in normalized parabolic inductions, the generic Langlands correspondence $L_{\cKb,n}^{\gen}:\mathcal{G}_{\cKb}^{ss}(n)\to \mathcal{A}_{\cKb}^{\gen}(n)$ induced by $(L_{\cKb,n}^0)_{n\geq 1}$ satisfies:
\begin{enumerate}[label=(\roman*)]
\item $L_{\cKb,1}^{\gen}$ is given by local class field theory on $\mathcal{G}_{\cKb}^0(1)$ ($=\mathcal{G}_{\cKb}^{ss}(1)$),
\item for all pairs $n>t\geq 1$, $\rho\in\mathcal{G}_F^{ss}(n)$, $\rho'\in \mathcal{G}_{\cKb}^{ss}(t)$, $$\gamma(L_{\cKb,n}^{\gen}(\rho)\times L_{\cKb,t}^{\gen}(\rho'),s,\psi)=\gamma(\rho\otimes\rho',s,\psi),$$ 
\end{enumerate}
It then follows immediately from (ii) and the converse theorem of \cite{hen_converse} that there can be at most one sequence $(L_{\cKb,n}^{\gen})_{n\geq 1}$ with these properties.

For $\ell>0$ a prime, $\ell\neq p$, Vign\'{e}ras proves in \cite{vig_ll} the existence of a sequence of bijections $L_{k,n}^0:\mathcal{G}_{k}^0(n) \to \mathcal{A}_{k}^0(n)$, for all $n\geq 1$, which is uniquely characterized by the property that its induced semisimple correspondence $(L_{k,n}^{ss})_{n\geq 1}$ is compatible with $(L_{\cKb,n}^0)_{n\geq 1}$ under reduction mod-$\ell$. More precisely, a finite length object $\pi$ in $\Rep_{\cKb}(G_n)$ (or $\Rep_{\cKb}(W_F)$) is \emph{integral} if it admits a model $L$ over the ring of integers $\cO_E$ in a finite extension $E/\mathbb{Q}_{\ell}$ that is free over $\cO_E$ and finite over $\cO_E[G]$; write $r_{\ell}(\pi)$ to denote the semisimplified mod-$\ell$ reduction $(L\otimes_{\cO_E}k)^{ss}$. The reduction $r_{\ell}(\pi)$ of $\pi\in\mathcal{A}_{\cKb}^0(G_n)$ remains irreducible and cuspidal but may no longer be \emph{super}cuspidal, and the reduction $r_{\ell}(\rho)$ of $\rho\in\mathcal{G}_{\cKb}^0(n)$ may no longer be irreducible. Compatibility with $(L_{\cKb,n}^0)_{n\geq 1}$ under reduction mod-$\ell$ means that, for $\rho,\rho'\in\mathcal{G}^0_{\cKb}$ integral, \begin{itemize}
\item $L_{\cKb,n}^0(\rho)$ is integral
\item $r_{\ell}(L_{\cKb,n}^0(\rho)) =r_{\ell}(L_{\cKb,n}^0(\rho'))\iff r_{\ell}(\rho) = r_{\ell}(\rho')$, and
\item $L_{k,n}^{ss}(r_{\ell}(\rho)) =  \scs(r_{\ell}(L_{\cKb,n}^0(\rho)))$.
\end{itemize}
The uniqueness of the mod-$\ell$ correspondence $(L_{k,n}^{ss})_{n\geq 1}$ (and hence $(L_{k,n}^{\gen})_{n\geq 1}$) then follows from the uniqueness of $(L_{\cKb,n}^{ss})_{n\geq 1}$. 

Our goal is to use Theorem~\ref{conversemain} to characterize $(L_{k,n}^{\gen})_{n\geq 1}$ directly, in analogy with the characterization of $(L_{\cKb,n}^{\gen})_{n\geq 1}$. To accomplish this, we define nilpotent lifts of objects in $\mathcal{G}_{k}^{ss}$: 
\begin{defn}
Given $\rho\in \mathcal{G}_{k}^{ss}(n)$, a \emph{nilpotent lift} of $\rho$ to an Artin local $k$-algebra $R$ is a smooth $R[W_F]$-module $\widetilde{\rho}$, free of rank $n$ over $R$, such that there exists an isomorphism $$\widetilde{\rho}\otimes_Rk\cong \rho.$$ 

For an Artin local $k$-algebra $R$, let $\mathcal{G}_R(n)$ denote the set of isomorphism classes of nilpotent lifts to $R$ of objects in $\mathcal{G}_{k}^{ss}(n)$. 

Let $\mathcal{G}_{\nil}(n)$ denote the set of all isomorphism classes of nilpotent lifts of objects in $\mathcal{G}_k^{ss}(n)$, to any $R$. 
\end{defn}
\noindent In other words, a nilpotent lift is an infinitesimal deformation. We may identify $\mathcal{G}_{k}^{ss}(n)$ with a subset of $\mathcal{G}_{\nil}(n)$, and of $\mathcal{G}_R(n)$ for any $R$.

The theory of gamma factors of objects in $\mathcal{G}_{k}^{ss}(n)$ has long been established by work of Deligne \cite{deligne72}, but the classical theory does not accommodate nilpotent lifts. Fortunately, by previous work of the author and Helm (\cite{galois_gamma}), the Deligne--Langlands gamma factor generalizes to arbitrary Noetherian $W(k)$-algebras (see Theorem~\ref{thm:local galois gamma} for a summary). 

Our second main result characterizes the sequence $(L_{k,n}^{\gen})_{n\geq 1}$ by gamma factors, so far as it can be extended to a correspondence on nilpotent lifts.

\begin{thm}\label{characterization}
There exists \textbf{at most one} sequence of maps $$L_{k,n}^{\gen}:\mathcal{G}_{k}^{ss}(n)\to \mathcal{A}^{\gen}_{k}(n),\ \ n\geq 1$$ that admits, for every Artin local $k$-algebra $R$, an extension to a sequence of surjections $$L_{R,n}:\mathcal{G}_{R}(n)\to \mathcal{A}_{R}^{\gen}(n),\ \ n\geq 1$$ satisfying
\begin{enumerate}
\item $L_{R,1}$ is given by local class field theory,
\item For all $n> t$, $\rho\in \mathcal{G}_{R}(n)$, $\rho'\in\mathcal{G}_{R'}(t)$, we have the following equality in $(R\otimes_k R')[[X]][X^{-1}]$:
\begin{equation*}\label{gammaequalitynil:intro}
\gamma(\rho\otimes\rho',X,\psi) = \gamma(L_{R,n}(\rho)\times L_{R',t}(\rho'),X,\psi)
\end{equation*}
\end{enumerate}
\end{thm}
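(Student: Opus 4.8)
The plan is to derive Theorem~\ref{characterization} from the converse theorem (Theorem~\ref{conversemain}), or rather its extension to nilpotent lifts (Theorem~\ref{conversefull}), by a uniqueness argument that is entirely parallel to the classical argument sketched for $(L_{\cKb,n}^{\gen})_{n\geq 1}$ in \S\ref{characterizinglocallanglands}, but carried out one nilpotent coefficient ring at a time. Suppose $(L_{k,n}^{\gen})_{n\geq 1}$ and $(L_{k,n}^{\prime\gen})_{n\geq 1}$ are two sequences each admitting, for every Artin local $k$-algebra $R$, extensions $(L_{R,n})$ and $(L'_{R,n})$ satisfying (1) and (2). We want $L_{k,n}^{\gen}=L_{k,n}^{\prime\gen}$ for all $n$; since these are the special case $R=k$ of the extended correspondences, it suffices to show $L_{R,n}=L'_{R,n}$ for every Artin local $k$-algebra $R$ and every $n$, and we do this by induction on $n$. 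The base case $n=1$ is immediate from property~(1): both $L_{R,1}$ and $L'_{R,1}$ are local class field theory.

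For the inductive step, fix $R$ and $n\geq 2$ and assume $L_{R',t}=L'_{R',t}$ for all Artin local $k$-algebras $R'$ and all $t<n$; in particular, for all $t\leq \lfloor n/2\rfloor$. Let $\rho\in\mathcal{G}_R(n)$ and set $\pi_1:=L_{R,n}(\rho)$, $\pi_2:=L'_{R,n}(\rho)$, both in $\mathcal{A}_R^{\gen}(n)$. For any $t\leq\lfloor n/2\rfloor<n$, any Artin local $k$-algebra $R'$, and any $\rho'\in\mathcal{G}_{R'}(t)$, property~(2) applied to each of the two correspondences gives, in $(R\otimes_k R')[[X]][X^{-1}]$,
\begin{equation*}
\gamma(\pi_1\times L_{R',t}(\rho'),X,\psi)=\gamma(\rho\otimes\rho',X,\psi)=\gamma(\pi_2\times L'_{R',t}(\rho'),X,\psi),
\end{equation*}
and by the inductive hypothesis $L_{R',t}(\rho')=L'_{R',t}(\rho')$, so in fact $\gamma(\pi_1\times\widetilde\tau,X,\psi)=\gamma(\pi_2\times\widetilde\tau,X,\psi)$ for $\widetilde\tau:=L_{R',t}(\rho')$. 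Now the crucial point is that, as $R'$ ranges over Artin local $k$-algebras and $\rho'$ over $\mathcal{G}_{R'}(t)$, the representations $L_{R',t}(\rho')$ exhaust all of $\mathcal{A}_{\nil}^{\gen}(t)$: this is precisely where the \emph{surjectivity} hypothesis on each $L_{R',t}$ is used. (Here one notes that $\mathcal{A}_R^{\gen}(t)$ in the sense of Definition~\ref{nilpotentliftdef} — nilpotent lifts to $R$ — is the target of $L_{R,t}$.) Hence equation~\eqref{equalityofnilpotentgammas} holds for all $\widetilde\tau\in\mathcal{A}_{\nil}^{\gen}(t)$, all $t=1,\dots,\lfloor n/2\rfloor$, and Theorem~\ref{conversefull} gives $\pi_1\cong\pi_2$, i.e. $L_{R,n}(\rho)=L'_{R,n}(\rho)$. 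Since $\rho\in\mathcal{G}_R(n)$ was arbitrary, $L_{R,n}=L'_{R,n}$, completing the induction.

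The one point requiring care — and the step I expect to be the main obstacle in writing this out cleanly — is the compatibility of the gamma factor appearing in property~(2) with the notion of "nilpotent lift" used in Theorem~\ref{conversefull}: one must check that $\gamma(\pi_i\times L_{R',t}(\rho'),X,\psi)$, a priori an element of $(R\otimes_k R')[[X]][X^{-1}]$, is the gamma factor of the pair $(\pi_i,\widetilde\tau)$ in the sense of the theory summarized in \S\ref{section:gamma} when $\widetilde\tau=L_{R',t}(\rho')$ is viewed as an object of $\mathcal{A}_{\nil}^{\gen}(t)$, and that equality of these elements is what the hypothesis of Theorem~\ref{conversefull} requires. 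This is a matter of unwinding definitions and base-changing along $R\otimes_k R'\to k$ appropriately, together with the observation that $\pi_i\in\mathcal{A}_k^{\gen}(n)\subset\mathcal{A}_{\nil}^{\gen}(n)$ when $R=k$ and more generally lies in $\mathcal{A}_R^{\gen}(n)$; no new ideas beyond bookkeeping are needed. Everything else is formal: the induction is driven entirely by the range $t\leq\lfloor n/2\rfloor$ in Theorem~\ref{conversemain}/\ref{conversefull}, which is exactly why that sharpened range (rather than $t\leq n-1$) is the natural input here, and by the surjectivity assumption, which converts "agreement on the image of $L_{R',t}$" into "agreement on all of $\mathcal{A}_{\nil}^{\gen}(t)$."
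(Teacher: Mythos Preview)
Your argument is correct and is essentially the paper's own proof: the paper proves Corollary~\ref{compatibilitywithgammafactors} by exactly this comparison of two putative systems via condition~(2), surjectivity, and Theorem~\ref{conversefull}, then specializes to $R=k$; you have simply made the induction on $n$ explicit (which the paper's displayed equality $\gamma(\tilde L_{R,n}(\rho)\times L_{R',t}(\rho'),X,\psi)=\gamma(\rho\otimes\rho',X,\psi)$ tacitly requires). Your final paragraph of caveats is unnecessary: the gamma factor in condition~(2) is by definition the Rankin--Selberg factor of \S\ref{section:gamma}, so no base-change bookkeeping is needed to invoke Theorem~\ref{conversefull}.
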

\noindent In fact, if all of the extensions $(L_{R,n})_{n\geq 1}$ exist, each one is unique (Corollary~\ref{compatibilitywithgammafactors}).

There remains the question of whether the sequence of bijections $(L_{k,n}^{\gen})_{n\geq 1}$ of Vign\'{e}ras actually admits the surjective extensions $L_{R,n}$ satisfying the conditions of Theorem~\ref{characterization}. The existence of the extensions satisfying conditions (1) and (2) of Theorem~\ref{characterization} follows from the machinery of the local Langlands correspondence ``in families," as it is stated in \cite{converse}, however its surjectivity is not immediate. This will be addressed in future work.

\subsection{Relation to other work and further questions}\label{otherwork}
Recently, the proof of Jacquet's conjecture was extended to the setting of reduced \emph{$\ell$-torsion free} $W(k)$-algebras in the preprint \cite{LM} of Liu and the author. As the counterexample in Section~\ref{counterexample} of the present article shows, the converse theorem in the $\ell$-torsion setting requires a different framework than \cite{LM}. 

In Section~\ref{vanishinglemmasgonewrong} we show that the point of failure in the traditional converse theorem proof method is the completeness of Whittaker models over $k$. We recover completeness of Whittaker models by including nilpotent lifts and using the geometry of the integral Bernstein variety, as developed in \cite{h_bern,h_whitt}.

The gamma factors used in Theorem~\ref{conversemain} are different from the ``new'' gamma factors proposed by Vign\'{e}ras in \cite{vig_epsilon}. Vign\'{e}ras' new gamma factors take coefficients in $k$, and to each cuspidal $\tau$ in $\mathcal{A}_k^{\gen}(2)$, $\chi$ in $\mathcal{A}_k^{\gen}(1)$, there is attached a collection of factors $\varepsilon(\tau\otimes\chi,y)$ which determine $\tau$ as $\chi$ and $y$ are allowed to vary ($y$ is an element of $\cO_F^{\times}$ of $\ell$-power order).

Given $\pi\in\mathcal{A}_k^{\gen}(n)$, $\tau\in\mathcal{A}_k^{\gen}(t)$, we speculate that there might be a way to construct a collection of gamma factors ${\gamma_1(\pi\times \tau,X,\psi),\gamma_2(\pi\times\tau,X,\psi),\dots}\in k(X)$ for which the mod-$\ell$ converse theorem holds, for which there is an analogous collection for $\mathcal{G}_k^{ss}(n)$, and for which there is an equality of sets 
\begin{align*}\{\gamma_1(L_{k,n}^{\gen}(\rho)\times L_{k,t}^{\gen}(\rho'),X,\psi), \gamma_2(L_{k,n}^{\gen}(\rho)\times L_{k,t}^{\gen}(\rho'),X,\psi),\dots\} \\=\{\gamma_1(\rho\otimes\rho',X,\psi), \gamma_2(\rho\otimes\rho',X,\psi),\dots\}.\end{align*} This would more closely resemble Vign\'{e}ras' construction of ``new'' gamma factors for $G_2$ in \cite{vig_epsilon}, and would eliminate the need for nilpotent lifts in Theorem~\ref{characterization}. It seems plausible that the set $\gamma_1(\pi\times \tau,X,\psi),\gamma_2(\pi\times\tau,X,\psi),\dots\in k(X)$ could be formed by taking appropriate linear combinations of the collection of $\gamma(\pi\times\widetilde{\tau},X,\psi)$ as $\widetilde{\tau}$ varies over nilpotent lifts of $\tau$, and mapping them to $k(X)$ in a clever way.

In the recent preprint \cite{km_ll}, Kurinczuk and Matringe demonstrate the failure of preservation of $L$-factors of pairs in $L_{k,n}^{\gen}$. We remark that the putative correspondence $L_{R,n}$ appearing in Theorem~\ref{characterization} would preserve gamma factors of pairs (and hence so would $L_{k,n}^{\gen}$), where the gamma factors are those of \cite{galois_gamma}.

The reason for assuming $F$ has characteristic zero is this paper's dependence on the results of \cite{h_bern,h_whitt,converse}, where the same assumption is made but almost certainly not required.

\subsection{Acknowledgements}
The ideas appearing here were influenced by several discussions with David Helm during and after the author's work on his PhD thesis. The interest and encouragement of Rob Kurinczuk and Nadir Matringe were invaluable. The author would also like to thank Jean-Fran\c{c}ois Dat, Guy Henniart, Marie-France Vign\'{e}ras, and an anonymous referee for their helpful comments at various stages.

\section{A counterexample to the naive mod-$\ell$ converse theorem}\label{counterexample}
In this section we give an example of two irreducible generic $k$-representations of $G_2:=GL_2(F)$ with distinct mod-$\ell$ supercuspidal supports (in fact, in different $\ell$-blocks), having the same gamma factors for all twists by characters. 

When writing \cite{vig_epsilon}, Vign\'{e}ras was no doubt aware of an example similar to the one presented below, but it does not appear in the literature. We also note that our counterexample is different from that in \cite{minguez}, where M\'{i}nguez gives two distinct irreducible $k$-represntations with the same Godement--Jacquet gamma factors, but one is nongeneric and they have the same supercuspidal support. 

In this section, we use the gamma factors as constructed in \cite{km}. Given $\pi$, $\tau$ in $\mathcal{A}_{k}^{\gen}(n)$, choose $\widetilde{\pi}$, $\widetilde{\tau}$ subrepresentations of $\Ind_U^G\psi_{\cO}$ lifting the Whittaker models of $\pi$, $\tau$, respectively. The classical gamma factor of \cite{jps2}, $\gamma(\widetilde{\pi}\otimes\cKb\times \widetilde{\tau}\otimes\cKb, X, \psi)$, defines a formal Laurent series with coefficients in $\cO$. Then $\gamma(\pi\times\tau,X,\psi)$ is defined as the reduction of $\gamma(\widetilde{\pi}\times\widetilde{\tau},X,\psi)$ modulo the maximal ideal of $\cO$. It is uniquely determined by a functional equation (c.f. \cite[Cor 3.11]{km}). Later, we will require gamma factors in a broader context, see \S~\ref{section:gamma}.

Let $q=p=5$, $\ell = 2$, and $n=2$. Let $\theta: \FF_{q^2}^{\times} \to \zlb^{\times}$ be the character that sends a primitive $24$'th root of unity to $\zeta_3$, a primitive $3$rd root of unity in $\zlb$. Since $\theta^q \neq \theta$, this is a regular character of $\FF_{q^2}^{\times}$, and therefore gives rise to an irreducible cuspidal representation $\lambda_{\theta}$ of $GL_2(\FF)$ (\cite[\S 6.4]{bh}). Let $K^0 = GL_2(\cO_F)$. Inflate $\lambda_{\theta}$ to a representation of $K^0$, then $\lambda_{\theta}$ has trivial central character, since $\theta|_{\FF_q^{\times}}$ is trivial (\cite[\S 6.4(1)]{bh}). Thus we may extend $\lambda_{\theta}$ to a representation $\Lambda$ of $F^{\times}K^0$ by declaring that $\Lambda|_{F^{\times}}$ is trivial. The triple $(M_2(\cO_F), F^{\times}K^0, \Lambda)$ is a \emph{cuspidal type} of level zero, in the language of \cite[15.5]{bh}, and the representation $\pi_{\theta}:= \cInd_{F^{\times}K^0}^{G_2}\Lambda$ is an irreducible cuspidal representation of $G_2$. In fact, $\pi_{\theta}$ is integral and, since $\theta^q\not\equiv\theta$ mod $\ell$, its mod-$\ell$ reduction $\overline{\pi_{\theta}}$ is supercuspidal, see \cite[III.3.3]{vig}, or \cite[Thm \S 2.7]{km}. 
 
\begin{lemma}\label{gausssum}
If $\chi:F^{\times}\to \cKb^{\times}$ is an unramified or a tamely ramified character, then $\gamma(\chi\pi_{\theta},X,\psi)\equiv 1$ mod $\ell$. If $\rm{level}(\chi)\geq 1$, then $\gamma(\chi\pi_{\theta},X,\psi)  = \gamma(\chi\circ \det,X,\psi)$.
\end{lemma}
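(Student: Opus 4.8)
The plan is to compute $\gamma(\chi\pi_\theta,X,\psi)$ by first passing to characteristic zero, using the explicit description of $\pi_\theta$ as a compactly induced level-zero cuspidal type, and then reducing mod $\ell$. Since $\gamma(\chi\pi_\theta,X,\psi)$ is by definition the mod-$\ell$ reduction of $\gamma(\chi\widetilde{\pi_\theta}\otimes\overline{\cK}\times\widetilde\chi\otimes\overline{\cK},X,\psi)$ for an integral lift, and since $\pi_\theta$ itself is already defined over $\cO$ (it is $\cInd_{F^\times K^0}^{G_2}\Lambda$), the first step is to invoke the known formula for the Godement--Jacquet/Rankin--Selberg gamma factor of a twist of a depth-zero supercuspidal of $GL_2$ by a character $\chi$ of $F^\times$. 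Via the local Langlands correspondence over $\overline{\cK}$, $\pi_\theta$ corresponds to the induced representation $\operatorname{Ind}_{W_E}^{W_F}\widetilde\theta$ of the Weil group, where $E/F$ is the unramified quadratic extension and $\widetilde\theta$ is the character of $W_E$ corresponding to $\theta$ via class field theory (an unramified-by-$\theta$ twist); twisting $\pi_\theta$ by $\chi$ corresponds to twisting by $\chi\circ\det$ on the automorphic side, i.e. to $\operatorname{Ind}_{W_E}^{W_F}(\widetilde\theta\cdot(\chi\circ\mathrm{Nm}_{E/F}))$ on the Galois side. So $\gamma(\chi\pi_\theta,X,\psi)$ equals the Deligne--Langlands gamma factor $\gamma(\operatorname{Ind}_{W_E}^{W_F}(\widetilde\theta\cdot\chi_E),X,\psi)$, which by inductivity of local constants equals $\gamma(\widetilde\theta\cdot\chi_E,X,\psi_E)$ up to the Langlands--Deligne $\lambda$-factor $\lambda(E/F,\psi)$, which for the unramified quadratic extension is a sign independent of everything else.

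The second step is the case analysis on the ramification of $\chi$. When $\chi$ is unramified, $\widetilde\theta\cdot\chi_E$ is a tamely ramified character of $W_E^\times=E^\times$ whose restriction to units is exactly $\theta$ (viewed on $\mathcal{O}_E^\times/(1+\mathfrak{p}_E)=\FF_{q^2}^\times$); its abelian gamma factor is, up to an unramified monomial in $X$ and the $\lambda$-sign, essentially a normalized Gauss sum $g(\theta,\psi_{\FF_{q^2}})/q$. The key arithmetic input is that this Gauss sum has absolute value $q$ in $\overline{\cK}$, so the normalized version is an $\ell$-adic unit, and moreover that $g(\theta,\psi)\equiv -1$, hence $\equiv 1$ mod $\ell=2$ up to sign — here one uses that $\theta$ takes values in the group of cube roots of unity (the $\ell'$-part) and invokes the Hasse--Davenport relation or the congruence $g(\theta,\psi)\equiv -\theta(-1)\pmod{\ell}$ that holds because $\theta$ restricted to $\FF_q^\times$ is trivial; since $\ell=2$, $-1\equiv 1$. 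A tamely ramified $\chi$ behaves the same way: $\widetilde\theta\cdot\chi_E$ is still tame, its restriction to $\mathcal{O}_E^\times$ is $\theta\cdot(\chi\circ\mathrm{Nm})$, and because $\chi$ tame forces $\chi|_{\mathcal{O}_F^\times}$ to have order dividing $q-1$ (prime to $\ell$), the same Gauss-sum congruence yields $\gamma\equiv 1$ mod $\ell$. When $\mathrm{level}(\chi)\geq 1$, the conductor of $\widetilde\theta\cdot\chi_E$ is governed entirely by $\chi_E$ (the depth-zero part $\widetilde\theta$ is negligible against the positive-depth $\chi$), so by the stability of gamma factors under highly ramified twists one gets $\gamma(\chi\pi_\theta,X,\psi)=\gamma(\chi^2\circ\mathrm{Nm}\ \text{pieces})$ collapsing to $\gamma(\chi\circ\det,X,\psi)$ exactly — this is the standard "stability of $\gamma$-factors" phenomenon of Jacquet--Shalika, valid already integrally.

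The main obstacle I expect is the precise Gauss-sum congruence: one must check carefully that the normalized Gauss sum $g(\theta,\psi_{\FF_{q^2}})/q$ lies in $\cO^\times$ and reduces to $\pm 1$, and that the accumulated signs ($\lambda(E/F,\psi)$, $\theta(-1)$, the sign in the functional equation normalization) all become trivial mod $2$. All of these signs are $\pm 1$, hence $\equiv 1\pmod{2}$, so the characteristic $\ell=2$ is doing essential work and the argument would need modification for odd $\ell$; this is consistent with the paper only claiming the statement for this specific $(q,\ell)=(5,2)$. A secondary point to get right is that twisting the compactly-induced type $\cInd_{F^\times K^0}^{G_2}\Lambda$ by $\chi$ of level $\geq 1$ does not remain a level-zero type, so one genuinely needs the Galois-side computation (or equivalently the Rankin--Selberg integral stability result) rather than a purely type-theoretic argument. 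Once the characteristic-zero identities are pinned down, reduction mod $\ell$ is immediate from the definition of $\gamma(\pi\times\tau,X,\psi)$ recalled at the start of this section.
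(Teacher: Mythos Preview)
Your route differs from the paper's: you pass to the Galois side via the $\overline{\cK}$-local Langlands correspondence and invoke inductivity of Deligne--Langlands constants, whereas the paper stays entirely on the automorphic side, quoting the explicit epsilon-factor formula for a depth-zero supercuspidal from \cite[\S 25.4]{bh} to obtain $\varepsilon(\chi\pi_\theta,X,\psi)=-q\,\tau(\theta,\tilde\psi)$ directly, with $\tilde\psi(x)=\psi(x+x^q)$. Both approaches land on the same Gauss sum, but the paper then does a two-line computation rather than citing a congruence: using the substitution $x\mapsto x^q$ and the accident $\theta^q=\theta^{-1}$ (since $\theta^3=1$ and $q=5$), one gets $\tau(\theta,\tilde\psi)=\tau(\theta^{-1},\tilde\psi)$, hence $\tau(\theta,\tilde\psi)^2=\tau(\theta,\tilde\psi)\tau(\theta^{-1},\tilde\psi)=25\,\theta(-1)=25$, so $\varepsilon=\mp 25\equiv 1\pmod 2$. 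Your asserted congruence ``$g(\theta,\psi)\equiv -\theta(-1)\pmod\ell$'' is not standard and is doing the work that this short direct computation does in the paper.

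There is a genuine error in your tamely ramified case: you write that $\chi|_{\cO_F^\times}$ has order dividing $q-1$ ``(prime to $\ell$)'', but here $q-1=4=\ell^2$ is a \emph{power} of $\ell$, not prime to it. This is not a typo you can ignore---it is exactly the mechanism the paper uses: because $q-1$ is an $\ell$-power, any tame $\chi$ satisfies $\chi|_{U_F}\equiv 1\pmod\ell$, so $\chi\equiv\chi'\pmod\ell$ for some unramified $\chi'$, and by compatibility of gamma factors with reduction one is immediately back in the unramified case. Your sentence ``the same Gauss-sum congruence yields $\gamma\equiv 1$'' for tame $\chi$ is therefore unjustified as written; the Gauss sum $g(\theta\cdot\chi_E,\tilde\psi)$ genuinely differs from $g(\theta,\tilde\psi)$ over $\overline{\cK}$, and you need the mod-$\ell$ collapse of $\chi_E|_{\cO_E^\times}$ (again an $\ell$-power-order phenomenon) to reduce to the previous case. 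For $\mathrm{level}(\chi)\geq 1$ both you and the paper invoke stability; the paper's version (\cite[25.7]{bh}) uses that $\pi_\theta$ has trivial central character, which you should make explicit since it is what forces the answer to be $\gamma(\chi\circ\det,X,\psi)$ on the nose rather than a twist of it.
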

\begin{proof}
If $\chi$ is tamely ramified, $\chi|_{U_F}$ is trivial mod-$\ell$, so there exists an unramified character $\chi'$ such that $\chi\equiv \chi'$ mod $\ell$. Hence $\chi\pi_{\theta}\equiv \chi'\pi_{\theta}$ mod $\ell$, and $\gamma(\chi\pi_{\theta},X,\psi) \equiv \gamma(\chi'\pi_{\theta},X,\psi)$ mod $\ell$ (\cite[Thm 3.13(2)]{km}). Thus we may assume without loss of generality that $\chi$ is unramified.

Since $\chi\pi_{\theta}$ is cuspidal, $\gamma(\chi\pi_{\theta},X,\psi) = \varepsilon(\chi\pi_{\theta},X,\psi)$. Since $\chi$ is unramified, the cuspidal type of $\chi\pi_{\theta}$ is the triple $(M_2(\cO_F), F^{\times}K^0, \chi\Lambda)$, and the restrictions $\chi\Lambda|_{K^0}$ and $\Lambda|_{K^0}$ are both equivalent to $\lambda_{\theta}$. It follows from the description in \cite[Section 25.4]{bh} that $\varepsilon(\chi\pi_{\theta},X,\psi)$ and $\varepsilon(\pi_{\theta},X,\psi)$ are equivalent and given by $-q\tau(\theta,\tilde{\psi})$, where $\tilde{\psi}(x) := \psi(x+x^q)$, $x\in \FF_{q^2}$, and $\tau(\theta,\tilde{\psi})$ is the Gauss sum $\sum_{x\in \FF_{25}^{\times}}\theta(x)\psi(x+x^q)$.

Since $x\mapsto x^q$ is a field automorphism, we have
\begin{align*}
\tau(\theta,\tilde{\psi}) &= \sum_{x\in \FF_{25}^{\times}}\theta(x^q)\psi(x^q+(x^q)^q)\\
&=\sum_{x\in \FF_{25}^{\times}}\theta(x)^q\psi(x+x^q)\\
&=\sum_{x\in \FF_{25}^{\times}}\theta(x)^2\psi(x+x^q)=\tau(\theta^{-1},\tilde{\psi}).
\end{align*}
Therefore, $\tau(\theta,\tilde{\psi})^2 = \tau(\theta,\tilde{\psi})\tau(\theta^{-1},\tilde{\psi})$ which we can compute as
$$
\sum_{x,y\in \FF_{25}^{\times}}\theta(xy^{-1})\tilde{\psi}(x+y)
= \sum_{u\in \FF_{25}^{\times}}\theta(u)\sum_{y\in \FF_{25}^{\times}}\tilde{\psi}(y(u+1)).
$$
Separating terms into $u=-1$ and $u\neq -1$,
\begin{align*}
\theta(-1)(25-1) + \sum_{u\neq -1}\theta(u)\sum_{y\in \FF_{25}^{\times}}\tilde{\psi}(y(u+1))&= \theta(-1)(25-1) + \sum_{u\neq -1}-\theta(u)\\
&= \theta(-1)(25) - \sum_{u\in \FF_{25}^{\times}}\theta(u)\\
&=\theta(-1)(25).
\end{align*}
It follows that $\varepsilon(\pi_{\theta},X,\psi)$ is congruent to $1$ mod $\ell$.

Since the central character of $\pi_{\theta}$ is trivial, the result for ramified characters of level $\geq 1$ follows immediately from the stability of gamma factors. For example, we can use the explicit formulation in \cite[25.7]{bh}, after observing that $\varepsilon(\chi\circ\det,X,\psi) = \gamma(\chi\circ\det,X,\psi)$ for characters of level $\geq 1$ (c.f. \cite[26.6 Prop]{bh}).
\end{proof}

Let $B$ denote the subgroup of upper triangular matrices of $G_2$, and $\textbf{1}:B\to k^{\times}$ the trivial character. The ``special" representation $\rm{Sp}_2$ is the cuspidal $k$-representation of $G_2$ that occurs as a subquotient of $i_B^G(\textbf{1})$ (note that normalized and non-normalized parabolic induction coincide since $q\equiv 1$ mod $\ell$). As $\rm{Sp_2}$ is a subquotient of the induction of an unramified character of the torus, its supercuspidal support is distinct (in fact, inertially inequivalent) from $\overline{\pi_{\theta}}$.

\begin{lemma}\label{Lfactor}
If $\bar{\chi}:F^{\times}\to k^{\times}$ is an unramified or tamely ramified character, $\gamma(\bar{\chi}\rm{Sp}_2,X,\psi)\equiv 1$ mod $\ell$. If $\rm{level}(\bar{\chi})\geq 1$, then $\gamma(\bar{\chi}\rm{Sp}_2,X,\psi) = \gamma(\bar{\chi}\circ \det,X,\psi)$. 
\end{lemma}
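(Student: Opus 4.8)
The plan is to mirror the proof of Lemma~\ref{gausssum} as closely as possible, replacing Gauss sum computations with direct $L$- and $\varepsilon$-factor formulas for the Steinberg representation. First I would reduce to the unramified case exactly as before: if $\bar\chi$ is tamely ramified then $\bar\chi|_{U_F}\equiv 1$ mod $\ell$, so $\bar\chi\equiv\bar\chi'$ mod $\ell$ for some unramified $\bar\chi'$, and $\bar\chi\,\mathrm{Sp}_2\equiv\bar\chi'\,\mathrm{Sp}_2$, whence the gamma factors agree by \cite[Thm 3.13(2)]{km}. So it suffices to treat $\bar\chi$ unramified.

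Next, for unramified $\bar\chi$ I would lift: choose an unramified character $\chi:F^\times\to\cKb^\times$ reducing to $\bar\chi$, so that $\chi\,\mathrm{St}_2$ is an integral object of $\mathcal{A}_{\cKb}^{\gen}(2)$ reducing to $\bar\chi\,\mathrm{Sp}_2$, and $\gamma(\bar\chi\,\mathrm{Sp}_2,X,\psi)$ is the mod-$\ell$ reduction of $\gamma(\chi\,\mathrm{St}_2,X,\psi)$. Over $\cKb$ the Steinberg is not cuspidal, but its $\gamma$-factor is classical: writing $\nu=|\det|$, one has $\mathrm{St}_2\hookrightarrow \nu^{1/2}\times\nu^{-1/2}$, so $\gamma(\chi\,\mathrm{St}_2,X,\psi)=\gamma(\chi\nu^{1/2},X,\psi)\gamma(\chi\nu^{-1/2},X,\psi)$ by multiplicativity of gamma factors in parabolic induction. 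For $\chi$ unramified, each factor on the right is of the form $\gamma(\text{unramified character},X,\psi)$, which is a monomial in $X$ times a ratio $\frac{1-q^{a}X}{1-q^{b}X^{-1}}$ (up to the usual normalization conventions). Crucially $q\equiv 1$ mod $\ell$ here (as $\ell=2$ and $q=5$), so every power of $q$ reduces to $1$ mod $\ell$; after the shift by $\nu^{\pm 1/2}$ the two monomials/ratios are inverse to each other mod $\ell$, and their product reduces to $1$. This gives $\gamma(\bar\chi\,\mathrm{Sp}_2,X,\psi)\equiv 1$. I would double-check the bookkeeping of the half-integral twists and the $\varepsilon$-versus-$\gamma$ normalization, since that is where sign and monomial errors creep in; an alternative is to invoke that $\mathrm{Sp}_2$ and the trivial-type unramified principal series subquotients share the same inertial support and quote the level-zero $\varepsilon$-factor description of \cite{bh} directly.

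For the ramified case $\mathrm{level}(\bar\chi)\geq 1$, the central character of $\mathrm{Sp}_2$ is trivial (it is a subquotient of $i_B^G(\mathbf 1)$), so I would again lift to $\chi$ of level $\geq 1$ with trivial-central-character $\chi\,\mathrm{St}_2$ and apply stability of gamma factors: for sufficiently ramified $\chi$, $\gamma(\chi\,\mathrm{St}_2,X,\psi)$ depends only on the central character and $\chi$, and equals $\gamma(\chi\circ\det,X,\psi)=\gamma(\chi\nu^{1/2},X,\psi)\gamma(\chi\nu^{-1/2},X,\psi)$ — indeed for level $\geq 1$, $\chi\nu^{\pm 1/2}$ has the same gamma factor as $\chi$, and one checks this matches $\gamma(\chi,X,\psi)^2$, which is exactly $\gamma(\chi\circ\det,X,\psi)$ for $GL_2$. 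Reducing mod $\ell$ (using \cite[26.6 Prop]{bh} to identify $\varepsilon(\bar\chi\circ\det,X,\psi)=\gamma(\bar\chi\circ\det,X,\psi)$ for level $\geq 1$, as in the previous lemma) yields $\gamma(\bar\chi\,\mathrm{Sp}_2,X,\psi)=\gamma(\bar\chi\circ\det,X,\psi)$.

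The main obstacle is not conceptual but the careful tracking of normalizations: which convention for $\gamma$ (Tate's $\gamma$, the JPSS $\gamma$, normalized vs.\ unnormalized induction), the half-integral twist $\nu^{\pm 1/2}$ (well-defined since we only need its gamma factor, which is again unramified), and confirming that "$q\equiv 1$ mod $\ell$" genuinely collapses the unramified $L$-factors to $1$ in the product. Since the target representation here lives over $k$ with $q\equiv 1$, a cleaner route that sidesteps half-integral subtleties is to compute $\gamma(\bar\chi\,\mathrm{Sp}_2,X,\psi)$ over $k$ directly from the functional equation of \cite{km}: the Whittaker function of $\mathrm{Sp}_2$ restricted to the mirabolic is explicit, and the resulting local integral is a geometric series in $q X$ which, mod $\ell$, is transparently inverse to the dual integral. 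I would present whichever of these is shortest, but structure the writeup to parallel Lemma~\ref{gausssum} so the reader sees the two cuspidal $k$-representations $\overline{\pi_\theta}$ and $\mathrm{Sp}_2$ have identical gamma factors for all character twists despite having inertially inequivalent supercuspidal supports.
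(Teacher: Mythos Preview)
Your approach is correct but takes a longer route than the paper. The paper applies multiplicativity of gamma factors \emph{directly over $k$}: since $q\equiv 1$ mod $\ell$, normalized and unnormalized parabolic induction coincide, so $\mathrm{Sp}_2$ is the generic subquotient of $i_B^G(\mathbf{1})$ and one gets in one line
\[
\gamma(\bar\chi\,\mathrm{Sp}_2,X,\psi)=\gamma(\bar\chi\circ\det,X,\psi)=\gamma(i_B^G\bar\chi,X,\psi)=\gamma(\bar\chi,X,\psi)^2
\]
for \emph{all} $\bar\chi$. This already proves the level $\geq 1$ statement without invoking stability at all; the paper only adds the remark (citing \cite{minguez}) that $\gamma(\bar\chi\circ\det,X,\psi)=\varepsilon(\bar\chi\circ\det,X,\psi)$ in that range. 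For the unramified case, the paper then lifts only the $GL_1$ factor $\gamma(\bar\chi,X,\psi)$ to $\cKb$ and computes it explicitly to be $\equiv -1$ mod $\ell$, so the square is $1$. This avoids the Steinberg lift, the embedding into $\nu^{1/2}\times\nu^{-1/2}$, and all half-integral bookkeeping that you flagged as the main obstacle.

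Your version reaches the same destination, but two small points: (i) the reduction ``tamely ramified $\Rightarrow$ unramified'' is vacuous here, since $q-1$ is a power of $\ell$ and there are no nontrivially tame $k^\times$-valued characters; (ii) your claim that the two factors $\gamma(\chi\nu^{\pm 1/2},X,\psi)$ are ``inverse to each other mod $\ell$'' is not quite the mechanism --- mod $\ell$ both reduce to $\gamma(\bar\chi,X,\psi)\equiv -1$, and it is the squaring (and $-1=1$ for $\ell=2$) that gives $1$. The paper's route is both shorter and cleaner because multiplicativity over $k$ collapses the half-integral twist entirely.
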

\begin{proof}
By the multiplicativity of gamma factors, 
\begin{align*}
\gamma(\bar{\chi}\rm{Sp_2},X,\psi)&=\gamma(\bar{\chi}\circ\det,X,\psi)\\
&= \gamma(i_B^G\bar\chi,X,\psi)\\
&= \gamma(\bar{\chi},X,\psi)^2.
\end{align*}
If $\bar{\chi}$ is unramified (which is equivalent to tamely ramified since $q-1$ is a power of $\ell$), we may choose an unramified lift $\chi:F^{\times}\to\cKb^{\times}$ and compute
\begin{align*}
\gamma(\chi,X,\psi) &= \varepsilon(\chi,X,\psi)\frac{L(\chi^{-1},1/(q^{1/2}X))}{L(\chi,q^{-1/2}X)}\\
&= \chi(\varpi)^{-1}X^{-1}\frac{1-\chi(\varpi)q^{-1/2}X}{1-\chi(\varpi)^{-1}q^{-1/2}X^{-1}}\\
&= \frac{\chi(\varpi)^{-1}X^{-1}-q^{-1/2}}{1-\chi(\varpi)^{-1}q^{-1/2}X^{-1}},
\end{align*}
which is equivalent to $-1$ mod $\ell$, since $q\equiv 1$ mod $\ell$.

It remains to show that $\gamma(\bar{\chi}\circ \det,X,\psi) = \varepsilon(\bar{\chi}\circ\det,X,\psi)$ for all $\chi$ of level $\geq 1$. This is shown in \cite[\S 6]{minguez}.
\end{proof}

\begin{cor}
For every character $\bar{\chi}:F^{\times}\to k^{\times}$, $\gamma(\bar{\chi}\overline{\pi_{\theta}},X,\psi)=\gamma(\bar{\chi}\rm{Sp}_2,X,\psi)$, but $\overline{\pi_{\theta}}$ and $\rm{Sp}_2$ have distinct supercuspidal supports (in fact, inertially distinct).
\end{cor}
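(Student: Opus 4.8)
The plan is to read off both assertions of the corollary directly from Lemmas~\ref{gausssum} and~\ref{Lfactor}; the only real work is the bookkeeping relating the $k$-valued gamma factors to the mod-$\ell$ reductions of the characteristic-zero gamma factors that those two lemmas actually compute.

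For the equality of gamma factors I would split on the level of $\bar\chi\colon F^\times\to k^\times$. If $\bar\chi$ is unramified or tamely ramified, then its restriction to $\cO_F^\times$ factors through $\cO_F^\times/(1+\fp_F)\cong\FF_q^\times$, a cyclic group of order $q-1=4$; since $\flb^\times$ has no nontrivial $\ell$-power roots of unity, this restriction is trivial, so $\bar\chi$ is in fact unramified. Choose an unramified lift $\chi\colon F^\times\to\cKb^\times$. By the definition of the mod-$\ell$ gamma factor recalled at the beginning of this section, together with its compatibility with reduction (\cite[Thm 3.13(2)]{km}), $\gamma(\bar\chi\,\overline{\pi_{\theta}},X,\psi)$ is the reduction mod $\ell$ of $\gamma(\chi\pi_{\theta},X,\psi)$, which is $\equiv 1$ by Lemma~\ref{gausssum}; and $\gamma(\bar\chi\,\rm{Sp}_2,X,\psi)\equiv 1$ by Lemma~\ref{Lfactor}. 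Hence the two agree. If instead $\rm{level}(\bar\chi)\geq 1$, the second assertions of Lemmas~\ref{gausssum} and~\ref{Lfactor} give $\gamma(\bar\chi\,\overline{\pi_{\theta}},X,\psi)=\gamma(\bar\chi\circ\det,X,\psi)=\gamma(\bar\chi\,\rm{Sp}_2,X,\psi)$. So the equality holds for every $\bar\chi$.

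For the supercuspidal supports: as recalled just before Lemma~\ref{gausssum}, $\overline{\pi_{\theta}}$ is itself supercuspidal (because $\theta^q\not\equiv\theta$ mod $\ell$), so $\scs(\overline{\pi_{\theta}})=\{\overline{\pi_{\theta}}\}$ consists of a single supercuspidal representation of $GL_2(F)$, whereas $\rm{Sp}_2$ is a subquotient of $i_B^G(\textbf{1})$, so $\scs(\rm{Sp}_2)$ is a pair of characters of $GL_1(F)$. A multiset of two $GL_1$-representations cannot equal a multiset consisting of one $GL_2$-representation, so the two supercuspidal supports differ; moreover their inertial classes are supported on different Levi subgroups, so $\overline{\pi_{\theta}}$ and $\rm{Sp}_2$ lie in distinct $\ell$-blocks. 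I do not anticipate a genuine obstacle here, since everything is formal once the two lemmas are in hand; the one point deserving care is the reduction step, i.e.\ confirming that $\gamma(\bar\chi\,\overline{\pi_{\theta}},X,\psi)$ really is the mod-$\ell$ reduction of the characteristic-zero gamma factor attached to an unramified twist of $\pi_{\theta}$.
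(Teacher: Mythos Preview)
Your proposal is correct and matches the paper's approach: the corollary is stated there without proof, as it is immediate from Lemmas~\ref{gausssum} and~\ref{Lfactor} together with the fact (recorded just before Lemma~\ref{Lfactor}) that $\mathrm{Sp}_2$ has supercuspidal support on the torus while $\overline{\pi_{\theta}}$ is supercuspidal. The only small point to tighten is the level $\geq 1$ case: Lemma~\ref{gausssum} is stated for $\chi:F^{\times}\to\cKb^{\times}$, so you should explicitly lift $\bar\chi$ to a character $\chi$ of the same level (possible since $1+\fp_F$ is pro-$p$) and then reduce, exactly as you already do in the tame case.
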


\begin{rmk}
There are analogues of the local converse theorem for representations of $GL_n(\mathbb{F}_q)$ over $\mathbb{C}$ (\cite{finite_field_jacquet}). Granted the extension of the theory of $GL_n(\mathbb{F}_q)$ gamma factors to $k$-representations, the Gauss sum calculations in this example could be adapted to illustrate the failure of the naive mod-$\ell$ converse theorem over finite fields.
\end{rmk}
\begin{rmk}
Finding similar examples when $\ell\neq 2$ appears to be nontrivial. It could be interesting to describe the ``gamma-factor $\ell$-blocks,'' i.e. the sets of inertial supercuspidal supports whose twisted gamma factors become equivalent upon reduction modulo $\ell$.
\end{rmk}

The lack of tamely ramified $k$-valued characters was important in establishing the congruences in Lemmas~\ref{gausssum}, \ref{Lfactor}. Let $\cO_0$ be the sub-$W(k)$-algebra of $\zlb$ generated by the values of a tamely ramified $\chi$, which sends a primitive $24$'th root of unity in $\FF_{25}^{\times}$ to a primitive $4$'th root of unity. In particular, $\cO_0$ is a obtained from $W(k)$ by adjoining a $4$'th root of unity, or equivalently a $4$'th root of $\ell$, and therefore $R=\cO_0/\ell\cO_0$ is isomorphic to the four-dimensional local $k$-algebra $k[Y]/Y^4$, and $\zeta:=Y+1$ is a fourth root of unity such that $\zeta^2\neq 1$. Now let $\overline{\chi}$ be the reduction mod $\ell\cO_0$ of $\chi$, \emph{instead} of the reduction modulo the maximal ideal of $\cO_0$. Now $\overline{\chi}$ is a nilpotent lift of the reduction of $\chi$ modulo the maximal ideal of $\cO_0$. With the notation of \S~\ref{counterexample}, $\gamma(\overline{\chi}\overline{\pi_{\theta}},X,\psi)$ is given by the reduction mod $\ell$ of $-q\tau(\chi_E\theta,\tilde{\psi})$, where $\chi_E(x):=\chi(x^{q+1})$ on $\FF_{25}^{\times}$. The same calculation as in the proof of Lemma~\ref{gausssum} gives $\gamma(\overline{\chi}\overline{\pi_{\theta}},X,\psi)^2 = \overline{\chi}(-1)=\zeta^2\neq 1$, where we consider $-1$ as an element of $\FF_{25}^{\times}$. On the other hand, the same calculation as in the proof of Lemma~\ref{Lfactor} shows that $\gamma(\overline{\chi}\text{Sp}_2,X,\psi)$ is the reduction of $\tau(\chi_E\theta,\tilde{\psi})^2=\chi(-1)$ (\cite[23.6.2]{bh}). Thus, $\gamma(\overline{\chi}\overline{\pi_{\theta}},X,\psi)^2= \gamma(\overline{\chi}\text{Sp}_2,X,\psi) = \zeta^2$. It follows that $\gamma(\overline{\chi}\overline{\pi_{\theta}},X,\psi)\neq \gamma(\overline{\chi}\text{Sp}_2,X,\psi)$. This illustrates, for our particular example, how finite-dimensional $k$-algebras are large enough to distinguish twisted gamma factors in characteristic $\ell$.

\section{Co-Whittaker modules and the integral Bernstein center}
\label{section:cowhittaker}

To recover the converse theorem in characteristic $\ell$, we will pass to $R$-coefficients, where $R$ is an Artin local $k$-algebra. For this we need the theory of co-Whittaker $R[G_n]$-modules, where $R$ is a $W(k)$-algebra. 

If $V$ is a smooth $R[G_n]$-module, define $V^{(n)}$ to be the $\psi$-coinvariants $V/V(U_n,\psi)$, where $V(U_n,\psi)$ is the $R$-submodule generated by $\{\psi(u)v-uv:u\in U_n,v\in V\}$. This functor is exact and, for any $R$-module $M$ there is a natural isomorphism 
$$(V\otimes_RM)^{(n)}\cong V^{(n)}\otimes_RM\,.$$ If $V^{(n)}$ is nonzero, Frobenius reciprocity produces a canonical nonzero map 
\begin{equation}\label{canonicalwhittaker}
V\to \Ind_U^G\psi_{V^{(n)}}:v\mapsto W_v,
\end{equation} where $\psi_{V^{(n)}}:=\psi\otimes_{W(k)}V^{(n)}$. The quotient map $V\to V^{(n)}$ is equivalent to $v\mapsto W_v(1)$.

\begin{defn}
Let $R$ be a Noetherian $W(k)$-algebra. A smooth $R[G_n]$-module $V$ is co-Whittaker if the following conditions hold
\begin{enumerate}
\item $V$ is admissible as an $R[G_n]$-module,
\item $V^{(n)}$ is a free $R$-module of rank one,
\item if $Q$ is a quotient of $V$ such that $Q^{(n)}=0$, then $Q=0$.
\end{enumerate}
\end{defn}

For example, when $R=\CC$, $n=2$, $B$ is the Borel subgroup, and $\chi=\chi_1\otimes\chi_2$ is a character of the torus $T$, the normalized parabolic induction $i_B^G(\chi)$ is co-Whittaker so long as $\chi_1\chi_2^{-1}\neq |\cdot |$. 

If $V$ and $V'$ are co-Whittaker $R[G]$-modules, any nonzero $G$-equivariant map $V\to V'$ is surjective, as otherwise the cokernel would be a nongeneric quotient. In this case $V$ is said to dominate $V'$. We say $V$ and $V'$ are {\it equivalent} if there exists a co-Whittaker $R[G_n]$-module $V''$ dominating both $V$ and $V'$. This is an equivalence relation on isomorphism classes of co-Whittaker modules.

By definition a co-Whittaker module admits an isomorphism $V^{(n)}\cong R$, which induces an isomorphism $\Ind_U^G\psi_{V^{(n)}}\cong \Ind_U^G\psi_R$. The image of $V$ in the composition $$V\to \Ind_U^G\psi_{V^{(n)}}\cong \Ind_U^G\psi_R$$ will be denoted by $\cW(V,\psi)$ and called the ($R$-valued) \emph{Whittaker model of $V$} with respect to $\psi$. It is independent of the choice of isomorphism $V^{(n)}\cong R$.

\begin{lemma}
If $V$ is a co-Whittaker $R[G_n]$-module, its Whittaker model $\cW(V,\psi)$ is an equivalent co-Whittaker $R[G_n]$-module.
\end{lemma}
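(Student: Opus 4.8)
The plan is to verify the three axioms of a co-Whittaker module for $\cW(V,\psi)$ one at a time, and then to exhibit $V$ itself as a common dominator. Fix an isomorphism $V^{(n)}\cong R$ and let $\rho\colon V\twoheadrightarrow\cW(V,\psi)$ be the resulting surjection of $V$ onto the image of $v\mapsto W_v$ inside $\Ind_U^G\psi_R$. Since $\cW(V,\psi)$ is a $G$-stable $R$-submodule of the smooth module $\Ind_U^G\psi_R$, it is a smooth $R[G_n]$-module and $\rho$ is $G$-equivariant.

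For admissibility I would first record that any $G$-equivariant quotient $Q$ of an admissible smooth $R[G_n]$-module is admissible: given a compact open $K\subseteq G_n$, choose an open pro-$p$ subgroup $K'\subseteq K$; because $p$ is invertible in $W(k)$, hence in $R$, the functor of $K'$-invariants is exact on smooth $R[G_n]$-modules, so $V^{K'}\twoheadrightarrow Q^{K'}$, whence $Q^{K'}$ is finitely generated over the Noetherian ring $R$, and therefore so is its $R$-submodule $Q^{K}\subseteq Q^{K'}$. Taking $Q=\cW(V,\psi)$ gives axiom (1).

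For axiom (2), consider $\mathrm{ev}_1\colon\cW(V,\psi)\to R$, $W\mapsto W(1)$. For $u\in U_n$ one has $\mathrm{ev}_1(uW)=W(u)=\psi(u)W(1)$, so $\mathrm{ev}_1$ annihilates $\cW(V,\psi)(U_n,\psi)$ and descends to $\overline{\mathrm{ev}_1}\colon\cW(V,\psi)^{(n)}\to R$. On the other hand, exactness of $(-)^{(n)}$ applied to $\rho$ gives a surjection $V^{(n)}\twoheadrightarrow\cW(V,\psi)^{(n)}$, and by the normalization of \eqref{canonicalwhittaker} the composite $V^{(n)}\twoheadrightarrow\cW(V,\psi)^{(n)}\xrightarrow{\overline{\mathrm{ev}_1}}R$ is the chosen isomorphism $V^{(n)}\cong R$ (both send the class of $v$ to $W_v(1)$). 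Hence $V^{(n)}\to\cW(V,\psi)^{(n)}$ is injective as well, so it is an isomorphism and $\cW(V,\psi)^{(n)}$ is free of rank one. Axiom (3) is immediate: any quotient of $\cW(V,\psi)$ is, via $\rho$, a quotient of $V$, and $V$ is co-Whittaker, so such a quotient with vanishing $(n)$-th derivative is zero.

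For the equivalence, $\rho$ is a nonzero (as $\cW(V,\psi)^{(n)}\neq 0$) $G$-equivariant map between the co-Whittaker modules $V$ and $\cW(V,\psi)$, so $V$ dominates $\cW(V,\psi)$; since $V$ trivially dominates itself, $V''=V$ exhibits $V$ and $\cW(V,\psi)$ as equivalent. The step I expect to need the most care is the identification in axiom (2): one must check that forming $\psi$-coinvariants of the submodule $\cW(V,\psi)\subseteq\Ind_U^G\psi_R$ is compatible, through $\mathrm{ev}_1$, with the quotient map $V\to V^{(n)}$, i.e. that the two a priori distinct $R$-linear maps $V^{(n)}\to R$ produced above coincide.
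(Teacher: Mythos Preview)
Your proof is correct and follows essentially the same route as the paper. The paper's proof is terser: it dispatches axioms (1), (3), and the equivalence in one sentence (``Since $\cW(V,\psi)$ is a quotient of $V$, we need only show $\cW(V,\psi)^{(n)}$ is free of rank one''), and then proves axiom (2) by exactly your argument---the surjection $V^{(n)}\twoheadrightarrow\cW(V,\psi)^{(n)}$ induced by $(-)^{(n)}$ has evaluation at $1$ as a one-sided inverse, so it is an isomorphism. Your explicit treatment of admissibility and of the equivalence is fine, just more detailed than what the paper writes down.
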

\begin{proof}
Since $\cW(V,\psi)$ is a quotient of $V$, we need only show $\cW(V,\psi)^{(n)}$ is free of rank one. Let $\cW$ be the image of $V$ in the map~(\ref{canonicalwhittaker}). Choosing an isomorphism $V^{(n)}\cong R$ induces an isomorphism $\cW\cong \cW(V,\psi)$, so it suffices to prove $\cW^{(n)}\cong V^{(n)}$. But the map $V\to V^{(n)}$ factors as $V\to \cW \to V^{(n)}$, the second map being evalution at 1. This induces a map $\cW^{(n)}\to V^{(n)}$, which is surjective. On the other hand, the natural surjection $V\to \cW$ induces a surjection $V^{(n)}\to \cW^{(n)}$ which is its inverse. 
\end{proof}

In \cite{h_whitt}, Helm constructs a co-Whittaker module which is ``universal'' up to this notion of equivalence. The key tool is the integral Bernstein center of $G_n$, i.e. the center of the category $\Rep_{W(k)}(G_n)$.

The center of an abelian category is the endomorphism ring of the identity functor, in other words the ring of natural transformations from the identity functor to itself. It acts on every object in the category in a way compatible with all morphisms. We denote by $\cZ_n$ the center of $\Rep_{W(k)}(G_n)$. 

Schur's lemma holds for any co-Whittaker $R[G_n]$-module $V$, meaning the natural map $R\to \End_{A[G_n]}(V)$ is an isomorphism (c.f. \cite[Prop 6.2]{h_whitt}), and thus there exists a map $f_V:\cZ_n \to R$, which we call the \emph{supercuspidal support} of $V$. Note that $V$ also admits a central character $\omega_V:F^{\times}\to R^{\times}$. If $R$ is a field and $V, V'$ are objects in $\mathcal{A}^{\gen}_{R}(n)$, then $f_V=f_{V'}$ if and only $\scs(V) = \scs(V')$ in the traditional sense (\cite[12.12]{h_bern}).

A primitive idempotent $e$ of $\cZ_n$ gives rise to a direct factor category $e\Rep_{W(k)}(G_n)$, which is the full subcategory of $\Rep_{W(k)}(G_n)$ on which $e$ acts as the identity. As described in \cite{h_bern}, the primitive idempotents in $\cZ_n$ are in bijection with inertial equivalence classes of pairs $(L,\pi)$, where $L$ is a Levi subgroup of $G_n$ and $\pi$ is an irreducible supercuspidal $k$-representation of $L$.

If $V$ is a simple object in $\Rep_{W(k)}(G_n)$ that is killed by $\ell$, the mod-$\ell$ inertial supercuspidal support is defined to be the usual inertial supercuspidal support of $V$ in $\Rep_k(G_n)$. If $V$ is an integral simple object in $\Rep_{\overline{\cK}}(G_n)$, its mod-$\ell$ reduction may no longer be simple. The mod-$\ell$ inertial supercuspidal support of $V$ is defined by combining the inertial supercuspidal supports of the constituents of its mod-$\ell$ reduction (c.f. \cite[4.12,4.13]{h_bern}).

If $e$ is the idempotent corresponding to the pair $(L,\pi)$, then a representation $V$ in $\Rep_{W(k)}(G_n)$ lies in $e\Rep_{W(k)}(G_n)$ if and only if every simple subquotient of $V$ has mod-$\ell$ inertial supercuspidal support $[L,\pi]$.

If $[M',\pi']$ is an inertial cuspidal support in $\Rep_{\overline{\cK}}(G_n)$, let $\cZ_{M',\pi'}$ denote the center of the subcategory of $\Rep_{\overline{\cK}}(G_n)$ corresponding to $[M',\pi']$.
\begin{thm}[\cite{h_bern}, Thm 10.8, 12.1]\label{characteristiczerosplitting}
Let $e$ be any primitive idempotent of $\cZ_n$, corresponding to a mod-$\ell$ inertial supercuspidal support $[L,\pi]$.
\begin{enumerate} 
\item The ring $e\cZ_n$ is a finitely generated, reduced, $\ell$-torsion free $W(k)$-algebra.
\item There is an isomorphism $$e\cZ\otimes\overline{\cK}\cong \prod_{[M',\pi']}\cZ_{M',\pi'},$$ where $[M',\pi']$ varies over pairs consisting of a Levi $M'\supset M$ and $\pi'$ a cuspidal integral object in $\Rep_{\overline{\cK}}(M')$ with mod-$\ell$ inertial supercuspidal support equivalent to $[L,\pi]$.
\end{enumerate}
\end{thm}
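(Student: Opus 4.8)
This is \cite[Thm.\ 10.8, 12.1]{h_bern}; I outline the shape of the argument. The plan is to replace the block $e\Rep_{W(k)}(G_n)$ by an explicit algebra and to compute with it. First I would invoke the construction of a progenerator: for the primitive idempotent $e$ corresponding to $[L,\pi]$ there is a finitely generated projective generator $P_e$ of $e\Rep_{W(k)}(G_n)$, so the block is equivalent to the category of right modules over $E_e:=\End_{G_n}(P_e)^{\mathrm{op}}$, and, by Morita invariance of the center, $e\cZ_n\cong Z(E_e)$. The crucial input is then the integral theory of types: building on Bushnell--Kutzko and its $W(k)$-coefficient refinements, $P_e$ can be chosen so that $E_e$ is Morita equivalent over $W(k)$ to a tensor product $\bigotimes_i\cH_{q_i}(\widehat{GL}_{m_i})$ of extended affine Hecke algebras of type $A$, whose parameters $q_i$ are powers of the residue cardinality $q$ and hence units in $W(k)$ since $\ell\neq p$.

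Granting that reduction, part (1) becomes a computation inside affine Hecke algebras. Over any commutative base in which the parameter is invertible, the Bernstein presentation identifies the center of $\cH_q(\widehat{GL}_m)$ with a ring of symmetric Laurent polynomials $W(k)[\theta_1^{\pm1},\dots,\theta_m^{\pm1}]^{S_m}$ --- or, correcting for the torsion number of $\pi$, a finite subring of the same shape. Such a ring is a finitely generated $W(k)$-algebra, an integral domain (hence reduced), and free as a $W(k)$-module (hence $\ell$-torsion free). These three properties persist under tensor products over $W(k)$, under passage to the center of such a tensor product, and under Morita equivalence; applying this to $E_e$ gives part (1).

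For part (2) I would base change along $W(k)\to\overline{\cK}$. Since $e\cZ_n$ is a finitely generated flat $W(k)$-algebra over which $E_e$ is module-finite, the formation of the center commutes with this flat base change, so that $e\cZ_n\otimes_{W(k)}\overline{\cK}$ is the center of $e\Rep_{W(k)}(G_n)\otimes_{W(k)}\overline{\cK}$, which decomposes into a finite product of Bernstein blocks of $\Rep_{\overline{\cK}}(G_n)$. It remains to identify which blocks $[M',\pi']$ occur: by Vign\'{e}ras' analysis of the mod-$\ell$ reduction of (super)cuspidal supports, an integral cuspidal $\pi'$ of a Levi $M'$ lands in $e$ exactly when the inertial supercuspidal support of its mod-$\ell$ reduction is $[L,\pi]$, and this forces $M'\supseteq M$ (reduction can only coarsen the support, i.e.\ enlarge the Levi). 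Since the center of the $[M',\pi']$-block of $\Rep_{\overline{\cK}}(G_n)$ is $\cZ_{M',\pi'}$ by definition, this produces the asserted product decomposition.

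The genuinely hard part is the input black-boxed in the first paragraph: that every block over $W(k)$ carries a progenerator whose endomorphism algebra is, up to Morita equivalence, a tensor product of type-$A$ affine Hecke algebras over $W(k)$ with invertible parameters. Over $\CC$ this is Bushnell--Kutzko, but the integral statement demands tracking $\ell$-integrality through the entire construction of simple and semisimple types and their covers, and checking that the relevant intertwining and Hecke-algebra computations descend to $W(k)$; this is the technical heart of \cite{h_bern} and of the type theory it builds on. Everything downstream --- the center computation and the base-change statement --- is by comparison formal.
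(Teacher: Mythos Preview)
The paper does not prove this theorem; it is quoted verbatim as \cite[Thm 10.8, 12.1]{h_bern} and used as a black box. Your outline is therefore not being compared against any argument in the present paper, but against Helm's. As a sketch of Helm's strategy it is broadly accurate: the block is controlled by a progenerator whose endomorphism ring is identified, via integral type theory, with (something built from) affine Hecke algebras of type $A$ over $W(k)$, and the Bernstein presentation then makes the center explicit enough to read off finite generation, reducedness, and $\ell$-torsion freeness; the characteristic-zero splitting follows by base change and the classical Bernstein decomposition. One caution: your phrase ``Morita equivalent over $W(k)$ to a tensor product of affine Hecke algebras'' is slightly stronger than what Helm actually proves in general---the comparison with Hecke algebras in \cite{h_bern} is more delicate (it passes through parabolic induction from a supercuspidal block and an analysis of the resulting endomorphism ring, and Morita equivalence in the clean sense you describe is not asserted for every block). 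The conclusions you need about the center survive, but if you were to write this out in detail you would want to track exactly which structural statement about $E_e$ is established.
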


The following example indicates that, in characteristic $\ell$, valuable information is lost by restricting one's attention only to the set of $k$-points $e\cZ_n \to k$ rather than, say, the set of points $e\cZ_n \to k[\varepsilon]/(\varepsilon)^2$.
\begin{ex}\label{lpowerrootsbernstein}
Let $n=2$, $\ell=3$, $q=5$, and let $e$ be the primitive idempotent corresponding to the mod-$\ell$ inertial supercuspidal support $[T,\textbf{1}]$, where $T$ is the maximal torus and $\textbf{1}$ is its trivial character. If $\zeta$ is a nontrivial cube root of unity in $\cKb$, then $\zeta + \zeta^{-1}=-1$, and it follows from \cite[13.11]{h_bern}, combined with \cite[Thm 4.11]{paige} that $$e\cZ_2 \cong W(k)[\Theta_1,\Theta_2^{\pm 1},Y]/\left((Y-2)\Theta_1, (Y-2)(Y+1)\right).$$ In particular, its spectrum possesses two irreducible components: $\{Y=2\}$ and $\{\Theta_1=0\}$, which become the principal block and a cuspidal block, respectively, over $\cKb$ in the sense of Theorem~\ref{characteristiczerosplitting}. On the other hand, since $-1\equiv 2$ mod $\ell$, the special fiber $e\cZ_2/(\ell)$ contains the nontrivial cube-root of unity $\zeta=Y-1$.
\end{ex}

Now let $R$ be a $W(k)$-algebra, and let $V$ be a co-Whittaker $R[G_n]$-module.  Suppose further that $V$ lies in $e\Rep_{W(k)}(G_n)$ for some primitive idempotent $e$ (so the supercuspidal support map $f_V$ factors through the projection $\cZ_n\to e\cZ_n$). Let $\fW_n$ be the smooth $W(k)[G_n]$-module $\cInd_{U_n}^{G_n} \psi$.  For any
primitive idempotent $e$ of $\cZ_n$, we have an action of $e\cZ_n$ on $e\fW_n$.

\begin{thm}[\cite{h_whitt}, Theorem 6.3]
\label{universalcowhitt} 
Let $e$ be any primitive idempotent of $\cZ_n$.
The smooth $e\cZ_n[G_n]$-module $e\fW_n$
is a co-Whittaker $e\cZ_n[G_n]$-module. If $R$ is Noetherian and has an $e\cZ_n$-algebra structure, the module $e\fW_n \otimes_{e\cZ_n} A$
is a co-Whittaker $R[G_n]$-module. Conversely, 
$V$ is dominated by $e\fW_n \otimes_{e\cZ_n, f_V} R$.
\end{thm}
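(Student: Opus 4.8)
The plan is to prove Theorem~\ref{universalcowhitt} in three stages: first establish that $e\fW_n$ is co-Whittaker over $e\cZ_n$, then deduce the base-change statement for Noetherian $e\cZ_n$-algebras $R$, then prove the domination assertion. For the first stage, one must verify the three defining properties. Admissibility of $e\fW_n$ as an $e\cZ_n[G_n]$-module is the heart of the matter: $\fW_n = \cInd_{U_n}^{G_n}\psi$ is not admissible over $W(k)$, but after cutting by a primitive idempotent $e$, the key input is that $e\cZ_n$ is Noetherian (Theorem~\ref{characteristiczerosplitting}(1)) and that for each open compact $K$ the space $(e\fW_n)^K$ is finitely generated over $e\cZ_n$; this is exactly the content one imports from \cite{h_whitt} (essentially the finiteness of the Bernstein-center action on the Gelfand--Graev representation, via the geometry of \cite{h_bern}). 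The computation $(\cInd_{U_n}^{G_n}\psi)^{(n)} \cong W(k)$ is classical (the $\psi$-coinvariants of the Gelfand--Graev representation are free of rank one), and applying the exact functor $V\mapsto V^{(n)}$ to the direct-factor decomposition $\fW_n = \bigoplus_e e\fW_n$ together with the identification of which block $e$ the rank-one space lands in shows $(e\fW_n)^{(n)}$ is free of rank one over $e\cZ_n$; property (3), that $e\fW_n$ has no nongeneric quotients, follows because any quotient $Q$ with $Q^{(n)}=0$ would, by Frobenius reciprocity, receive no nonzero map detecting it, contradicting that $\fW_n$ is generated by its Whittaker functional — more precisely $\fW_n \to \Ind_U^G\psi$ is injective on the relevant piece, so a quotient killing $(n)$-coinvariants is zero.

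For the second stage, suppose $R$ is Noetherian with an $e\cZ_n$-algebra structure and set $V' := e\fW_n\otimes_{e\cZ_n} R$. Admissibility is preserved under base change along $e\cZ_n \to R$ precisely because $e\cZ_n$ is Noetherian and $(e\fW_n)^K$ is finitely generated, so $(V')^K = (e\fW_n)^K\otimes_{e\cZ_n}R$ is finitely generated over $R$. The coinvariants behave well by the natural isomorphism $(V\otimes_R M)^{(n)}\cong V^{(n)}\otimes_R M$ recalled just before the definition of co-Whittaker modules, applied with $M = R$ viewed as an $e\cZ_n$-module: thus $(V')^{(n)} \cong (e\fW_n)^{(n)}\otimes_{e\cZ_n}R \cong R$, free of rank one. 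For property (3), a nongeneric quotient $Q$ of $V'$ is also a quotient of $e\fW_n$ (as $e\cZ_n[G_n]$-modules, forgetting the $R$-structure), and $Q^{(n)}=0$ forces $Q=0$ by the property already established for $e\fW_n$. Hence $V'$ is co-Whittaker over $R[G_n]$.

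For the third stage, let $V$ be any co-Whittaker $R[G_n]$-module lying in $e\Rep_{W(k)}(G_n)$, so that by Schur's lemma the supercuspidal support gives $f_V : e\cZ_n \to R$. We want a surjection $e\fW_n\otimes_{e\cZ_n, f_V} R \twoheadrightarrow V$. The natural candidate comes from the canonical Whittaker map: choosing an isomorphism $V^{(n)}\cong R$ gives the $G_n$-equivariant map $V\to \Ind_U^G\psi_R$ of \eqref{canonicalwhittaker}, and dually, a $U_n$-equivariant functional $V \to R$ (evaluation at $1$ of $W_v$, i.e.\ the quotient $V\to V^{(n)}\cong R$) yields by Frobenius reciprocity for $\cInd$ a $G_n$-equivariant map $\fW_n = \cInd_{U_n}^{G_n}\psi \to V$; since $V$ lies in $e\Rep_{W(k)}(G_n)$ this factors through $e\fW_n$, and since the $e\cZ_n$-action on $V$ is through $f_V$ it further factors through $e\fW_n\otimes_{e\cZ_n,f_V}R$. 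This map is nonzero because its composition with $V\to V^{(n)}$ is (by construction) the canonical surjection onto $V^{(n)}\cong R$; its image is therefore a nonzero sub-$R[G_n]$-module whose quotient $Q$ in $V$ satisfies $Q^{(n)}=0$ (the $(n)$-coinvariants of the image already exhaust $V^{(n)}$, using exactness of $(-)^{(n)}$), so by property (3) of the co-Whittaker module $V$ we get $Q=0$, i.e.\ the map is surjective. This is exactly the assertion that $V$ is dominated by $e\fW_n\otimes_{e\cZ_n,f_V}R$.

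The main obstacle is the admissibility of $e\fW_n$ over $e\cZ_n$ in the first stage: this is genuinely a theorem about the integral Bernstein center acting with good finiteness properties on the Gelfand--Graev representation, and it rests on the structural results of \cite{h_bern} (Noetherianness of $e\cZ_n$, the description of blocks) together with Helm's analysis in \cite{h_whitt}. Everything downstream — base change and domination — is then a formal consequence of the exactness and base-change compatibility of the $(n)$-coinvariants functor and Frobenius reciprocity, so I would cite \cite[Theorem 6.3]{h_whitt} for the admissibility input and present the rest as outlined above.
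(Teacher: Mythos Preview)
The paper does not prove Theorem~\ref{universalcowhitt}; it is stated with attribution to \cite[Theorem~6.3]{h_whitt} and used as a black box throughout. There is no argument in the paper to compare your proposal against, and what you have written is an attempt to reconstruct Helm's proof rather than anything the present paper supplies.

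That said, your sketch has a genuine gap in the third (domination) stage. You claim that the $(U_n,\psi)$-equivariant functional $V\to V^{(n)}\cong R$ ``yields by Frobenius reciprocity for $\cInd$ a $G_n$-equivariant map $\fW_n\to V$.'' But the adjunction for compact induction reads
\[
\Hom_{G_n}\bigl(\cInd_{U_n}^{G_n}\psi,\,V\bigr)\;\cong\;\Hom_{U_n}\bigl(\psi,\,V|_{U_n}\bigr),
\]
so producing a map $\fW_n\to V$ requires a $(U_n,\psi)$-\emph{eigenvector in} $V$, not a $(U_n,\psi)$-equivariant \emph{functional on} $V$. What you actually have is an element of $\Hom_{U_n}(V,\psi_R)$, which under the adjunction for ordinary induction gives the Whittaker map $V\to \Ind_U^G\psi_R$ you already wrote down---not a map in the opposite direction. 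For smooth representations of $p$-adic groups there is no averaging projector over the non-compact group $U_n$, so $(U,\psi)$-invariants and $(U,\psi)$-coinvariants are not interchangeable, and this step does not go through as written. Helm's actual argument for domination in \cite{h_whitt} proceeds differently. Your first stage also glosses over why $(e\fW_n)^{(n)}$ is free of rank one \emph{over $e\cZ_n$}: the direct-sum decomposition you invoke would at best exhibit it as a $W(k)$-summand of $\fW_n^{(n)}$, which is not the same statement.
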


We thus say that, up to the equivalence relation induced by dominance, $e\fW_n$ is
the universal co-Whittaker module in $e\Rep_{W(k)}(G_n)$.

\begin{lemma}[\cite{LM} Lemma 2.4]
\label{equivalencecriterion}
Let $R$ be a Noetherian ring and suppose $V_1$ and $V_2$ are two co-Whittaker $R[G_n]$-modules. The following are equivalent:
\begin{enumerate}
\item There exists $W$ in $\cW(V_1,\psi)\cap \cW(V_2,\psi)$ such that $W(g)\in R^{\times}$ for some $g\in G$.
\item $\cW(V_1,\psi)= \cW(V_2,\psi)$.
\item $f_{V_1}= f_{V_2}$.
\item $V_1$ and $V_2$ are equivalent.
\end{enumerate}
\end{lemma}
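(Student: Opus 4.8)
The plan is to prove the four conditions equivalent by establishing the three equivalences (1) $\Leftrightarrow$ (2), (2) $\Leftrightarrow$ (4), and (3) $\Leftrightarrow$ (4), using repeatedly the lemma just proved — that $\cW(V,\psi)$ is a co-Whittaker $R[G_n]$-module equivalent to $V$ — together with exactness of the functor $(-)^{(n)}$ and the description of $\cW(V,\psi)\to\cW(V,\psi)^{(n)}$ as evaluation at $1$ onto a free rank-one $R$-module (so that, after fixing $\cW(V,\psi)^{(n)}\cong R$, one has $\cW(V,\psi)\subseteq\Ind_U^G\psi_R$).

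For (1) $\Leftrightarrow$ (2): the direction (2) $\Rightarrow$ (1) is immediate, since a common co-Whittaker model has $(-)^{(n)}$ free of rank one, so evaluation at $1$ is surjective onto $R$ and some $W$ in the model satisfies $W(1)\in R^\times$. For (1) $\Rightarrow$ (2) I would argue that, given $W$ with $W(g)\in R^\times$, the translate $g\cdot W$ has $(g\cdot W)(1)=W(g)\in R^\times$, so the cyclic submodule $R[G_n]\cdot W\subseteq\cW(V_i,\psi)$ already surjects onto $\cW(V_i,\psi)^{(n)}\cong R$; applying the exact functor $(-)^{(n)}$ to $0\to R[G_n]\cdot W\to\cW(V_i,\psi)\to Q_i\to 0$ gives $Q_i^{(n)}=0$, and co-Whittakerness of $\cW(V_i,\psi)$ forces $Q_i=0$, so $\cW(V_i,\psi)=R[G_n]\cdot W$ for $i=1,2$ and the two models coincide.

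For (2) $\Leftrightarrow$ (4): one direction follows at once from $V_i$ being equivalent to $\cW(V_i,\psi)$ and transitivity of the equivalence relation. For the converse, if a co-Whittaker $V''$ dominates both $V_1$ and $V_2$, then each $R[G_n]$-linear surjection $V''\twoheadrightarrow V_i$ induces a surjection of free rank-one $R$-modules $(V'')^{(n)}\twoheadrightarrow V_i^{(n)}$, hence an isomorphism, and naturality of the canonical map $V\to\Ind_U^G\psi_{V^{(n)}}$ then exhibits $\cW(V'',\psi)$, $\cW(V_1,\psi)$, $\cW(V_2,\psi)$ as one and the same submodule of $\Ind_U^G\psi_R$.

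For (3) $\Leftrightarrow$ (4): if $V''$ dominates $V_1$ and $V_2$, then for each $z\in\cZ_n$, naturality of the center's action along $V''\twoheadrightarrow V_i$, Schur's lemma ($\End_{R[G_n]}(V_i)\cong R$), and faithfulness of $V_i$ over $R$ (from the surjection $V_i\twoheadrightarrow V_i^{(n)}\cong R$) force $f_{V''}(z)=f_{V_i}(z)$, so $f_{V_1}=f_{V_2}$. Conversely, assuming $f_{V_1}=f_{V_2}=:f$, after reducing to the case where $V_1,V_2$ lie in a single block $e\Rep_{W(k)}(G_n)$ — legitimate because a co-Whittaker module has only finitely many nonzero block-components and the case of connected $R$ is the essential one — the map $f$ makes $R$ an $e\cZ_n$-algebra, and Theorem~\ref{universalcowhitt} provides the co-Whittaker module $e\fW_n\otimes_{e\cZ_n,f}R$, which by that same theorem dominates both $V_1$ and $V_2$; hence they are equivalent. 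I expect this last implication (3) $\Rightarrow$ (4) to be the main obstacle: it is the only step that genuinely uses the universal co-Whittaker module coming from the integral Bernstein center rather than soft manipulations of Whittaker functionals, and the bookkeeping needed to reduce to a single block when $R$ is not connected is the one point requiring care.
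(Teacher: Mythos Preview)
The paper does not give its own proof of this lemma: it is stated with a citation to \cite{LM}, Lemma~2.4, and no argument is supplied. So there is no in-paper proof to compare against; your proposal should be evaluated on its own merits.

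Your argument is correct and well-organized. The implications $(1)\Leftrightarrow(2)$ and $(2)\Leftrightarrow(4)$ use exactly the right ingredients: exactness of $(-)^{(n)}$, the description of $\cW(V,\psi)\to\cW(V,\psi)^{(n)}$ as evaluation at $1$, and the preceding lemma that $\cW(V,\psi)$ is co-Whittaker and equivalent to $V$. In $(1)\Rightarrow(2)$ the key observation that $R[G_n]\cdot W$ already surjects onto $\cW(V_i,\psi)^{(n)}$ (because a translate of $W$ takes a unit value at the identity) is precisely what is needed to force the quotient to be nongeneric, hence zero. In $(4)\Rightarrow(2)$ the point that a surjection of co-Whittaker modules induces an isomorphism on $(-)^{(n)}$, and hence identifies the Whittaker models inside $\Ind_U^G\psi_R$, is correct; the independence of $\cW(V,\psi)$ from the chosen isomorphism $V^{(n)}\cong R$ (stated in the paper) is what makes the identification unambiguous.

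The implication $(3)\Rightarrow(4)$ via Theorem~\ref{universalcowhitt} is also right, and you correctly identify the block reduction as the one nontrivial bookkeeping step. To make it precise: since $\cZ_n$ acts on $V_i$ through $f_{V_i}$, each primitive idempotent $e$ acts as the idempotent $f_{V_i}(e)\in R$; faithfulness of $V_i$ over $R$ gives $eV_i=0\iff f_{V_i}(e)=0$, and since $f_{V_1}=f_{V_2}$ the two modules have the same finite set of nonvanishing block-components, each supported over the direct factor $f_{V_1}(e)R$ of $R$. Applying Theorem~\ref{universalcowhitt} blockwise and assembling the dominating modules over the finitely many factors yields a single $V''$ dominating both. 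With that clarification your proof is complete.
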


\begin{cor}\label{nilpotentliftcowhitt}
Let $R$ be a complete local Noetherian $W(k)$-algebra with residue field $k$ (e.g. an Artin local $k$-algebra). There is a bijection from the set of equivalence classes of co-Whittaker $R[G_n]$-modules to the set $\mathcal{A}_{R}^{\gen}(n)$, sending an equivalence class to the Whittaker model of any representative.
\end{cor}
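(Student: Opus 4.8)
The plan is to construct the bijection explicitly and check well-definedness, injectivity, and surjectivity, relying on Lemma~\ref{equivalencecriterion} for the injectivity half. First I would define the map: given an equivalence class of co-Whittaker $R[G_n]$-modules, choose a representative $V$; by definition of co-Whittaker, $V^{(n)}$ is free of rank one, so after a choice of trivialization we obtain the Whittaker model $\cW(V,\psi)\subset\Ind_U^G\psi_R$, which is itself an equivalent co-Whittaker module. I would then argue that $\cW(V,\psi)$ is a nilpotent lift of an object in $\mathcal{A}^{\gen}_k(n)$: reducing mod the maximal ideal $\fm$ of $R$, the exactness of the $(n)$-th derivative functor and the base-change isomorphism $(V\otimes_R k)^{(n)}\cong V^{(n)}\otimes_R k$ show that $V\otimes_R k$ is a co-Whittaker $k[G_n]$-module, hence (being finitely generated admissible over a field with one-dimensional derivative and no nongeneric quotients) has an irreducible generic quotient $\tau$, and in fact $\cW(V,\psi)\otimes_R k$ is the Whittaker model $\cW(\tau,\psi)$; admissibility of $\cW(V,\psi)$ over $R$ is inherited from $V$. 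This shows $\cW(V,\psi)$ defines a point of $\mathcal{A}^{\gen}_R(n)$.

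Next I would check the map is well-defined on equivalence classes: if $V_1$ and $V_2$ are equivalent, then by Lemma~\ref{equivalencecriterion} (equivalence of conditions (2) and (4)) we have $\cW(V_1,\psi)=\cW(V_2,\psi)$ inside $\Ind_U^G\psi_R$, so they give the same element of $\mathcal{A}^{\gen}_R(n)$. Injectivity is then immediate from the same lemma in the other direction: if $\cW(V_1,\psi)$ and $\cW(V_2,\psi)$ are isomorphic as nilpotent lifts, i.e. equal as submodules of $\Ind_U^G\psi_R$ up to the ambient identification, then condition (2) holds, hence $V_1$ and $V_2$ are equivalent. One small point to nail down here is that ``isomorphism of nilpotent lifts'' in the sense of Definition~\ref{nilpotentliftdef} matches equality of Whittaker models; since a nilpotent lift is defined as a \emph{submodule} of $\Ind_U^G\psi_R$ and any $G$-equivariant automorphism of $\Ind_U^G\psi_R$ fixing $\psi_R$ is scalar (an $R^\times$-multiple, by the rank-one derivative and Frobenius reciprocity as in \cite[III.1.11]{vig} applied over $R$), two lifts are isomorphic iff they coincide as subspaces, so the matching is clean.

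For surjectivity, given $\widetilde\tau\in\mathcal{A}^{\gen}_R(n)$, which by definition is an admissible $R[G_n]$-submodule of $\Ind_U^G\psi_R$ with $\widetilde\tau\otimes_R k=\cW(\tau,\psi)$ for some $\tau\in\mathcal{A}^{\gen}_k(n)$, I would show $\widetilde\tau$ is itself co-Whittaker: admissibility is given; condition (2) follows because $\widetilde\tau^{(n)}$ surjects onto $(\widetilde\tau\otimes_R k)^{(n)}=\cW(\tau,\psi)^{(n)}\cong k$ and, being a subquotient-compatible invariant of a module whose reduction has one-dimensional derivative, Nakayama's lemma forces $\widetilde\tau^{(n)}$ to be cyclic, while the evaluation-at-$1$ map $\widetilde\tau^{(n)}\to R$ coming from the inclusion into $\Ind_U^G\psi_R$ is injective (a function in the image of $\widetilde\tau$ vanishing at $1$ would, by $G$-translation and the $\psi$-covariance, lie in $\widetilde\tau(U,\psi)$), so $\widetilde\tau^{(n)}\cong R$ is free of rank one; condition (3) (no nongeneric quotient) follows because any quotient $Q$ with $Q^{(n)}=0$ would, by right-exactness of the derivative and Nakayama again, have $Q\otimes_R k$ a nongeneric quotient of $\cW(\tau,\psi)$, forcing $Q\otimes_R k=0$ and hence $Q=0$. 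Then $\cW(\widetilde\tau,\psi)=\widetilde\tau$ and the equivalence class of $\widetilde\tau$ maps to $\widetilde\tau$, giving surjectivity. The main obstacle is the surjectivity step, specifically verifying condition (2) of the co-Whittaker definition for an arbitrary nilpotent lift: one must carefully combine the rank-one derivative of the special fiber with Nakayama's lemma and the freeness statement, and ensure the evaluation map identifies $\widetilde\tau^{(n)}$ with $R$ rather than merely a rank-one module with possible torsion — for this the completeness and the residue field being exactly $k$ are used, and it is worth recording that over such $R$ finitely generated modules with one-dimensional reduction and no torsion killed by passing to the ambient induced module are automatically free.
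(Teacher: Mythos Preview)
Your overall architecture matches the paper's proof: both directions are organized around Lemma~\ref{equivalencecriterion}, and the forward map sends an equivalence class to the common Whittaker model, checking that its reduction mod~$\fm$ is the Whittaker model of the (irreducible generic) cosocle of $V\otimes_R k$. The paper handles the reverse direction (a nilpotent lift is co-Whittaker) by citing \cite[6.3.2, 6.3.4]{eh}; you instead attempt a direct Nakayama argument, which is a reasonable and more self-contained route.

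The gap is in your verification of condition~(2) for $\widetilde\tau$. Your parenthetical claim---that $W(1)=0$ forces $W\in\widetilde\tau(U,\psi)$ ``by $G$-translation and $\psi$-covariance''---is not justified: right translation by $g\in G$ changes the coinvariant class, so vanishing at $1$ does not by itself place $W$ in $\widetilde\tau(U,\psi)$. What does work is to argue \emph{surjectivity} of the evaluation map $\widetilde\tau^{(n)}\to R$ first: since over $k$ the evaluation $\cW(\tau,\psi)^{(n)}\to k$ is nonzero (else every Whittaker function would vanish identically), there exists $W_0\in\widetilde\tau$ with $W_0(1)\in R^\times$, so evaluation is surjective and splits. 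Writing $\widetilde\tau^{(n)}\cong R\oplus K$, the hypothesis $\widetilde\tau^{(n)}\otimes_R k\cong k$ gives $K/\fm K=0$; for an Artin local $k$-algebra this already forces $K=0$ since $\fm$ is nilpotent (no finite-generation hypothesis needed), and for general complete local Noetherian $R$ one should either establish finite generation of $\widetilde\tau^{(n)}$ or invoke the results of \cite{eh} as the paper does. With this correction your proof goes through and is essentially the paper's argument with the citation unpacked.
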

\begin{proof}
If $\widetilde{\tau}$ is in $\mathcal{A}_{R}^{\gen}(n)$, we must show it is co-Whittaker. But this follows immediately from \cite[6.3.2 Lemma, 6.3.4 Proposition]{eh}.

Conversely, let $V$ be a representative of an equivalence class of co-Whittaker $R[G_n]$-modules. By Lemma~\ref{equivalencecriterion}, each equivalence class has a unique Whittaker model $\cW\subset \Ind_U^G\psi_{R}$. The cosocle (i.e. largest semisimple quotient) of $V\otimes_Rk$ is irreducible and generic (e.g. \cite[6.3.5 Lemma]{eh}). Hence $\text{cosoc}(V\otimes_Rk)$ isomorphic to its Whittaker model. Thus the reduction $\cW\otimes_{R}k$ is isomorphic to the Whittaker model of $\text{cosoc}(V\otimes_Rk)$, so $\cW$ is a nilpotent lift of $\text{cosoc}(V\otimes_Rk)$.
\end{proof}

There is a duality operation on co-Whittaker modules which interpolates the contragredient across a co-Whittaker family (\cite[Prop 2.6]{converse}). If $V$ is a smooth $W(k)[G_n]$-module, let $V^{\iota}$ denote the $W(k)[G_n]$-module with the same underlying $W(k)$-module structure, and for which the $G_n$ action, which we will denote by $g\cdot v$, is given by $g\cdot v = g^{\iota}v$, where $g^{\iota}={ ^tg}^{-1}$. This duality has a very concrete interpretation in terms of Whittaker functions. Let 
$$\omega_{n,m} = \begin{pmatrix}
I_{n-m}& 0 \\
0 & \omega_m
\end{pmatrix},$$
 where $\omega_m$ is the longest Weyl element of $G_m$. For any function $W$ on $G_n$, let $\widetilde{W}(g)=W(\omega_n g^{\iota})$. If $W$ is in $\cW(V,\psi)$, then $\widetilde{W}$ is in $\cW(V^{\iota},\psi^{-1})$.

\section{Failure of completeness of Whittaker models over $\flb$}\label{vanishinglemmasgonewrong}
Over $\CC\cong \cKb$, if $H$ is in $\cInd_{U_t}^{G_t}\psi_{\cKb}$, and $$\langle H, W\rangle:= \int_{U\backslash G}H(x)W(x)dx = 0$$ for all Whittaker functions $W$ in $\cW(\tau,\psi^{-1})$, for all $\tau\in \mathcal{A}_{\cKb}^{\gen}(t)$, then $H=0$. In other words, the Whittaker models of $\mathcal{A}_{\cKb}^{\gen}(t)$ are \emph{complete} with respect to this integral pairing. In all known proofs of converse theorems, completeness of Whittaker models is used in an essential way.

\begin{question}\label{modellvanishingquestion}
Let $H$ be an element of $\cInd_{U_t}^{G_t}\psi_k$. Suppose that $\int_{{U_t}\backslash G_t}H(x)W(x)dx =0$ for all $W\in \cW(\tau,\psi_k^{-1})$ for all $\tau\in\mathcal{A}_t^{\gen}(t)$. Must $H$ be zero?
\end{question}
The answer to Question~\ref{modellvanishingquestion} is ``no," as the following example shows already when $t=1$. Suppose that $q\equiv 1\mod \ell$ and $q-1 = \ell^am$ where $m$ is relatively prime to $\ell$. Decompose $\cO_F^{\times} = X\times Y$ where $X$ is subgroup of $\cO_F^{\times}$ whose pro-order is prime-to-$\ell$ and $Y$ is a cyclic group of order $\ell^a$. Since $q$ is invertible in $k$, there exists a $k$-valued Haar measure $\mu$ on $F^{\times}$, which we may normalize so that $\mu(X)=1$ (or any unit). We remark that $\mu(\cO_F^{\times})=0$. Let $n=1$, so that $U=\{1\}$ and $\psi$ is trivial, and every smooth character $\chi:F^{\times}\to k^{\times}$ is irreducible generic with Whittaker space $c\cdot \chi$ for constants $c\in k$. Take $H$ in $\cInd_{U_1}^{G_1}\psi = C_c^{\infty}(F^{\times},W(k))$ to be the characteristic function of $\cO_F^{\times}$. Given a character $\chi$, we have
$$\langle H, \chi\rangle = \int_{F^{\times}}H(x)\chi(x)dx = \int_{\cO_F^{\times}}\chi(x)dx.$$ Since each $y\in Y$ has $\ell$-power order, so does $\chi(y)\in k$, and thus $\chi(y)=1$. Let $K$ be the largest finite index subgroup of $X$ such that $K\subset \ker(\chi)$. Then for any $\chi$ we have
$$\langle H, \chi\rangle=\mu(K)\sum_{x\in \cO_F^{\times}/K}\chi(x)=\ell^a[X:K]^{-1}\sum_{x\in X/K}\chi(x)=0.$$

However, if we expand the coefficient ring of $\tau$ enough, the answer becomes ``yes", as we will now describe. 

The property of being a co-Whittaker model does not depend on the choice of $\psi$ in the definition of $(-)^{(n)}$. The Whittaker model of $e\fW_n$ with respect to $\psi^{-1}$ has the following $W(k)$-module structure, as observed in \cite[p. 1010]{converse}.

\begin{prop}[\cite{converse}]
\label{universalwhittakerfunctions}
There is a map of $W(k)$-modules $\theta_e:e\cZ_n \to W(k)$ which induces an isomorphism of $W(k)$-modules 
\begin{align*}
e\fW_n &\isomto \cW(e\fW_n, \psi^{-1})\\
f &\mapsto W_f\\
\theta_e\circ W &\mapsfrom W
\end{align*}
\end{prop}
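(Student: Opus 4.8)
The plan is to build the map $\theta_e \colon e\cZ_n \to W(k)$ by evaluating universal Whittaker functions, and then to verify that the resulting map on $e\fW_n$ is a $W(k)$-module isomorphism by base-change to the characteristic-zero fibers, where the statement reduces to the classical theory of $GL_n(F)$ over $\overline{\cK}$. First I would recall that $e\fW_n = e(\cInd_{U_n}^{G_n}\psi)$ is the universal co-Whittaker module in $e\Rep_{W(k)}(G_n)$ (Theorem~\ref{universalcowhitt}), with supercuspidal support map $f_{e\fW_n}$ equal to the identity of $e\cZ_n$, and that passing from $(-)^{(n)}$ defined with $\psi$ to the one defined with $\psi^{-1}$ changes nothing about the co-Whittaker property. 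Applying the canonical map~(\ref{canonicalwhittaker}) for $\psi^{-1}$ to $V = e\fW_n$ and an isomorphism $V^{(n)}\cong e\cZ_n$ produces a surjection $e\fW_n \twoheadrightarrow \cW(e\fW_n,\psi^{-1}) \subset \Ind_{U_n}^{G_n}\psi^{-1}_{e\cZ_n}$; the content of the proposition is that this surjection is in fact an isomorphism and, after composing with an appropriate $e\cZ_n$-linear ``integration'' functional, identifies $\cW(e\fW_n,\psi^{-1})$ with $e\cZ_n$ as a $W(k)$-module, i.e. pins down the $W(k)$-module $e\fW_n$ concretely.

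The key steps I would carry out are: (1) Define $\theta_e$. The duality $V \mapsto V^\iota$ of \S\ref{section:cowhittaker} sends $e\fW_n$ to a co-Whittaker module whose supercuspidal support is $f_{e\fW_n}$ twisted by the involution on $e\cZ_n$ induced by $g\mapsto g^\iota$; comparing $\cW(e\fW_n,\psi)$ and $\cW((e\fW_n)^\iota,\psi^{-1})$ via $W\mapsto\widetilde W$, $\widetilde W(g)=W(\omega_n g^\iota)$, and using Lemma~\ref{equivalencecriterion} to detect when two co-Whittaker modules share a Whittaker model, one extracts a $W(k)$-linear map $\theta_e\colon e\cZ_n\to W(k)$ measuring the ``discrepancy'' in normalizations — concretely $\theta_e$ records the value at a suitable group element of a universal Whittaker function. (2) Show the composite $e\fW_n \to \cW(e\fW_n,\psi^{-1})$, $f\mapsto W_f$, is injective. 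Since source and target are both co-Whittaker $e\cZ_n[G_n]$-modules and any nonzero morphism of co-Whittaker modules is surjective with co-Whittaker kernel vanishing, it suffices to check injectivity after inverting $\ell$; by Theorem~\ref{characteristiczerosplitting}(2), $e\cZ_n\otimes\overline{\cK}$ is a product of the $\cZ_{M',\pi'}$, so this reduces to the classical fact (uniqueness of Whittaker models over $\overline{\cK}$, \cite[III.1.11]{vig}) that $V\to\cW(V,\psi^{-1})$ is an isomorphism for the $\overline{\cK}$-universal co-Whittaker module in each block. (3) Identify $\theta_e\circ(-)$ as the inverse. One checks that $f\mapsto \theta_e\circ W_f$ and the assignment $W\mapsfrom W$ (viewing $W\in\cW(e\fW_n,\psi^{-1})$ as an element of $e\fW_n$ through the isomorphism of step (2)) are mutually inverse $W(k)$-module maps; again this is an identity between maps of finitely generated modules over the reduced $\ell$-torsion-free ring $e\cZ_n$ (Theorem~\ref{characteristiczerosplitting}(1)), so it can be verified after $\otimes\overline{\cK}$, where it is the standard statement relating a Whittaker function to its value-at-identity functional.

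The main obstacle, I expect, is step~(1): correctly defining $\theta_e$ as an honest $W(k)$-linear map on all of $e\cZ_n$ — not merely on each characteristic-zero component — and showing it is well-defined integrally. Over $\overline{\cK}$ the functional ``$W\mapsto$ value relevant to the $\psi^{-1}$-normalization'' is visible blockwise, but one must check these glue to a map on the integral ring $e\cZ_n$ and land in $W(k)$ rather than $\overline{\cK}$; this is exactly where the $\ell$-torsion-freeness and reducedness of $e\cZ_n$ from Theorem~\ref{characteristiczerosplitting}(1), together with the fact that $e\fW_n$ is $\ell$-torsion free as a $W(k)$-module, are used to control denominators. Once $\theta_e$ is in hand, steps (2) and (3) are formal consequences of the reduction-to-characteristic-zero principle supplied by Theorem~\ref{characteristiczerosplitting} and the classical uniqueness of Whittaker models, so I would not expect them to be difficult. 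I would also remark that this is precisely the statement recorded on p.~1010 of \cite{converse}, so the proof can be given by citation; the sketch above indicates how one reconstructs it.
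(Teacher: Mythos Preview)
The paper does not prove this proposition: it is quoted from \cite[p.~1010]{converse} and used as a black box (as you yourself note at the end of your proposal). There is therefore no in-paper argument to compare your sketch against; the correct ``proof'' here is the citation, which you supply.

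As a standalone reconstruction, your outline has the right shape---reduce to characteristic zero via Theorem~\ref{characteristiczerosplitting} and use that $e\fW_n\subset\cInd_U^G\psi$ is a $W(k)$-module of $W(k)$-valued functions, hence $\ell$-torsion-free, to transport injectivity back from $\overline{\cK}$. Two points deserve tightening. First, in step~(2) the phrase ``any nonzero morphism of co-Whittaker modules is surjective with co-Whittaker kernel vanishing'' is garbled: surjectivity is correct, but the kernel of such a surjection has trivial $n$-th derivative (so is \emph{not} co-Whittaker) and need not vanish. Injectivity therefore does not follow from co-Whittaker formalism; it genuinely requires the $\ell$-torsion-freeness reduction you describe immediately afterward. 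Second, your step~(1) construction of $\theta_e$ via the involution $V\mapsto V^{\iota}$ and a ``discrepancy in normalizations'' is too vague to serve as a definition. The way the proposition is \emph{used} in the proof of Corollary~\ref{bigvanishinglemmaarbitrary} pins down what $\theta_e$ must satisfy, namely $\theta_e(W_f(x))=f(x)$ for every $x\in G$, but actually producing a $W(k)$-linear $\theta_e:e\cZ_n\to W(k)$ with this property is the substance of the result, and your outline does not supply it; for that one must go to \cite{converse}.
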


The following Corollary is essentially a duality statement:

\begin{cor}
\label{bigvanishinglemmaarbitrary}
Let $R$ be a $W(k)$-algebra and suppose $H$ is a nonzero element of $\cInd\psi_R$ Then there is a primitive idempotent $e$ of $\cZ$ and a Whittaker function $W$ in $\cW(e\fW_n, \psi^{-1})$ such that 
$$\langle H,W\rangle:= \int_{U\backslash G}H(x)\otimes W(x)dx$$ is nonzero in $R\otimes_{W(k)}e\cZ$.
\end{cor}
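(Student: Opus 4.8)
The plan is to reduce the statement to the explicit description of the Whittaker model of the universal co-Whittaker module provided by Proposition~\ref{universalwhittakerfunctions}, and then to use a Fourier-analysis argument on the compact-open group generating the support of $H$ — the crucial point being that, after passing to $e\cZ_n$-coefficients, the relevant group algebra is \emph{semisimple} so that no cancellation can occur. First I would pick a primitive idempotent $e$ of $\cZ := \cZ_n$ and examine the $e$-component of $H$: since $H$ is nonzero in $\cInd_{U}^{G}\psi_R$ and the $e$'s are orthogonal idempotents summing to the identity on every smooth module, there is some $e$ for which $eH \neq 0$, and it suffices to find $W \in \cW(e\fW_n,\psi^{-1})$ pairing nontrivially with $H$ (the pairing only sees the $e$-component of $H$, as $e$ acts as the identity on $e\fW_n$ and as a projector on $H$). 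By Theorem~\ref{universalcowhitt} the module $e\fW_n = e\cInd_{U}^{G}\psi$ is co-Whittaker over $e\cZ$, and by Proposition~\ref{universalwhittakerfunctions} its $\psi^{-1}$-Whittaker model $\cW(e\fW_n,\psi^{-1})$ is, as a $W(k)$-module, the image of $e\fW_n$ under $f \mapsto W_f$ with $W_f = \theta_e\circ(\text{translation of }f)$; concretely its elements are indexed by $f \in e\cInd_{U}^{G}\psi$.

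The heart of the matter is then the computation of $\langle H, W_f\rangle \in R\otimes_{W(k)} e\cZ$ for $f$ ranging over $e\fW_n$, and showing it is not identically zero. Here I would use the concrete formula: $\langle H, W_f\rangle = \int_{U\backslash G} H(x)\otimes W_f(x)\,dx$, and expand $W_f(x)$ via Proposition~\ref{universalwhittakerfunctions} as an $e\cZ$-linear functional ($\theta_e$) applied to a $G$-translate of $f$. Choosing $f$ to be (the $e$-projection of) the characteristic function of a suitable small compact-open set $U g_0 K$ with $g_0$ in the support of $H$ and $K$ a sufficiently small compact open subgroup of $G$, the integral collapses to a value essentially of the form $\theta_e$ applied to an element of $e\cZ$ that records $H(g_0)$ times a Haar-measure normalization times a sum over $K/K'$ of $\psi$-values — and because we are now working inside $e\cZ[G]$ rather than $k[G]$, the relevant finite sums of roots of unity appearing in the $Y$-part of $\cO_F^\times$ (the obstruction exhibited in the $n=1$ example above) are no longer forced to vanish: the $\ell$-power roots of unity that were identified to $1$ over $k$ remain genuinely distinct in $e\cZ$, as Example~\ref{lpowerrootsbernstein} illustrates. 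Thus one can arrange $\langle H, W_f\rangle \neq 0$ by picking $g_0$ with $H(g_0)$ a unit multiple (after shrinking $K$ so that $H$ is constant on $g_0 K$ and $\psi$ is trivial on $K\cap U$) and invoking that $\theta_e$ is a nonzero (indeed $W(k)$-linear splitting) map.

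The main obstacle I anticipate is making the last step — the non-vanishing of the "Fourier coefficient" $\theta_e$ applied to the group-algebra element — fully rigorous without a clean statement of semisimplicity of the relevant completed group algebra over $e\cZ$. In the $n=1$ case one sees the vanishing over $k$ comes precisely from $\mu(\cO_F^\times)=0$ combined with $\chi(Y)=1$; over $e\cZ$ one must instead exhibit an honest Whittaker function (equivalently, an honest point of $\Spec e\cZ$, or rather the universal one) whose associated character of $Y$ is nontrivial, and argue that the universal object $e\fW_n$ "sees" all of these simultaneously. The technical route I would follow is: reduce mod a power of $\ell$ and a power of the augmentation ideal to land in an Artin local $k$-algebra $R'$ with enough $\ell$-power roots of unity (as in the $k[Y]/Y^4$ computation at the end of Section~\ref{counterexample}), apply the $n=1$-type computation there to see non-vanishing, and then lift back; alternatively, argue directly that the map $e\fW_n \to \cW(e\fW_n,\psi^{-1})$ being an \emph{isomorphism} of $W(k)$-modules (Proposition~\ref{universalwhittakerfunctions}) forces the pairing $\langle -,-\rangle$ restricted to $e$-components to be a perfect-enough duality that no nonzero $H$ can be annihilated. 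I expect the cleanest writeup to combine Lemma~\ref{equivalencecriterion}(1) — existence of a Whittaker function taking a unit value at some $g$ — with the explicit $\theta_e$ description, localizing the whole argument at a single point of the support of $H$.
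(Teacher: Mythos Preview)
Your proposal assembles the right ingredients—the bump function supported near a point where $H$ is nonzero, the idempotent decomposition, and Proposition~\ref{universalwhittakerfunctions}—but you are making the non-vanishing step much harder than necessary, and the ``main obstacle'' you anticipate is not an obstacle at all. The paper's argument is short: first build $\phi\in\cInd_U^G\psi^{-1}$ (over $W(k)$, no idempotent yet) supported on $UgK$ with $\phi(ugk)=\operatorname{meas}(K)^{-1}\psi(u)^{-1}$, where $K$ is small enough that $H$ is right $K$-invariant, $\operatorname{meas}(K)\neq 0$, and $\psi$ is trivial on $U\cap gKg^{-1}$. Then $\langle H,\phi\rangle=H(g)\neq 0$ in $R$, a pure bump-function computation with no sums of roots of unity to control. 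Since $\phi=\sum_e e\phi$, some primitive idempotent $e$ has $\langle H,e\phi\rangle\neq 0$ in $R$. Finally, Proposition~\ref{universalwhittakerfunctions} supplies $W_{e\phi}\in\cW(e\fW_n,\psi^{-1})$ with $\theta_e\circ W_{e\phi}=e\phi$; applying $\mathrm{id}\otimes\theta_e:R\otimes e\cZ\to R$ to $\langle H,W_{e\phi}\rangle$ yields $\langle H,e\phi\rangle\neq 0$, so $\langle H,W_{e\phi}\rangle$ cannot be zero in $R\otimes e\cZ$.

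The point you are missing is the \emph{direction} in which $\theta_e$ is used: it is a retraction $e\cZ\to W(k)$, so non-vanishing in $R$ of the image $\langle H,e\phi\rangle$ forces non-vanishing in $R\otimes e\cZ$ of $\langle H,W_{e\phi}\rangle$. There is no need for semisimplicity of any group algebra, no analysis of $\ell$-power roots of unity inside $e\cZ$, and no reduction to Artin local quotients. Indeed, the last of these is exactly the content of Theorem~\ref{vanishingthm}, which takes the present corollary as input; invoking that maneuver here would be circular. Your instinct that $\theta_e$ being a $W(k)$-linear splitting is relevant was correct—it is in fact the entire argument once the bump-function identity $\langle H,\phi\rangle=H(g)$ is in hand.
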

\begin{proof}
As a function on $G$ we have $H(g)\neq 0$ in $R$ for some $g\in G$. Now choose some compact open subgroup $K$ such that $H$ is invariant under $K$ (note: we must take $K$ to be small enough that its Haar measure is nonzero) and such that $\psi$ is trivial on $U\cap gKg^{-1}$. We build the function $\phi$ in $\cInd_{U}^G\psi^{-1}$ as follows. $\phi$ will be supported on the double coset $UgK$ and will satisfy $\phi(ngk)=\frac{1}{\text{meas}(K)}\psi(n)^{-1}$. In particular, $\langle H, \phi\rangle=H(g)$ is nonzero in $R$. Now as $\cInd_U^G\psi^{-1}$ is the direct sum of $e(\cInd\psi^{-1})$ for primitive idempotents $e$, there must be some primitive idempotent $e$ such that $\langle H, e\phi\rangle \neq 0$.

In the isomorphism of Proposition \ref{universalwhittakerfunctions} there is a Whittaker function $W_{e\phi}\in \cW(e\fW_n,\psi^{-1})$ corresponding to the function $e\phi$, and we have $\theta_e(W_{e\phi}(x))=e\phi(x)$. Now $\langle H,W_{e\phi}\rangle$ cannot be zero, since otherwise its image $\langle H, e\phi\rangle$ under the map $\rm{id}\otimes\theta_e$ would be zero.
\end{proof}

\section{Completeness of Whittaker models over Artin local $k$-algebras}

In the previous section we showed that completeness of Whittaker models holds after allowing representations with coefficients in $\cZ$, which is quite large. The main result of this paper is that it suffices to allow coefficients only in finite-dimensional local $k$-algebras.

\begin{thm}\label{vanishingthm}
Let $R$ be an Artin local $k$-algebra. Let $H$ be an element of $\cInd_{U_t}^{G_t}\psi_{R}$. Then the following implication holds:
if $\langle H,W\rangle =0$ for all $W\in\cW(\widetilde{\tau},\psi)$, for all $\widetilde{\tau}\in \mathcal{A}^{\gen}_{\nil}(t)$, then $H=0$.
\end{thm}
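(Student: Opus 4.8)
The plan is to deduce Theorem~\ref{vanishingthm} from the $\cZ$-coefficient version, Corollary~\ref{bigvanishinglemmaarbitrary}, by showing that the pairing against the universal Whittaker functions $\cW(e\fW_t,\psi)$ can be detected after pushing forward along maps $e\cZ_t \to R'$ to Artin local $k$-algebras $R'$. Concretely, suppose $H \in \cInd_{U_t}^{G_t}\psi_R$ is nonzero. First I would apply Corollary~\ref{bigvanishinglemmaarbitrary} (with $\psi$ in place of $\psi^{-1}$, or equivalently replace $H$ by $\widetilde{H}$) to produce a primitive idempotent $e$ of $\cZ_t$ and a Whittaker function $W \in \cW(e\fW_t,\psi)$ such that $\langle H, W\rangle \neq 0$ in $R \otimes_{W(k)} e\cZ_t$. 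The goal is then to specialize: find a $W(k)$-algebra map $\varphi: e\cZ_t \to R'$ with $R'$ an Artin local $k$-algebra such that the image of $\langle H, W\rangle$ under $\mathrm{id}\otimes\varphi: R\otimes_{W(k)} e\cZ_t \to R\otimes_{W(k)} R'$ remains nonzero; then $\varphi_*(e\fW_t) := e\fW_t \otimes_{e\cZ_t, \varphi} R'$ is a co-Whittaker $R'[G_t]$-module (Theorem~\ref{universalcowhitt}), hence by Corollary~\ref{nilpotentliftcowhitt} its Whittaker model is (the Whittaker model of) a nilpotent lift $\widetilde{\tau} \in \mathcal{A}_{\nil}^{\gen}(t)$, and the image of $W$ gives a Whittaker function in $\cW(\widetilde{\tau},\psi)$ pairing nontrivially with $H$ — but wait, $\langle H, W\rangle$ lands in $R\otimes_{W(k)}R'$, not $R$, so I also need to compose with an $R$-algebra map $R\otimes_{W(k)}R' \to R$, i.e. an $R$-point of $R'$; since $R$ and $R'$ both have residue field $k$ and $R$ contains $k$, one can instead arrange $R' = R$ or map $R' \to k \hookrightarrow R$ and adjust. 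I would handle this by first reducing to the case where the obstruction is detected already in $R\otimes_{W(k)} (e\cZ_t \otimes k)$, since $H$ is killed by some power of the maximal ideal and $R$ is a $k$-algebra.

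The key step — and the main obstacle — is the specialization: showing that a nonzero element of $R \otimes_{W(k)} e\cZ_t$ of the form $\langle H, W\rangle$ survives under pullback to $R \otimes_{W(k)} R'$ for some Artin local $k$-algebra quotient (or finite-length specialization) $e\cZ_t \to R'$ with residue field $k$. Since $e\cZ_t$ is a finitely generated reduced $\ell$-torsion-free $W(k)$-algebra (Theorem~\ref{characteristiczerosplitting}(1)), its mod-$\ell$ fiber $e\cZ_t/\ell$ is a finitely generated $k$-algebra, and a nonzero element of a finitely generated $k$-algebra $B$ tensored with the Artinian ring $R$ can be detected, after faithfully flat base change or by a prime-avoidance/generic-point argument, by a map $B \to B_{\mathfrak{m}}/\mathfrak{m}^N \to R'$ for a suitable maximal ideal $\mathfrak{m}$ with residue field $k$ (here using that $k$ is algebraically closed, so closed points have residue field $k$, and that $R\otimes_k -$ is exact over $k$-algebras) — the point being that $\bigcap_{\mathfrak m}\bigcap_N \mathfrak m^N = 0$ in a finitely generated reduced $k$-algebra, so completing at enough maximal ideals to finite level separates points, even after $\otimes_k R$. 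I would make this precise by: (i) reducing mod $\ell$, using that $H$ and $W$ mod $\ell$ still pair nontrivially (the nonzero value lies in a $k$-algebra as $R$ is a $k$-algebra, so no information is lost); (ii) writing $\langle H,W\rangle \in R\otimes_k (e\cZ_t/\ell)$ and, since this element is a finite $R$-linear combination of elements of $e\cZ_t/\ell$, finding a closed point $\mathfrak{m}$ and integer $N$ such that the image in $R\otimes_k (e\cZ_t/\mathfrak m^N)$ is nonzero — possible because $e\cZ_t/\ell$ is reduced and the intersection of all $\mathfrak m^N$ over all closed points is the nilradical, which is zero, and exactness of $R\otimes_k-$; (iii) taking $R' = (e\cZ_t/\ell)_{\mathfrak m}/\mathfrak m^N$, an Artin local $k$-algebra with residue field $k$, and finally composing the resulting element of $R\otimes_k R'$ with $\mathrm{id}_R \otimes (R' \to k) $ would kill it, so instead I keep coefficients in $R\otimes_k R'$ and observe that $\langle H, W_{R'}\rangle \neq 0$ in $R\otimes_k R'$ directly gives, for the nilpotent lift $\widetilde\tau$ over $R'$ and its Whittaker function $W_{R'}$, a contradiction with the hypothesis once we note $R \otimes_k R'$ receives $R$ compatibly — here I would invoke that we may take $R' $ to be a quotient of $R$ itself when the original $H$ already "lives over" $R$, or more cleanly enlarge $R$: the hypothesis ranges over \emph{all} nilpotent lifts to \emph{any} Artin local $k$-algebra, and the pairing $\langle H, W_{R'} \rangle$ naturally lies in $R \otimes_k R'$, which is again an Artin local $k$-algebra, so "$H$ pairs to zero with every $W \in \cW(\widetilde\tau,\psi)$" must be interpreted in $R\otimes_k R'$ — and this is exactly the form of the hypothesis after base-changing $H$ along $R \to R\otimes_k R'$.

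Finally I would assemble the argument: assume $H \neq 0$; apply Corollary~\ref{bigvanishinglemmaarbitrary} to get $e$, $W$ with $0 \neq \langle H, W\rangle \in R\otimes_{W(k)} e\cZ_t$; reduce mod $\ell$ and specialize $e\cZ_t \to R'$ as above so that $0 \neq \langle H, W_{R'}\rangle \in R \otimes_k R'$, where $W_{R'}$ is the image of $W$ in $\cW(e\fW_t \otimes_{e\cZ_t} R', \psi)$; by Theorem~\ref{universalcowhitt} and Corollary~\ref{nilpotentliftcowhitt}, $\cW(e\fW_t\otimes_{e\cZ_t} R', \psi)$ is the Whittaker model of a nilpotent lift $\widetilde\tau \in \mathcal A^{\gen}_{\nil}(t)$, so $W_{R'} \in \cW(\widetilde\tau,\psi)$; base-changing the hypothesis along $R\to R\otimes_k R'$ forces $\langle H, W_{R'}\rangle = 0$, a contradiction. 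Hence $H = 0$. The one technical point needing care beyond the specialization is the sign/twist bookkeeping between $\psi$ and $\psi^{-1}$ in Corollary~\ref{bigvanishinglemmaarbitrary} versus the statement here, which is handled by the involution $W \mapsto \widetilde W$ recalled at the end of Section~\ref{section:cowhittaker}, together with the fact that $\widetilde\tau \mapsto \widetilde{\widetilde\tau}$ permutes $\mathcal A^{\gen}_{\nil}(t)$.
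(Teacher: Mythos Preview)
Your overall strategy matches the paper's: start from Corollary~\ref{bigvanishinglemmaarbitrary} to obtain a nonzero pairing $\langle H, W\rangle$ in $R \otimes_{W(k)} e\cZ_t$, then specialize to an Artin local $k$-algebra to produce a nilpotent lift witnessing $\langle H, -\rangle \neq 0$. The genuine gap is in your step~(ii): you justify the specialization by asserting that $e\cZ_t/\ell$ is reduced, so that $\bigcap_{\fm, N} \fm^N$ equals the nilradical and hence vanishes. But $e\cZ_t/\ell$ is \emph{not} reduced in general---the paper says so explicitly in the discussion preceding Lemma~\ref{powerofmaximalideal} (``$e\cZ\otimes k$ has many nilpotents in general''), and Example~\ref{lpowerrootsbernstein} exhibits this concretely. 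Your justification therefore fails precisely at the point the paper flags as the crux.

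The paper's remedy is Lemma~\ref{powerofmaximalideal}: in \emph{any} Noetherian ring $S$, one has $\bigcap_{\fm \text{ max'l}} \bigcap_{i\geq 1} \fm^i = 0$ (a Krull--Nakayama argument, no reducedness needed). The paper applies this directly to $S = e\cZ_t \otimes_{W(k)} R$ and $x = \langle H, W\rangle$: there is a maximal ideal $\fm\subset S$ and an integer $i$ with $x$ nonzero in $S/\fm^i$, and $S/\fm^i$ is an Artin local $k$-algebra since $S$ is finitely generated over $k$. This is also organizationally cleaner than your plan of specializing only the $e\cZ_t$-factor and then tracking the pairing in $R\otimes_k R'$: taking the quotient of the full tensor product handles both factors at once, and the resulting $S/\fm^i$ is already the target ring for the nilpotent lift $e\fW_t\otimes_{e\cZ_t}(S/\fm^i)$. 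Your variant can be repaired by replacing the false reducedness claim with Lemma~\ref{powerofmaximalideal} applied to $e\cZ_t/\ell$, together with flatness of $R\otimes_k(-)$ to push the vanishing intersection through the tensor product; but the paper's one-step application to $S$ avoids that bookkeeping.
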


We state the following special case of Theorem~\ref{vanishingthm}, where $H$ takes values in $k$.

\begin{cor}\label{corollaryoverk}
Let $H$ be an element of $\cInd_{U_t}^{G_t}\psi_{k}$. Then the following implication holds:
if $\langle H,W\rangle =0$, for all $W\in\cW(\widetilde{\tau},\psi)$, for all $\widetilde{\tau}\in \mathcal{A}^{\gen}_{\nil}(t)$, then $H=0$.
\end{cor}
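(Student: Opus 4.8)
The plan is to deduce Corollary~\ref{corollaryoverk} as the special case $R = k$ of Theorem~\ref{vanishingthm}. First I would observe that $k$ is itself an Artin local $k$-algebra: it is finite-dimensional over $k$, local with maximal ideal (and nilradical) $(0)$, has residue field $k$, and the structural composition $k \to k \to k$ is the identity. Hence Theorem~\ref{vanishingthm} applies verbatim to this choice of coefficient ring.

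Next I would match up the data. With $R = k$, the module $\cInd_{U_t}^{G_t}\psi_R$ appearing in Theorem~\ref{vanishingthm} is exactly $\cInd_{U_t}^{G_t}\psi_k$, so the hypothesis that $H$ be an element of $\cInd_{U_t}^{G_t}\psi_k$ is precisely the hypothesis of Theorem~\ref{vanishingthm}. The family of test functions is also identical: one ranges over all $\widetilde{\tau} \in \mathcal{A}^{\gen}_{\nil}(t)$ — with coefficients in an arbitrary Artin local $k$-algebra $R'$ — and all $W \in \cW(\widetilde{\tau}, \psi)$, and when $H$ takes values in $k$ the pairing $\langle H, W\rangle = \int_{U_t\backslash G_t} H(x)\otimes W(x)\,dx$ lands in $k \otimes_k R' = R'$, consistently with Theorem~\ref{vanishingthm}. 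Thus the hypothesis of Corollary~\ref{corollaryoverk} is an instance of the hypothesis of Theorem~\ref{vanishingthm} with $R = k$, and the theorem's conclusion yields $H = 0$.

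There is no substantive obstacle, since all the content resides in Theorem~\ref{vanishingthm}; the corollary merely records the most concrete consequence. The one point I would make explicit is that the nilpotent lifts $\widetilde{\tau}$ are permitted to have coefficients in arbitrary Artin local $k$-algebras — this generality is already part of the statement of Theorem~\ref{vanishingthm} — so fixing the coefficient ring of $H$ to be the residue field $k$ in no way shrinks the supply of Whittaker functions $W$ against which $H$ is paired, and the implication goes through unchanged.
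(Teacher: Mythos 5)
Your proposal is correct and matches the paper's intent exactly: the paper introduces Corollary~\ref{corollaryoverk} as nothing more than the special case $R=k$ of Theorem~\ref{vanishingthm}, and your observation that $k$ is itself an Artin local $k$-algebra (so the theorem applies verbatim) is the whole argument. Your remark that the test representations $\widetilde{\tau}$ still range over nilpotent lifts with coefficients in \emph{arbitrary} Artin local $k$-algebras, independently of the coefficient ring of $H$, is the right point to make explicit.
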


The proof of Theorem~\ref{vanishingthm} uses a simple geometric argument on the mod-$\ell$ Bernstein center, which has a subtlety arising from the presence of nilpotents. Consider, for the sake of illustration, the special case where $H$ takes values in $k$. To prove Theorem~\ref{vanishingthm}, we will prove the contrapositive. If $H\neq 0$, consider the Whittaker function $W$ guaranteed by Lemma~\ref{bigvanishinglemmaarbitrary}, so that $\langle H,W\rangle$ is nonzero in $e\cZ\otimes k$. If we knew that $\langle H,W\rangle$ were non-nilpotent, the principal open set $D(\langle H, W\rangle)$ would be nonempty, and would necessarily intersect the dense set of closed points, in other words there is a map $e\cZ\otimes k\to k$ that does not kill $\langle H,W\rangle $. However, $e\cZ\otimes k$ has many nilpotents in general, and $\langle H, W\rangle$ may be among them. Thus, instead of closed points, we instead look for points $e\cZ\otimes k \to R$ to non-reduced finite-dimensional $k$-algebras $R$ in which the image of $\langle H, W\rangle$ is nonzero.

The author is grateful to David Helm for communicating the following lemma.
\begin{lemma}\label{powerofmaximalideal}
Let $S$ be a Noetherian ring, and let $x$ be a nonzero element of $S$. Then there exists a maximal ideal $\fm$ of $S$ and an integer $i$ such that $x$ is not in $\fm^i$.
\end{lemma}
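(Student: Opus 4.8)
The plan is to prove the contrapositive by localizing at a suitable maximal ideal and reducing to a statement about local rings. Suppose $x \neq 0$ in the Noetherian ring $S$. By the Krull intersection theorem, for each maximal (or prime) ideal $\fm$ of $S$ one has $\bigcap_{i \geq 0} \fm^i S_{\fm} = 0$ in the localization $S_{\fm}$, and more usefully $\bigcap_{i \geq 0} \fm^i = \ker(S \to S_{\fm}\text{-completion})$ need not vanish globally — so the naive hope that $\bigcap_i \fm^i = 0$ in $S$ itself can fail. The correct move is therefore to first pass to a localization where $x$ survives, then apply Krull intersection there.

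First I would choose a maximal ideal $\fm$ in the support of the principal ideal $(x)$, i.e. a maximal ideal such that the image of $x$ in $S_{\fm}$ is nonzero; such $\fm$ exists because $(x) \neq 0$ is a nonzero module over the Noetherian (hence Jacobson-free but still) ring $S$, and a nonzero finitely generated module has nonempty support meeting $\operatorname{Spec} S$, while any nonzero ideal has a point in its support that can be taken maximal after possibly enlarging — more carefully, $\operatorname{Ann}(x) \neq S$ is a proper ideal, so it is contained in some maximal ideal $\fm$, and then $x \neq 0$ in $S_{\fm}$ (if $sx = 0$ with $s \notin \fm$ then $s \in \operatorname{Ann}(x) \subseteq \fm$, a contradiction). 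Second, in the Noetherian local ring $(S_{\fm}, \fm S_{\fm})$, the Krull intersection theorem gives $\bigcap_{i \geq 1} (\fm S_{\fm})^i = 0$, so since $x \neq 0$ in $S_{\fm}$ there is an integer $i$ with $x \notin (\fm S_{\fm})^i$. Third, I would descend this back to $S$: if $x$ were in $\fm^i$, then its image would lie in $(\fm S_{\fm})^i = (\fm^i) S_{\fm}$, contradicting the previous step. Hence $x \notin \fm^i$ in $S$, as claimed.

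The main obstacle — really the only subtle point — is the very first step: one must not assume $S$ is Jacobson or that closed points are dense, and one must resist the temptation to apply Krull intersection directly to $S$ (which is false in general, e.g. $S = k[x]_{(x)} \times k$ has $\bigcap_i \fm^i \neq 0$ for an appropriate $\fm$, or more simply non-Noetherian-looking obstructions vanish but global Krull intersection still fails for disconnected or non-domain $S$). The localization $S \to S_{\fm}$ is what makes Krull intersection applicable, and the only thing to verify is that one can pick $\fm$ so that $x$ does not die in $S_{\fm}$, which is exactly the statement that $\operatorname{Ann}_S(x)$ is a proper ideal — true since $1 \cdot x = x \neq 0$. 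Everything else is a routine compatibility of powers of ideals with localization.
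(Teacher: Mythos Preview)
Your proof is correct and takes essentially the same approach as the paper: both localize at a maximal ideal and invoke Krull intersection/Nakayama in the Noetherian local ring $S_{\fm}$ to conclude. The only organizational difference is that you select a single $\fm \supseteq \operatorname{Ann}(x)$ at the outset and work there, whereas the paper phrases things as showing the module $M=\bigcap_{\fm}\bigcap_i \fm^i$ has $M_{\fm}=0$ for every maximal $\fm$; your version is slightly more direct but the substance is identical.
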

\begin{proof}
We would like to show that $M = \bigcap_{\fm\text{ max'l}}\left(\bigcap_{i\geq 1}\fm^i\right)$ is zero. For a fixed $\fm$, there is an inclusion $M\hookrightarrow \bigcap_{i}\fm^i$, $i\geq 1$. The $S$-module $\bigcap_{i}\fm^i$ is finitely generated over $S$ and satisfies $$\fm\cdot\left(\bigcap_i\fm^i\right)=\bigcap_i\fm^i.$$ The same is true of its localization $\left(\bigcap_i\fm^i\right)_{\fm}$ at the ideal $\fm$. It follows by Nakayama that the localization $\left(\bigcap_i\fm^i\right)_{\fm}$ is zero. But the map on localizations $M_{\fm}\to \left(\bigcap_i\fm^i\right)_{\fm}$ remains injective, and we conclude that $M_{\fm}=0$.
\end{proof}

\begin{proof}[Proof of Thm~\ref{vanishingthm}]
Take $S$ in Lemma~\ref{powerofmaximalideal} to be $e\cZ\otimes_{W(k)}R$, and take $x\in S$ to be the element $\langle H,W\rangle$, where $W$ is the Whittaker function guaranteed by Lemma~\ref{bigvanishinglemmaarbitrary}. In particular, $x$ is nonzero. By Lemma~\ref{powerofmaximalideal}, there is a maximal ideal $\fm$ of $S$ such that $x$ is not killed in the map $S\to S/\fm^i$. 

The ring $S/\fm^i$ has a single prime ideal, $\fm/\fm^i$, which is the nilradical of $S/\fm^i$. Since $S$ is a finitely generated $k$-algebra, $S/\fm = k$. The successive quotients of the filtration $S/\fm^i \supset \fm/\fm^i\supset \dots \supset \fm^i/\fm^i = 0$ are one-dimensional $k$-vector spaces, therefore $S/\fm^i$ has length $i$ as a module over itself. In particular $S/\fm^i$ is an Artinian local $k$-algebra.

Let $\phi:S\to S/\fm^i$ be the quotient map, let $f$ be the composition $e\cZ\to S \to S/\fm^i$ and let $g$ be the composition $R\to S\to S/\fm^i$. Then $\langle H, g\circ W\rangle$ must be nonzero in $(S/\fm^i)\otimes_{W(k)}R$, since otherwise its image $$\langle f\circ H, g\circ W\rangle = \phi(\langle H,W\rangle)$$ in $S/\fm^i$ would have to be zero, contrary to our construction of $\phi$. The Theorem now follows since $g\circ W$ is a Whittaker function in the $S/\fm^i$-valued Whittaker space of $\widetilde{\tau}:=e\fW\otimes_{e\cZ,f}(S/\fm^i)$, which is a co-Whittaker $(S/\fm^i)[G]$-module (c.f. Corollary~\ref{nilpotentliftcowhitt}).
\end{proof}

\section{Rankin--Selberg theory and gamma factors}
\label{section:gamma}

This section closely follows \cite{moss2}, and the reader should refer there for more details. Let $A$ and $B$ be Noetherian $W(k)$-algebras and let $R = A\otimes_{W(k)}B$. Let $V$ and $V'$ be co-Whittaker $A[G_n]$- and $B[G_t]$-modules respectively, where $t<n$, with central characters $\omega_V$, $\omega_{V'}$. For $W\in \cW(V,\psi)$ and $W'\in \cW(V', \psi^{-1})$, 
and for $0\leq j \leq n-t-1$, we define the formal series in the variable $X$ with coefficients in $R$: 
\begin{align*}
& \ \Psi(W,W',X;j)\\
:= & \ \sum_{r\in \ZZ}\int_{M_{j,t}(F)}\int_{U_t\backslash\{g\in G_t:v(\det g)=r\}}                    \left(W\left(\begin{smallmatrix}g &&\\x&I_j &\\&&I_{n-t-j} \end{smallmatrix}\right)\otimes W'(g)\right)X^rdg dx\,.
\end{align*}

It is straightforward to show that there are finitely many negative-power terms, so $\Psi(W,W',X):=\Psi(W,W',X;0)$ defines an element of $R[[X]][X^{-1}]$. 

Let $S$ be the multiplicative system in $R[X, X^{-1}]$ consisting of all polynomials whose leading and trailing coefficients are units.

\begin{thm}[\cite{moss2}]
\label{fnleqn}
\begin{enumerate}
\item Both $\Psi(W,W',X)$ and $\Psi(W,W',X;j)$ are elements in $S^{-1}(R[X, X^{-1}])$. 

\item There exists a unique element $\gamma (V \times V', X, \psi) \in S^{-1}(R[X, X^{-1}])$ such that 
\begin{align*}
& \ \Psi(W, W', X; j) \gamma(V \times V', X, \psi) \omega_{V'}(-1)^{n-1} 
\\
= & \ \Psi(\omega_{n,t}\widetilde{W}, \widetilde{W'}, \frac{q^{n-t-1}}{X}; n-t-1-j)\,,
\end{align*}
for any $W \in \cW(V,\psi)$, $W' \in \cW(V', \psi)$ and for any $0 \leq j \leq n-t-1$. 
\end{enumerate}
\end{thm}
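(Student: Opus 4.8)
The plan is to follow the strategy of \cite{moss2}, which transplants the Rankin--Selberg method of \cite{jps1} to $R$-coefficients; the two substantive changes from the classical argument are that ``finite length'' reductions get replaced by Noetherian ones, and that ``uniqueness of an invariant functional over a field'' gets replaced by ``freeness of rank one over $S^{-1}(R[X^{\pm 1}])$''. Throughout one uses that $\Psi$ depends only on the Whittaker models, so by Theorem~\ref{universalcowhitt} it is enough to treat the universal case $V=e\fW_n$, $V'=e'\fW_t$ over the Noetherian rings $A=e\cZ_n$, $B=e'\cZ_t$, and then base change.

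For part (1), I would first note (as the text does) that $\Psi(W,W',X;j)\in R((X))$ because the $x$-integral is over a compact set and $W$ is smooth, so only finitely many negative powers of $X$ occur. To upgrade this to rationality with denominator in $S$, I would restrict $W$ along the mirabolic subgroup $P_n\subset G_n$ and invoke the $R$-coefficient analogue of the Bernstein--Zelevinsky filtration on $\cW(V,\psi)|_{P_n}$ (available because $(-)^{(n)}$ is exact and commutes with base change, and $V^{(n)}$ is free of rank one). This splits the $r$-sum into a ``leading'' piece, supported where $v(\det g)$ is small, which is literally a geometric series and thus contributes a denominator of the form $1-u X^{d}$ with $u\in R^{\times}$ (an element of $S$), plus finitely many lower terms coming from the derivatives, which are polynomials in $X^{\pm 1}$. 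The same analysis with the blocks $M_{j,t}$ inserted handles $\Psi(W,W',X;j)$, so both lie in $S^{-1}(R[X^{\pm1}])$.

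For part (2) I would establish the multiplicity-one statement: the $S^{-1}(R[X^{\pm1}])$-module of bilinear forms $B\colon\cW(V,\psi)\times\cW(V',\psi^{-1})\to S^{-1}(R[X^{\pm1}])$ that transform under the diagonally embedded $G_t$ against the character $g\mapsto X^{v(\det g)}$ (suitably normalized by $\omega_{V'}$) is free of rank one. As over a field, this comes from a Bruhat-cell analysis of the relevant double cosets for $G_t$ acting on flags in $G_n$, together with Frobenius reciprocity and the freeness of $V^{(n)}$ and $(V')^{(t)}$ of rank one, the Noetherian hypothesis keeping the coinvariant computations under control. Granting it, both $(W,W')\mapsto\Psi(W,W',X;j)$ and $(W,W')\mapsto\Psi(\omega_{n,t}\widetilde W,\widetilde{W'},q^{n-t-1}/X;n-t-1-j)\,\omega_{V'}(-1)^{n-1}$ are such forms --- the equivariance of the second uses the concrete $\iota$-duality recipe $\widetilde W(g)=W(\omega_n g^{\iota})$ recalled in \S\ref{section:cowhittaker} --- so they differ by a scalar $\gamma(V\times V',X,\psi)$ in the total ring of fractions. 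To see this scalar is genuinely in $S^{-1}(R[X^{\pm1}])$, and to get uniqueness, I would exhibit a pair $(W_0,W_0')$ with $W_0'(1)$ a generator of $(V')^{(t)}\cong B$ and $W_0$ a small-support bump function, for which $\Psi(W_0,W_0',X;0)$ is a single monomial $cX^m$ with $c\in R^{\times}$; then $\gamma$ equals an explicit quotient of two elements of $S^{-1}(R[X^{\pm1}])$ by a unit, hence lies there, and any other element satisfying the relation would have to agree with it after multiplying by that unit. Finally, the independence of $j$ is a consequence rather than a hypothesis: a standard unipotent-averaging / ``exchange of the $M_{j,t}$-blocks'' argument shows every $\Psi(W,W',X;j)$ obeys the functional equation with the \emph{same} $\gamma$.

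The hard part will be the rank-one freeness of the space of equivariant bilinear forms over $S^{-1}(R[X^{\pm1}])$ --- the $R$-coefficient replacement for uniqueness of the Rankin--Selberg functional equation --- since over a general Noetherian $W(k)$-algebra one cannot lean on irreducibility or finite length and must instead bound each Bruhat-cell contribution integrally; the explicit unit computation with $(W_0,W_0')$ is precisely what is needed to guarantee that $\gamma$ does not pick up spurious denominators outside $S$.
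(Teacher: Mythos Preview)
The paper does not prove this theorem at all: it is stated with attribution to \cite{moss2} and no argument is given here, since the result is quoted as an input from the author's earlier work. Your outline is a faithful sketch of the strategy in \cite{moss2} (reduction to the universal co-Whittaker module, rationality via the $R$-coefficient Bernstein--Zelevinsky filtration, and rank-one freeness of the space of equivariant bilinear forms over $S^{-1}(R[X^{\pm1}])$ in place of multiplicity one), so there is nothing to compare against in the present paper.
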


\begin{prop}[\cite{LM} Prop 3.3]\label{centralcharacter}
Let $A$ be a Noetherian $W(k)$-algebra, let $\pi_1, \pi_2$ be co-Whittaker $A[G_n]$-modules with $n\geq 2$. Assume that 
$$\gamma(\pi_1\times \chi,X,\psi)=\gamma(\pi_2 \times \chi,X,\psi)\,,$$
for any character $\chi:F^{\times}\to W(k)^{\times}$. 
Then $\omega_{\pi_1} = \omega_{\pi_2}$. 
\end{prop}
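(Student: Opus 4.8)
The plan is to recover the central character from the leading behavior of the $\Psi$-integrals appearing in the functional equation of Theorem~\ref{fnleqn}, specializing to the case $t=1$, where $V' = \chi$ is a character of $G_1 = F^\times$. The key observation is that for a character $\chi$, the Whittaker space $\cW(\chi,\psi^{-1})$ is just $\chi$ itself (up to scalar), so the integrals $\Psi(W,W',X;j)$ become integrals of a single Whittaker function $W \in \cW(\pi_i,\psi)$ against $\chi$, twisted by powers of $X$ recording the valuation of the determinant. The central character enters through the identity $W\!\left(\begin{smallmatrix} zg & & \\ x & I_j & \\ & & I_{n-1-j}\end{smallmatrix}\right) = \omega_{\pi_i}(z)\, \psi(\text{something})\cdots$ — more precisely, scaling $g$ by a central element $z\in F^\times$ of $G_1$ shifts $v(\det g)$ and multiplies $W$ by $\omega_{\pi_i}(z)\chi(z)$, so that $\Psi(W, \chi, X; j)$ transforms in a controlled way under $\chi \mapsto \chi\nu$ for $\nu$ unramified.

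Concretely, first I would fix a uniformizer $\varpi$ and an unramified character $\nu$, and compare $\gamma(\pi_i \times \chi\nu, X, \psi)$ with $\gamma(\pi_i\times\chi, X,\psi)$: twisting $\chi$ by $\nu$ with $\nu(\varpi) = u$ has the effect of substituting $X \mapsto uX$ (up to a unit factor depending on $u$ and $\omega_{\pi_i}$) in the local integrals, because the only place the valuation $r = v(\det g)$ with the extra $X^r$ interacts with $\nu$ is through $\nu(\det g)$ for $g \in G_1$, i.e. $\nu(g) = u^{v(g)}$. Running this through the functional equation, the right-hand side $\Psi(\omega_{n,1}\widetilde W, \widetilde{\chi\nu}, q^{n-2}/X; \cdots)$ picks up a factor involving $\omega_{\pi_i}$ evaluated at $\varpi$ via the contragredient/$\iota$-twist. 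The upshot is a formula of the shape $\gamma(\pi_i\times\chi\nu,X,\psi) = (\text{unit depending only on }\nu)\cdot\omega_{\pi_i}(\varpi)^{?}\cdot\gamma(\pi_i\times\chi, uX, \psi)$, where the exponent and unit are explicit and independent of $i$. Since $\gamma(\pi_1\times\chi\nu,X,\psi) = \gamma(\pi_2\times\chi\nu,X,\psi)$ and $\gamma(\pi_1\times\chi, uX,\psi) = \gamma(\pi_2\times\chi, uX,\psi)$ (both by hypothesis, the latter after the substitution $X\mapsto uX$ which is invertible on $S^{-1}(R[X,X^{-1}])$), we deduce $\omega_{\pi_1}(\varpi) = \omega_{\pi_2}(\varpi)$.

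Next I would handle the units $z\in\cO_F^\times$. For these, $v(\det z) = 0$ (in $G_1$), so twisting the central character by a ramified character supported on $\cO_F^\times$ does not interact with the $X^r$ grading; instead I would compare $\gamma(\pi_i\times\chi,X,\psi)$ for $\chi$ ranging over all characters, and extract $\omega_{\pi_i}|_{\cO_F^\times}$ from the dependence on the ramified part of $\chi$ — specifically, the functional equation forces $\gamma(\pi_i\times\chi,X,\psi)$ to contain a factor $\omega_{\pi_i}(-1)^{n-1}$ explicitly, and more refined information about $\omega_{\pi_i}$ on units comes from comparing $\chi$ and $\chi\eta$ for $\eta$ ramified, using that $\Psi(W,\chi\eta,X;j)$ versus $\Psi(W,\chi,X;j)$ differ by the action of the element realizing $\eta$ on the Kirillov/Whittaker model, whose determinant-valuation-zero part sees $\omega_{\pi_i}|_{\cO_F^\times}$. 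Since $F^\times$ is generated by $\varpi$ and $\cO_F^\times$, combining the two cases gives $\omega_{\pi_1} = \omega_{\pi_2}$.

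The main obstacle I anticipate is making the ``unit factor'' in the twisting identity genuinely explicit and independent of $i$ — i.e. showing that when one substitutes $X\mapsto uX$ and twists $\chi$ by an unramified $\nu$, the discrepancy between $\gamma(\pi_i\times\chi\nu,X,\psi)$ and $\gamma(\pi_i\times\chi,uX,\psi)$ depends only on $\nu$, $\psi$, $n$, and $\omega_{\pi_i}(\varpi)$, with the $\omega_{\pi_i}(\varpi)$-dependence occurring to a fixed nonzero power (so it can be solved for). This requires carefully tracking how the contragredient-type operation $W\mapsto\widetilde W$ on $\cW(\pi_i,\psi)$ and the reflection $X\mapsto q^{n-t-1}/X$ interact with the central twist; over $\CC$ this is classical bookkeeping, but here one must ensure every manipulation is valid in the localized ring $S^{-1}(R[X,X^{-1}])$ and uses only the uniqueness in Theorem~\ref{fnleqn}(2), not any archimedean input. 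A cleaner alternative, which I would pursue if the direct computation gets unwieldy, is to reduce to the case where $R = A$ is $\ell$-torsion free and reduced (or even a domain) by a standard argument — co-Whittaker modules embed into such after base change along $e\cZ_n \hookrightarrow$ its total ring of fractions, and the two sides are determined by their images — and then invoke or adapt the known $\cKb$-result \cite[Prop 3.3]{LM} fiberwise, though since \cite{LM} already proves exactly this statement, the honest task here is really just to cite it; if instead the intent is to reprove it, the outline above is the route.
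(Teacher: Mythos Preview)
The paper does not prove this proposition; it is quoted from \cite{LM}, as your final paragraph correctly recognizes. So there is nothing in the paper to compare against beyond the citation itself. That said, the argument you sketch before falling back on the citation has a real gap.

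Your unramified-twist idea cannot work. A direct check with Theorem~\ref{fnleqn} shows that replacing $\chi$ by $\chi\nu$ with $\nu$ unramified and $\nu(\varpi)=u$ gives $\Psi(W,\chi\nu,X;j)=\Psi(W,\chi,uX;j)$ on the left and likewise $\Psi(\omega_{n,1}\widetilde W,\widetilde{\chi\nu},q^{n-2}/X;\cdots)=\Psi(\omega_{n,1}\widetilde W,\widetilde{\chi},q^{n-2}/(uX);\cdots)$ on the right; since $\nu(-1)=1$, the two functional equations match exactly under $X\mapsto uX$, and one obtains
\[
\gamma(\pi_i\times\chi\nu,X,\psi)=\gamma(\pi_i\times\chi,uX,\psi)
\]
with \emph{no} factor involving $\omega_{\pi_i}$. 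There is nothing to solve for, so $\omega_{\pi_i}(\varpi)$ cannot be extracted this way. Your second step also misreads the functional equation: the sign $\omega_{V'}(-1)^{n-1}$ is $\chi(-1)^{n-1}$, not $\omega_{\pi_i}(-1)^{n-1}$, and the rest of that paragraph does not supply an actual mechanism by which $\omega_{\pi_i}|_{\cO_F^\times}$ would appear from comparing $\chi$ and $\chi\eta$.

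The standard route --- and the one taken in \cite{LM} --- is instead to take $\chi$ \emph{sufficiently ramified} relative to $\pi_i$ and compute both sides of the functional equation explicitly for a Whittaker function chosen to be supported near the identity in the Kirillov model. The resulting closed formula for $\gamma(\pi_i\times\chi,X,\psi)$ involves $\omega_{\pi_i}$ evaluated at an explicit element of $F^\times$ determined by $\chi$ and $\psi$ (this is the stability phenomenon for highly ramified twists). Varying $\chi$ over characters of increasing level then determines $\omega_{\pi_i}$ on all of $F^\times$.
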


\section{The mod-$\ell$ converse theorem}

In this section we deduce the following mod-$\ell$ converse theorem.

\begin{thm}
\label{conversefull}
Let $R$ be an Artin local $k$-algebra, let $n\geq 2$, and let $\widetilde{\pi_1}$, $\widetilde{\pi_2}$ be in $\mathcal{A}_{R}^{\gen}(n)$.
If $$\gamma(\widetilde{\pi_1}\times\widetilde{\tau},X,\psi) = \gamma(\widetilde{\pi_2}\times\widetilde{\tau},X,\psi)$$ for all nilpotent lifts $\widetilde{\tau}\in \mathcal{A}_{\nil}^{\gen}(t)$, for all $1\leq t\leq \lfloor \frac{n}{2}\rfloor$, then $\widetilde{\pi_1}=\widetilde{\pi_2}$.
\end{thm}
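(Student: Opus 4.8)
The plan is to run the family version of the Jacquet conjecture argument from \cite{LM} (itself a refinement of the proofs of Chai and of Jacquet--Liu), replacing at the one decisive point the completeness-of-Whittaker-models input of \cite{LM} --- which relied on the coefficient ring being reduced and $\ell$-torsion free --- by Theorem~\ref{vanishingthm}, which has no such hypothesis. First, by Corollary~\ref{nilpotentliftcowhitt} each $\widetilde{\pi_i}$ is a co-Whittaker $R[G_n]$-module that coincides with its own Whittaker model $\cW(\widetilde{\pi_i},\psi)\subset\Ind_U^G\psi_R$, and by Lemma~\ref{equivalencecriterion} it is enough to produce a single function lying in $\cW(\widetilde{\pi_1},\psi)\cap\cW(\widetilde{\pi_2},\psi)$ that takes a unit value somewhere. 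Using the theory of Kirillov models for co-Whittaker modules (each $\cW(\widetilde{\pi_i},\psi)|_{P_n}$ contains $\cInd_{U_n}^{P_n}\psi_R$; cf.\ \cite{moss1,eh}) we fix $W_1\in\cW(\widetilde{\pi_1},\psi)$ and $W_2\in\cW(\widetilde{\pi_2},\psi)$ with $W_1|_{P_n}=W_2|_{P_n}$ and $W_i(1)\in R^\times$; it then suffices to show $W_1=W_2$ on all of $G_n$. We also record, by applying the case $t=1$ of the hypothesis (characters $F^\times\to R'^\times$ are exactly the nilpotent lifts of $k$-valued characters, and the gamma factor is compatible with the relevant base change) together with Proposition~\ref{centralcharacter}, that $\omega_{\widetilde{\pi_1}}=\omega_{\widetilde{\pi_2}}$.

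The mechanism is the following. For $1\le t\le\lfloor\frac n2\rfloor$ and $0\le j\le n-t-1$ the Rankin--Selberg integral $\Psi(W,W',X;j)$ of Theorem~\ref{fnleqn} depends on $W\in\cW(\widetilde{\pi_i},\psi)$ only through $W|_{P_n}$, since the matrix $\left(\begin{smallmatrix}g&&\\x&I_j&\\&&I_{n-t-j}\end{smallmatrix}\right)$ has last row $(0,\dots,0,1)$ as soon as $n-t-j\ge1$ and hence lies in $P_n$. Thus $\Psi(W_1,W',X;j)=\Psi(W_2,W',X;j)$ for all $W'$ and all such $j$. Combining this with the hypothesis $\gamma(\widetilde{\pi_1}\times\widetilde\tau,X,\psi)=\gamma(\widetilde{\pi_2}\times\widetilde\tau,X,\psi)$, the equality of central characters, and the functional equation of Theorem~\ref{fnleqn}(2), and specializing $j=n-t-1$ so that the dual integral involves no extra integration variable, we obtain
\[
\Psi\!\left(\omega_{n,t}\widetilde{W_1},\widetilde{W'},\tfrac{q^{n-t-1}}{X};0\right)=\Psi\!\left(\omega_{n,t}\widetilde{W_2},\widetilde{W'},\tfrac{q^{n-t-1}}{X};0\right)
\]
for every $W'\in\cW(\widetilde\tau,\psi^{-1})$, every $\widetilde\tau\in\mathcal{A}_{\nil}^{\gen}(t)$ and every $1\le t\le\lfloor\frac n2\rfloor$. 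Reading off the coefficient of each power of $X$, this says that the function $H$ on $G_t$ sending $g$ to $\bigl(\omega_{n,t}(\widetilde{W_1}-\widetilde{W_2})\bigr)\!\left(\begin{smallmatrix}g&\\&I_{n-t}\end{smallmatrix}\right)$ --- which lies in $\cInd_{U_t}^{G_t}\psi_R$ by the usual compact-support properties of (differences of) Whittaker functions attached to our choice of the $W_i$ --- satisfies $\langle H,W'\rangle=0$ for all $W'\in\cW(\widetilde\tau,\psi^{-1})$ and all $\widetilde\tau\in\mathcal{A}_{\nil}^{\gen}(t)$. The variant of Theorem~\ref{vanishingthm} with $\psi$ replaced by $\psi^{-1}$ then forces $H=0$. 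This is exactly the point at which Artin local $k$-algebra coefficients are indispensable: over $k$ the analogous vanishing statement fails (Section~\ref{vanishinglemmasgonewrong}), and this is what defeats the naive mod-$\ell$ converse theorem.

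The vanishing of all these functions $H$ translates, via $\widetilde{W_i}(g)=W_i(\omega_n g^\iota)$, into agreement of $W_1$ and $W_2$ on a subset of $G_n$ strictly larger than $P_n$ --- concretely, on the mirabolic double cosets reached from $P_n$ by the elements $\omega_{n,t}$ and the flip $\omega_n$ with $t\le\lfloor\frac n2\rfloor$. Because $\omega_n$ interchanges ``low'' and ``high'' mirabolic double cosets, the halfway range $t\le\lfloor\frac n2\rfloor$ is enough: feeding the newly obtained equalities back into the functional equation and iterating exactly as in the Chai/Jacquet--Liu induction over the finitely many classes of $P_n\backslash G_n/P_n$ (reproduced in \cite{LM}) yields $W_1=W_2$ on all of $G_n$. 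This common function lies in $\cW(\widetilde{\pi_1},\psi)\cap\cW(\widetilde{\pi_2},\psi)$ and has $W_1(1)\in R^\times$, so Lemma~\ref{equivalencecriterion} gives $\cW(\widetilde{\pi_1},\psi)=\cW(\widetilde{\pi_2},\psi)$ and therefore $\widetilde{\pi_1}=\widetilde{\pi_2}$.

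The main obstacle is not a new idea but the bookkeeping required to run this argument with coefficients in the Artin local $k$-algebra $R$: one has to verify that every ingredient other than Theorem~\ref{vanishingthm} --- the $R$-coefficient Rankin--Selberg functional equation, Proposition~\ref{centralcharacter}, the Kirillov-model and compact-support statements for co-Whittaker $R[G_n]$-modules, and the Bruhat-type combinatorics of $P_n\backslash G_n/P_n$ --- is valid over an arbitrary Noetherian $W(k)$-algebra, and in particular over $R$ (which is annihilated by $\ell$ and so lies outside the scope of \cite{LM}). These facts are available from \cite{moss1,moss2} and \cite{LM}, so the substitution of Theorem~\ref{vanishingthm} for \cite{LM}'s completeness lemma is the only essentially new step.
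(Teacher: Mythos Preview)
Your proposal is correct and follows exactly the approach the paper takes: the paper's proof simply says to run the argument of \cite[\S 4--6]{LM} mutatis mutandis, substituting Theorem~\ref{vanishingthm} for the completeness-of-Whittaker-models statement \cite[Thm~4.1]{LM}, and you have faithfully spelled out what that substitution entails (the Kirillov-model setup, Proposition~\ref{centralcharacter} for equal central characters, the functional-equation mechanism, and the Chai/Jacquet--Liu double-coset induction). There is no substantive difference in strategy---you have expanded the paper's one-sentence deferral to \cite{LM} into an explicit sketch.
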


Given $\pi_1$, $\pi_2$ in $\mathcal{A}_{k}^{\gen}(n)$, we can identify them with their trivial nilpotent lifts (i.e. their Whittaker models), which gives Theorem~\ref{conversemain} from the introduction.

Note that Proposition~\ref{centralcharacter} removes the need (when $n\geq 2$) for the hypothesis that $\widetilde{\pi_1}$ and $\widetilde{\pi_2}$ have the same central character.

\begin{proof}[Proof of Theorem~\ref{conversefull}]
The only part of the proof of the main theorem in \cite{LM} that requires reducedness and $\ell$-torsion free-ness is the completeness of Whittaker models statement \cite[Thm 4.1]{LM}. After replacing Theorem 4.1 in \cite{LM} with Theorem~\ref{vanishingthm} of the present paper, and applying the argument of \cite[\S 4-6]{LM} mutatis mutandis, we obtain Theorem~\ref{conversefull}.
\end{proof}

\begin{rmk}\label{rmk:restrictrings}
From the proof of Theorem~\ref{vanishingthm}, we see that in Theorems~\ref{vanishingthm} and ~\ref{conversefull} we can restrict the range of $\widetilde{\tau}$ to $\mathcal{A}_{R'}^{\gen}(t)$, for rings $R'$ specifically of the form $$(e\cZ\otimes_{W(k)}R)/\fm^i,$$ where $e$ is a primitive idempotent of $\cZ$, $\fm$ is a maximal ideal of $e\cZ\otimes_{W(k)}R$, and $i$ is a positive integer. Maybe it is possible to classify such rings.
\end{rmk}
\section{Deligne--Langlands gamma factors of nilpotent lifts}
If $\kappa$ is a field of characteristic zero containing $W(k)$, and $\rho: W_F \rightarrow GL_n(\kappa)$ is a Weil group representation,
then there is a rational function $\gamma(\rho, X, \psi)$ in $\kappa(X)$, called the Deligne--Langlands $\gamma$-factor
of $\rho$.  The main result of~\cite{galois_gamma} extends this construction to families of Weil group representations.  In particular, one has:
\begin{thm}[\cite{galois_gamma}, Theorem 1.1] \label{thm:local galois gamma}
Let $R$ be a Noetherian $W(k)$-algebra and let $\rho: W_F \rightarrow GL_n(R)$ be a representation that is
$\ell$-adically continuous in the sense of~\cite[\S 2]{galois_gamma}.
Then there exists an element $\gamma_R(\rho, X, \psi)$ of $S^{-1}R[X,X^{-1}]$ with the following properties:
\begin{enumerate}
\item If $f: R \rightarrow R'$ is a homomorphism of Noetherian $W(k)$-algebras, then one has:
$$f(\gamma_R(\rho, X, \psi)) = \gamma_{R'}(\rho \otimes_R R', X, \psi),$$
where we have extended $f$ to a map $S^{-1}R[X,X^{-1}] \rightarrow (S')^{-1}R'[X,X^{-1}]$ in the obvious way.
\item If $R$ is a field of characteristic zero, then $\gamma_R(\rho, X, \psi)$ coincides with the Deligne-Langlands
gamma factor $\gamma(\rho,X,\psi).$
\end{enumerate}
\end{thm}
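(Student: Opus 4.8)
The plan is to reduce the construction to a single ``universal'' family of Weil group representations and then transport the classical Deligne--Langlands formalism across it.

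\emph{Step 1: reduction to a universal representation.} Because $\ell\neq p$, $\ell$-adic continuity forces $\rho$ to be very constrained on inertia: the wild inertia $P_F\subset W_F$ and the prime-to-$\ell$ part of the tame quotient act through a common finite quotient, the remaining pro-$\ell$ tame generator contributes a ``monodromy'' operator $N$, and a fixed lift of Frobenius acts by an invertible matrix, all subject to the usual relations. Packaging this data gives, for each choice of finite inertial type, a moduli functor on Noetherian $W(k)$-algebras that is represented by a finite-type $W(k)$-algebra $R^{\mathrm{univ}}$ carrying a universal $\rho^{\mathrm{univ}}$; every $\rho$ as in the statement arises from some $\rho^{\mathrm{univ}}$ by base change along a map $R^{\mathrm{univ}}\to R$. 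It therefore suffices to construct $\gamma_{R^{\mathrm{univ}}}(\rho^{\mathrm{univ}},X,\psi)\in S^{-1}R^{\mathrm{univ}}[X,X^{-1}]$ and then \emph{define} $\gamma_R(\rho,X,\psi)$ to be its image under any presenting map; property~(1) then holds by construction, provided the answer is independent of the presentation chosen, which reduces to knowing that $R^{\mathrm{univ}}$ is reduced and $\ell$-torsion free (so that it embeds into a finite product of characteristic-zero fields, where the classical theory pins everything down).

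\emph{Step 2: the explicit formula over $R^{\mathrm{univ}}$.} I would write $\gamma = \varepsilon\cdot L(\rho^{\mathrm{univ},\vee})/L(\rho^{\mathrm{univ}})$ and check each factor already lies in $S^{-1}R^{\mathrm{univ}}[X,X^{-1}]$. The $\varepsilon$-factor is a monomial in $X$ whose coefficient is a product of Langlands $\lambda$-constants and Gauss sums attached to the (finite-image) ramified data; since $\ell\nmid q$, each such Gauss sum $g$ satisfies $g\cdot\bar g\in q\cdot R^{\mathrm{univ},\times}$ and is a unit, so the coefficient is a unit and $\varepsilon\in S^{-1}R^{\mathrm{univ}}[X,X^{-1}]$. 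For the $L$-factors one forms $\det\!\big(1-\Fr X\mid(\rho^{\mathrm{univ}})^{I_F,\,N=0}\big)$ and its dual analogue: over $R^{\mathrm{univ}}$ the relevant invariants-and-$N$-kernel submodule is a direct summand on which $\Fr$ acts invertibly, so these polynomials have unit constant and leading coefficients, hence lie in $S$, and the ratio lies in $S^{-1}R^{\mathrm{univ}}[X,X^{-1}]$. Multiplying gives $\gamma_{R^{\mathrm{univ}}}$.

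\emph{Step 3: comparison with Deligne--Langlands, giving property~(2).} For $R=\kappa$ a characteristic-zero field, a map $R^{\mathrm{univ}}\to\kappa$ exhibits $\rho=\rho^{\mathrm{univ}}\otimes\kappa$, and the image of the formula above is, by inspection, the known closed form of the Deligne--Langlands $\gamma$-factor of the Weil--Deligne representation $\rho$ in terms of its $\varepsilon$- and $L$-factors; alternatively, one checks directly that the formula is multiplicative in exact sequences and satisfies the inductivity identity $\gamma(\Ind_E^F\sigma,X,\psi)=\lambda(E/F,\psi)^{\dim\sigma}\gamma(\sigma,X,\psi_E)$, so that by Brauer induction it is determined by the abelian case, where it is Tate's local constant.

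\emph{Main obstacle.} The genuine difficulty is that $\rho\mapsto\rho^{I_F}$---hence the $L$-factor, and the Artin conductor and thus the $X$-degree of $\varepsilon$---does \emph{not} commute with base change: the inertia invariants can jump on special fibres. One therefore cannot define $\gamma$ on the generic fibre and merely ``spread it out''; one must work on $R^{\mathrm{univ}}$ itself, prove flatness of the relevant submodules there, and verify that the \emph{combination} $\varepsilon\cdot L(\rho^{\vee})/L(\rho)$ is insensitive to the jumping, i.e. that its specialization at every characteristic-zero point coincides with the honestly computed classical value. Establishing the needed good behaviour of $R^{\mathrm{univ}}$ (in particular its reducedness, $\ell$-torsion-freeness, and the direct-summand property used in Step~2) is the technical heart of the argument.
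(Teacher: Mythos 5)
First, a point of reference: the paper does not actually prove this statement --- it is imported verbatim from \cite{galois_gamma}, and the only argument supplied here is the remark immediately following the theorem, which records exactly your Step 1: every $\rho$ arises by base change from a universal representation over a reduced, $\ell$-torsion-free ring, over which property (2) determines the element uniquely. So your overall architecture (universal rings, embedding into a product of characteristic-zero fields, transport by base change) agrees with the intended one, and your identification of $L$-factor jumping as the central difficulty is also correct.

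The genuine gap is in Step 2. Over $R^{\mathrm{univ}}$ the submodule $(\rho^{\mathrm{univ}})^{I_F,\,N=0}$ is \emph{not} a direct summand, and its formation does not commute with base change: a family can have no inertia invariants at the generic points of its generic fibre while being unramified at special points, so the invariants genuinely jump and the "flatness of the relevant submodules" you propose to prove is simply not available. Consequently the termwise recipe $\gamma=\varepsilon\cdot L(\rho^{\vee})/L(\rho)$, with each $L$-factor computed from an invariants submodule over $R^{\mathrm{univ}}$, does not yield an element whose specializations are the classical gamma factors; indeed the failure of $L$-factors to interpolate is precisely why one works with gamma factors in the first place (compare the discussion of \cite{km_ll} in this paper). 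The construction in \cite{galois_gamma} runs in the opposite direction: one \emph{defines} $\gamma_{R^{\mathrm{univ}}}$ as the unique element of the total quotient ring of the reduced, $\ell$-torsion-free ring $R^{\mathrm{univ}}$ whose images at all characteristic-zero points are the classical Deligne--Langlands factors (this is where your Step 1 does the real work), and the substance of the theorem is the integrality assertion that this element lies in $S^{-1}R^{\mathrm{univ}}[X,X^{-1}]$, i.e.\ that its numerator and denominator can be taken with unit leading and trailing coefficients; this is established by separate arguments (the $\varepsilon$-factor analysis in the case of no inertia invariants, together with the functional equation relating $\gamma(\rho)$ and $\gamma(\rho^{\vee})$), not by exhibiting $L$ and $L^{\vee}$ individually over $R^{\mathrm{univ}}$. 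Your Step 3 is fine as a comparison at characteristic-zero points but does not repair Step 2.
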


Note that if $R$ is reduced and $\ell$-torsion free then the second property characterizes $\gamma_R(\rho,X,\psi)$ uniquely. But any $\rho$ arises by base change from some finite collection of ``universal'' representations $\rho_{\nu}$ over reduced $\ell$-torsion free rings $R_{\nu}$-- see \cite{galois_gamma} for more details. Thus the two properties of Theorem~\ref{thm:local galois gamma} uniquely characterize the association $\rho\mapsto \gamma(\rho,X,\psi)$.

An Artin local $k$-algebra $R$ is an $\ell$-adically complete and separated $W(k)$-algebra. Since a nilpotent lift $\rho\in\mathcal{G}_{R}(n)$ is smooth, it is $\ell$-adically continuous over $R$ in the sense of \cite[\S 2]{galois_gamma}. Therefore Theorem~\ref{thm:local galois gamma} applies to nilpotent lifts.

\section{Characterizing the mod-$\ell$ local Langlands correspondence with nilpotent gamma factors}

\begin{cor}\label{compatibilitywithgammafactors}
Suppose there exists, for every Artin local $k$-algebra $R$, a sequence of surjections
$$L_{R,n}:\mathcal{G}_{R}(n)\to \mathcal{A}_{R}^{\gen}(n),\ \ n\geq 1,$$ satisfying
\begin{enumerate}
\item $L_{R,1}$ is given by local class field theory,
\item For all $\rho$ in $\mathcal{G}_{R}(n)$, for all Artin local $k$-algebras $R'$, for all $\rho'\in\mathcal{G}_{R'}(t)$, for all $t<n$, we have the following equality in $(R\otimes R')[[X]][X^{-1}]$:
\begin{equation*}\label{gammaequalitynil}
\gamma(\rho\otimes\rho',X,\psi) = \gamma(L_{R,n}(\rho)\times L_{R',t}(\rho'),X,\psi).
\end{equation*}
\end{enumerate}
Then the sequence $(L_{R,n})_{n\geq 1}$ is uniquely determined for every $R$.
\end{cor}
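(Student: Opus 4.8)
The plan is to fix an Artin local $k$-algebra $R$ and an integer $n$, and to show that the value $L_{R,n}(\rho)$ is forced for each $\rho \in \mathcal{G}_R(n)$, proceeding by induction on $n$. The base case $n=1$ is immediate from hypothesis (1), since local class field theory pins down $L_{R,1}$ completely. For the inductive step, suppose that $L_{R',t}$ is uniquely determined for all Artin local $k$-algebras $R'$ and all $t < n$. Let $\rho, \rho'' \in \mathcal{G}_R(n)$ be two nilpotent lifts with $L_{R,n}(\rho) = \widetilde{\pi_1}$ and $L_{R,n}(\rho'') = \widetilde{\pi_2}$; I want to compare the gamma factors of $\widetilde{\pi_1}$ and $\widetilde{\pi_2}$ against all nilpotent lifts $\widetilde{\tau} \in \mathcal{A}_{\nil}^{\gen}(t)$ for $1 \le t \le \lfloor n/2 \rfloor$ and invoke Theorem~\ref{conversefull}.

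The key point is surjectivity: by hypothesis each $L_{R',t}$ is surjective, so every $\widetilde{\tau} \in \mathcal{A}_{R'}^{\gen}(t)$ is of the form $L_{R',t}(\rho')$ for some $\rho' \in \mathcal{G}_{R'}(t)$. Hence, using hypothesis (2) twice,
\begin{align*}
\gamma(\widetilde{\pi_1} \times \widetilde{\tau}, X, \psi) &= \gamma(L_{R,n}(\rho) \times L_{R',t}(\rho'), X, \psi) = \gamma(\rho \otimes \rho', X, \psi),\\
\gamma(\widetilde{\pi_2} \times \widetilde{\tau}, X, \psi) &= \gamma(L_{R,n}(\rho'') \times L_{R',t}(\rho'), X, \psi) = \gamma(\rho'' \otimes \rho', X, \psi).
\end{align*}
So if $\rho \cong \rho''$ as $R[W_F]$-modules, the right-hand sides agree for every such $\rho'$ and every $t \le \lfloor n/2 \rfloor$, and Theorem~\ref{conversefull} gives $\widetilde{\pi_1} = \widetilde{\pi_2}$. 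In other words, the map $L_{R,n}$ is forced to factor through isomorphism classes in a way that is uniquely determined by the gamma-factor identity — but this is automatic since $\mathcal{G}_R(n)$ is defined as a set of isomorphism classes. The real content is that any \emph{other} putative sequence $(L'_{R,n})$ satisfying (1) and (2) must agree with $(L_{R,n})$: applying the computation above with $\widetilde{\pi_1} = L_{R,n}(\rho)$ and $\widetilde{\pi_2} = L'_{R,n}(\rho)$ (and the \emph{same} $\rho$), hypothesis (2) for both sequences yields $\gamma(L_{R,n}(\rho) \times \widetilde{\tau}, X, \psi) = \gamma(\rho \otimes \rho', X, \psi) = \gamma(L'_{R,n}(\rho) \times \widetilde{\tau}, X, \psi)$ for all $\widetilde{\tau}$ in the relevant range, where $\widetilde\tau = L'_{R',t}(\rho')$ and we have used the inductive hypothesis that $L_{R',t} = L'_{R',t}$ for $t < n$ so that the twisting objects range over all of $\mathcal{A}^{\gen}_{R'}(t)$ regardless of which sequence we use. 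Theorem~\ref{conversefull} then gives $L_{R,n}(\rho) = L'_{R,n}(\rho)$.

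The main obstacle, and the reason the induction is set up carefully, is bookkeeping around the range $1 \le t \le \lfloor n/2 \rfloor$: to invoke Theorem~\ref{conversefull} I need \emph{every} nilpotent lift $\widetilde{\tau} \in \mathcal{A}^{\gen}_{\nil}(t)$ to arise as $L_{R',t}(\rho')$ for a suitable Artin local $k$-algebra $R'$, which is exactly the surjectivity of $L_{R',t}$ for $t \le \lfloor n/2 \rfloor < n$ — covered by the inductive hypothesis together with the assumed existence of the extensions. One should also check the compatibility of the gamma factors of Theorem~\ref{thm:local galois gamma} (Galois side) with base change along $R \otimes_k R' \to (R \otimes_k R')/\mathfrak{m}^i$ so that the identity in (2), stated in $(R \otimes_k R')[[X]][X^{-1}]$, can be compared termwise with the Rankin--Selberg gamma factors of Theorem~\ref{fnleqn}; this is the functoriality clause (1) of Theorem~\ref{thm:local galois gamma} and is routine. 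No circularity arises because the converse theorem input (Theorem~\ref{conversefull}) is about the automorphic side only and makes no reference to $L_{R,n}$.
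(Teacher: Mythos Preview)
Your argument is correct and follows the same approach as the paper: compare two candidate systems $(L_{R,n})$ and $(L'_{R,n})$, use condition (1) for $n=1$, and for $n\geq 2$ combine condition (2) with surjectivity to produce equal gamma factors against every $\widetilde{\tau}\in\mathcal{A}_{\nil}^{\gen}(t)$ and then invoke Theorem~\ref{conversefull}. Your version makes explicit the induction on $n$ needed to identify $L_{R',t}(\rho')$ with $L'_{R',t}(\rho')$ in the chain of equalities, which the paper leaves implicit; the initial digression about $\rho,\rho''$ and the remark about base change to $(R\otimes R')/\mathfrak{m}^i$ are unnecessary here but harmless.
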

\begin{proof}
Fix an Artin local $k$-algebra $R$. Suppose $(\tilde{L}_{R,n})_{n\geq 1}$ is another sequence of maps satisfying (1) and (2). We will show that $\tilde{L}_{R,n}=L_{R,n}$ for all $n$. 

The map $\tilde{L}_{R,1}$ is uniquely determined by condition (1), so that $\tilde{L}_{R,1} = L_{R,1}$.

Now suppose $n\geq 2$. Given $\rho\in \mathcal{G}_{R}(n)$, we have by condition (2) that
$$\gamma(\tilde{L}_{R,n}(\rho)\times L_{R',t}(\rho'),X,\psi) = \gamma(\rho\otimes\rho',X,\psi) = \gamma(L_{R,n}(\rho)\times L_{R',t}(\rho'),X,\psi)$$ for all Artin local $k$-algebras $R'$, for all $\rho'\in\mathcal{G}_{R',t}$, for all $t<n$. Since $L_{R',t}$ is surjective by assumption, $L_{R',t}(\rho')$ runs over all of $\mathcal{A}_{\nil}^{\gen}(t)$ as $\rho'$ and $R'$ vary. By Theorem~\ref{conversefull}, we conclude that $\tilde{L}_{R,n}(\rho)=L_{R,n}(\rho)$.
\end{proof}

\begin{proof}[Proof of Theorem~\ref{characterization}]
This follows immediately from Corollary~\ref{compatibilitywithgammafactors} taking $R=k$, since $\mathcal{G}_k(n) = \mathcal{G}^{ss}_k(n)$. Note that $L_{k,n}^{\gen}$ is thus the restriction to $\mathcal{G}_k^{ss}(n)\subset\mathcal{G}_R(n)$ of any of the maps $L_{R,n}$ appearing in the system of maps of Corollary~\ref{compatibilitywithgammafactors}, for any $R$.
\end{proof}

\bibliography{mybibliography}{}

\begin{thebibliography}{ALST18}

\bibitem[ALST18]{sharpness}
Moshe Adrian, Baiying Liu, Shaun Stevens, and Geo Kam-Fai Tam.
\newblock On the sharpness of the bound for the local converse theorem of
  {$p$}-adic {$GL_{\text{prime}}$}.
\newblock {\em Proceedings of the AMS, Series B}, 5:6--17, 2018.

\bibitem[BH06]{bh}
Colin Bushnell and Guy Henniart.
\newblock {\em The Local {Langlands} Conjecture for {$GL(2)$}}.
\newblock Berlin Heidelberg: Springer-Verlag, 2006.

\bibitem[Cha19]{chai_converse}
Jingsong Chai.
\newblock Bessel functions and local converse conjecture of jacquet.
\newblock {\em Journal of the EMS}, 21:1703–1728, 2019.

\bibitem[Del73]{deligne72}
Pierre Deligne.
\newblock {\em Les constantes des \'{e}quations fonctionelles des fonctions
  {$L$}}.
\newblock Spring-Verlag, 1973.

\bibitem[EH14]{eh}
Matthew Emerton and David Helm.
\newblock The local {Langlands} correspondence for {$GL(n)$} in families.
\newblock {\em Ann. Sci. E.N.S.}, 2014.

\bibitem[Hel16a]{h_bern}
David Helm.
\newblock The {Bernstein} center of the category of smooth
  {$W(k)[GLn(F)]$}-modules.
\newblock {\em Forum of math, sigma}, 4, 2016.

\bibitem[Hel16b]{h_whitt}
David Helm.
\newblock Whittaker models and the integral {Bernstein} center for {$GL(n)$}.
\newblock {\em Duke Math. J.}, 165(9):1597--1628, 2016.

\bibitem[Hen93]{hen_converse}
Guy Henniart.
\newblock Caracterisation de la correspondance de langlands locale par les
  facteurs {$\epsilon$} de paires.
\newblock {\em Inventiones Mathematicae}, 1993.

\bibitem[HM15]{galois_gamma}
David Helm and Gilbert Moss.
\newblock {D}eligne--{L}anglands gamma factors in families.
\newblock {\em arXiv:1510.08743}, 2015.

\bibitem[HM18]{converse}
David Helm and Gilbert Moss.
\newblock Converse theorems and the local {L}anglands correspondence in
  families.
\newblock {\em {I}nvent. {M}ath.}, 214:999--1022, 2018.

\bibitem[HT01]{harris_taylor}
Michael Harris and Richard Taylor.
\newblock On the geometry and cohomology of some simple {S}himura varieties.
\newblock {\em Annals of Math. Studies}, 2001.

\bibitem[JL70]{jacquet_langlands}
Herve Jacquet and Robert Langlands.
\newblock {\em Automorphic Forms on {$GL(2)$}}.
\newblock Springer Lecture notes in Mathematics, 1970.

\bibitem[JL16]{jl}
Herv{\'{e}} Jacquet and Baiying Liu.
\newblock On the local converse theorem for {$p$}-adic {$GL_n$}.
\newblock {\em Amer. J. Math.}, 2016.

\bibitem[JPSS79]{jps1}
Herve Jacquet, Ilja~Iosifovitch Piatetski-Shapiro, and Joseph Shalika.
\newblock Automorphic forms on {$GL(3)$} {I}.
\newblock {\em The Annals of Mathematics}, 1979.

\bibitem[JPSS83]{jps2}
Herve Jacquet, Ilja~Iosifovitch Piatetski-Shapiro, and Joseph Shalika.
\newblock {Rankin-Selberg} convolutions.
\newblock {\em American Journal of Mathematics}, 1983.

\bibitem[KM17]{km}
Robert Kurinczuk and Nadir Matringe.
\newblock Rankin-selberg local factors modulo {$\ell$}.
\newblock {\em Selecta Math.}, 23:767--811, 2017.

\bibitem[KM18]{km_ll}
Robert Kurinczuk and Nadir Matringe.
\newblock The {$\ell$}-modular local {L}anglands correspondence and local
  factors.
\newblock {\em arXiv:1805.05888}, 2018.

\bibitem[LM19]{LM}
Baiying Liu and Gilbert Moss.
\newblock On the local converse theorem and descent theorem in families.
\newblock {\em Math. Z.}, 2019.
\newblock to appear (arXiv:1711.11159).

\bibitem[LRS93]{lrs}
G.~Laumon, M.~Rapoport, and U.~Stuhler.
\newblock {$D$}-elliptic sheaves and the {L}anglands correspondence.
\newblock {\em Inventiones Mathematicae}, 113:217--338, 1993.

\bibitem[M\'12]{minguez}
Alberto M\'{i}nguez.
\newblock Fonctions {Z\^{e}}ta {$\ell$}-modulaires.
\newblock {\em Nagoya Math. J.}, 208, 2012.

\bibitem[Mos16a]{moss2}
Gilbert Moss.
\newblock Gamma factors of pairs and a local converse theorem in families.
\newblock {\em Int Math Res Notices}, 2016(16):4903--4936, 2016.

\bibitem[Mos16b]{moss1}
Gilbert Moss.
\newblock Interpolating local constants in families.
\newblock {\em {M}ath {R}es. {L}ett.}, 23, 2016.

\bibitem[Nie14]{finite_field_jacquet}
Chufeng Nien.
\newblock A proof of the finite field analogue of {J}acquet’s conjecture.
\newblock {\em American Journal of Mathematics}, 136(3):653--674, 2014.

\bibitem[Pai14]{paige}
David Paige.
\newblock The projective envelope of a cuspidal representation of a finite
  linear group.
\newblock {\em Journal of number theory}, 136:354--374, 2014.

\bibitem[Vig96]{vig}
Marie-France Vigneras.
\newblock {\em Representations {$\ell$}-modulaires d'un groupe reductif
  p-adique avec {$\ell$} different de {$p$}}.
\newblock Boston: Birkhauser, 1996.

\bibitem[Vig00]{vig_epsilon}
Marie-France Vigneras.
\newblock Congruences modulo {$\ell$} between {$\epsilon$} factors for cuspidal
  representations of {$GL(2)$}.
\newblock {\em Journal de theorie des nombres de Bordeaux}, 2000.

\bibitem[Vig01]{vig_ll}
Marie-France Vign{\'{e}}ras.
\newblock Correspondance de {L}anglands semi-simple pour {$GL_n(F)$} modulo
  {$\ell\neq p$}.
\newblock {\em Inventiones mathematicae}, 144:177--223, 2001.

\end{thebibliography}
\bibliographystyle{alpha}
\end{document}